\newtheorem{thm}{Theorem}[section]
\newtheorem{cor}[thm]{Corollary}
\newtheorem{lem}[thm]{Lemma}
\newtheorem{conj}{Conjecture}
\theoremstyle{definition}
\newtheorem*{rem}{Remark}
\newtheorem*{exmpl}{Example}
\newtheorem{quest}{Question}
\newcommand{\tw}{{\rm{tw}}}
\newcommand{\stw}{{\rm{stw}}}
\newcommand{\dg}{{\rm{dg}}}
\newcommand{\pa}{{\rm{pa}}}
\title{Edge-intersection graphs of grid paths:\\the bend-number}
\author{Daniel Heldt, Kolja Knauer\thanks{Research was supported by the DFG as a GraDR EUROGIGA project.}, Torsten Ueckerdt\thanks{Research was supported by GraDR EUROGIGA project No. GIG/11/E023.}}
\begin{document}

\maketitle

\begin{abstract}
We investigate edge-intersection graphs of paths in the plane grid, regarding a parameter called the bend-number. I.e., every vertex is represented by a grid path and two vertices are adjacent if and only if the two grid paths share at least one grid-edge. The bend-number is the minimum~$k$ such that grid-paths with at most~$k$ bends each suffice to represent a given graph. This parameter is related to the interval-number and the track-number of a graph. We show that for every~$k$ there is a graph with bend-number~$k$. Moreover we provide new upper and lower bounds of the bend-number of graphs in terms of degeneracy, treewidth, edge clique covers and the maximum degree. Furthermore we give bounds on the bend-number of~$K_{m,n}$ and determine it exactly for some pairs of $m$ and $n$. Finally, we prove that recognizing single-bend graphs is NP-complete, providing the first such result in this field.
\end{abstract}

\section{Introduction}\label{sec:int}
Golumbic, Lipshteyn and Stern~\cite{Gol-09} introduced \emph{edge-intersection graphs of paths on a grid} (EPG graphs), a concept arising from VLSI grid layout problems~\cite{Bra-90}. A simple graph $G$ is an EPG graph, if there is an assignment of paths in the plane grid to the vertices, such that two vertices are adjacent if and only if the corresponding paths intersect in at least one grid-edge. The assignment is then called an EPG representation of $G$. EPG graphs generalize \emph{edge-intersection graphs of paths on degree 4 trees} as considered by Golumbic, Lipshteyn and Stern in~\cite{Gol-08}. In~\cite{Gol-09} it is shown that every graph is an EPG graph, however a certain parameter of EPG representations has awoken some interest. The \emph{bend-number} $b(G)$ of $G$ is the minimum $k$, such that $G$ has an EPG representation, with each path having at most $k$ bends. Here a \emph{bend of a grid path} is a switch in its direction between horizontal and vertical. Figure~\ref{fig:k35} shows an EPG representation of $K_{3,10}$ where each path has at most two bends, hence $b(K_{3,10})\leq 2$. Generally, a graph $G$ with $b(G)\leq k$ is referred to as a \emph{$k$-bend graph}.

\begin{figure}[htb] 
 \centering
 \includegraphics{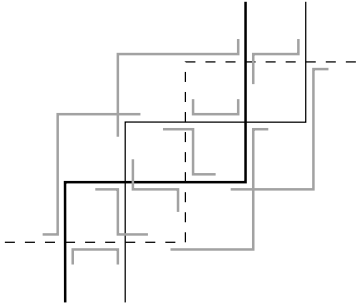}
 \caption{A 2-bend representation of $K_{3,10}$.}
 \label{fig:k35}
\end{figure}
%

\begin{rem}
 Most of the literature concerning this topic, including~\cite{Asi-09,Gol-09,Bie-10}, is considering $B_k$, the \emph{class of $k$-bend graphs}. Clearly $b(G) \leq k$ just paraphrases $G\in B_k$. However, we prefer to use $b(G)$ rather than $B_k$.
\end{rem}

\noindent Graphs with bend-number at most~1, called \emph{single-bend graphs}, already aroused interest in several respects, as seen in~\cite{Gol-09,Rie-09,Asi-12,Cam-12}. In~\cite{Asi-09,Bie-10} it has been shown that the bend-number of a graph can be arbitrarily large. Hence it is interesting to determine graphs or graph classes with bounded bend-number. Asinowski and Suk~\cite{Asi-09} give bounds on the bend-number of complete bipartite graphs. In~\cite{Hel-12} it is shown that $b(G)\leq 4$ for every planar $G$, $b(G)\leq 3$ for planar graphs with tree-width at most $3$ and that this is best-possible, and that $b(G)\leq 2$ for every $G$ with tree-width at most $2$, which includes outerplanar graphs and is best-possible due to an example of Biedl and Stern~\cite{Bie-10}. Biedl and Stern~\cite{Bie-10} also give upper bounds on $b(G)$ in terms of treewidth, pathwidth, degeneracy and maximum degree of $G$. 

\subsubsection*{Comparing parameters}\label{sec:Bs}

\emph{Interval graphs} are intersection graphs of intervals on the real line. Every vertex is associated with an interval, in such a way that two intervals overlap if and only if the corresponding vertices are adjacent. This subject has been extended to intersection graphs of systems of intervals in two ways:
\begin{itemize}
 \item In a \emph{$k$-interval representation} of a graph $G$ every vertex is associated with a set of at most $k$ intervals on the real line, such that vertices are adjacent iff any of their intervals intersect. The \emph{interval-number $i(G)$} is then defined as the minimum $k$, such that $G$ has a $k$-interval representation, see~\cite{Har-79}.
\item In a \emph{$k$-track representation} of a graph $G$ there are $k$ parallel lines, called \emph{tracks}. Every vertex is associated with one interval from each track. Again vertex adjacency is equivalent to interval intersection and the \emph{track-number $t(G)$} is the minimum $k$, such that $G$ has a $k$-track representation, see~\cite{Gya-95}. 
\end{itemize}
%

For emphasis we repeat:
\begin{itemize}
 \item A \emph{$k$-bend representation} is an EPG representation where each vertex is represented by a path with at most $k$ bends, i.e., at most $k+1$ segments. In this sense such a representation associates every vertex with at most $k+1$ intervals. The \emph{bend-number} $b(G)$ is the minimum $k$, such that $G$ has a $k$-bend representation.
\end{itemize}

Thus, the bend-number is yet another way to measure how far a graph is from being an interval-graph. Note that interval graphs are precisely the graphs with $i(G) = t(G) = b(G)+1 = 1$.

Now, $b(G)$ can be set in relation to $i(G)$ and $t(G)$: $b(G)$ is only a constant factor away from $i(G)$ and $t(G)$. Consecutive intervals representing a vertex in a $k$-interval representation may be connected by introducing three segments such that they form a grid-path. It is easy to see, that a $k$-track representation can be transformed into a $k$-interval representation, by putting the tracks on a single line, i.e., $i(G)\leq t(G)$. Thus one gets $b(G) \leq 4 (i(G)-1)\leq 4  (t(G)-1)$. On the other hand the grid-lines of a $k$-bend representation can be stringed together on a single line, i.e., $i(G) \leq b(G)+1$. We believe that no such bound exists for the track-number.

\begin{conj}\label{conj:tib}
There is no function $f$ such that $t(G)\leq f(b(G))$ for all $G$ and equivalently no such function exists replacing $b$ by $i$. We suspect that the line graph of $K_n$ is a good candidate for showing this, i.e., that this family has unbounded track-number, whereas in~\cite{Bie-10} it is shown that it has bend-number at most $2$.
\end{conj}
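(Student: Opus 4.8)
Since this is a conjecture, we only sketch a plan of attack. First, the two ``equivalently''-versions are interchangeable for free: we already have $i(G)\le b(G)+1$ and $b(G)\le 4(i(G)-1)$, so, after replacing a hypothetical bounding function $f$ by $x\mapsto\max_{j\le x}f(j)$ (which we may assume monotone), a bound $t(G)\le f(b(G))$ would yield $t(G)\le f(4(i(G)-1))$, and a bound $t(G)\le f(i(G))$ would yield $t(G)\le f(b(G)+1)$. Hence it suffices to exhibit one graph family with bounded bend-number and unbounded track-number. For the proposed family $G_n:=L(K_n)$ the inequality $b(G_n)\le 2$ --- equivalently $i(G_n)\le 3$ --- is due to Biedl and Stern~\cite{Bie-10}, so the whole problem reduces to proving that $t(L(K_n))\to\infty$.

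The convenient reformulation is that $t(G)\le k$ holds if and only if $E(G)$ is the union of the edge sets of $k$ interval graphs on the vertex set $V(G)$: in a $k$-track representation the $j$-th track induces such an interval graph $H_j$, and $E(H_1)\cup\dots\cup E(H_k)=E(G)$; conversely interval representations of the $H_j$ can be placed on $k$ parallel tracks. Since $L(K_{n-1})$ is an \emph{induced} subgraph of $L(K_n)$ and the track-number is monotone under induced subgraphs, it is enough to show that for every fixed $k$ the value $\varphi(k):=\sup\{\,n:\ t(L(K_n))\le k\,\}$ is finite; in other words, one wants a recursion bounding $\varphi(k)$ in terms of $\varphi(k-1)$.

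The natural attempt is as follows. Suppose $L(K_n)=H_1\cup\dots\cup H_k$ with each $H_i$ an interval graph, and fix an interval representation of $H_1$ on a line, assigning to each vertex $\{a,b\}$ of $L(K_n)$ (an unordered pair from $[n]$) an interval $I(\{a,b\})$; note that two disjoint pairs are non-adjacent in $L(K_n)$ and hence receive disjoint intervals. One now tries to find a large set $S\subseteq[n]$ on which $H_1$ induces \emph{no} edges, i.e., such that all pairs inside $S$ receive pairwise disjoint intervals on the first track. For such an $S$, every edge $\{\{a,b\},\{a,c\}\}$ of the line graph $L(K_S)$ is automatically covered by $H_2,\dots,H_k$, so $t(L(K_S))\le k-1$ and hence $|S|\le\varphi(k-1)$, which forces $n$ to be bounded. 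To produce such an $S$ one would try to combine Ramsey's theorem with the structure of interval graphs (e.g., independent sets of an interval graph are exactly the chains of a transitive orientation of its complement), or, alternatively, to exploit that $L(K_n)$ contains the independent set $\{e_1,\dots,e_{\lfloor n/2\rfloor}\}$ arising from a perfect matching of $K_n$, any two of whose members are joined by the canonical length-$2$ path avoiding the neighborhoods of all other $e_i$ --- so any three of the $e_i$ form an asteroidal triple --- together with the fact that interval graphs are asteroidal-triple-free.

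The main obstacle, and the reason the statement is so far only conjectured, is precisely the production of the set $S$ (or of whatever weaker substructure could still drive a recursion): the crude counting is hopelessly weak. A single interval $I(\{1,2\})$ on the first track only separates the pairs disjoint from $\{1,2\}$ into those lying entirely to its left and those lying entirely to its right, and a ``balanced bipartite''-type split shows that neither side need contain all pairs of any $S$ with $|S|\ge 3$; demanding that a cherry be left unhandled by the first track for this kind of reason alone amounts to asking $K_S$ to be properly $2$-edge-colorable, which forces $|S|\le 2$. So a genuinely new structural understanding of how an interval graph can sit as a subgraph of $L(K_n)$ seems to be required. Making the asteroidal-triple route quantitative --- showing that one interval graph can only ``handle'' the cherries touching a bounded number of the matching edges $e_i$ --- appears to be the most promising direction.
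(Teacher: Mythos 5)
There is no proof in the paper to compare against: the statement is Conjecture~\ref{conj:tib}, which the authors explicitly leave open, offering $L(K_n)$ only as a candidate family (its bend-number being at most $2$ by~\cite{Bie-10}, while its track-number is the unknown entry in Table~\ref{tab:results}). Your preliminary reductions are correct and match what the paper implicitly relies on: the equivalence of the $b$- and $i$-formulations follows from $i(G)\le b(G)+1$ and $b(G)\le 4(i(G)-1)$ exactly as you argue (after passing to a monotone majorant of $f$); the reformulation of $t(G)\le k$ as covering $E(G)$ by $k$ interval graphs on $V(G)$ is standard; induced-subgraph monotonicity of $t$ legitimately reduces everything to showing $\varphi(k)<\infty$; and the observation that any three edges of a perfect matching of $K_n$ form an asteroidal triple in $L(K_n)$ is sound, since the cherry $\{a,b\},\{a,c\},\{c,d\}$ avoids the closed neighborhood of every other matching edge.

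The genuine gap is that the entire content of the conjecture --- the unboundedness of $t(L(K_n))$ --- is not established, as you candidly acknowledge. The recursion you set up requires, for every interval graph $H_1$ spanning part of $L(K_n)$, a set $S\subseteq[n]$ with $|S|>\varphi(k-1)$ such that no pair inside $S$ is an edge of $H_1$ on the first track, and no argument (Ramsey-type, AT-based, or otherwise) is given that such an $S$ of unbounded size must exist; your own discussion of why crude counting fails shows the step is not routine. It is also worth noting that the truth of the conjecture is not self-evident, so one cannot even be confident that the recursion has a chance of closing. In short: what you have written is a reasonable research plan consistent with the authors' suggestion, not a proof, and the statement remains exactly as open as the paper leaves it.
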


\begin{table}[htb]
 \centering
 \begin{tabular}{l|cr|cr|cr}
  & \multicolumn{2}{c|}{$i(G)$} & \multicolumn{2}{c|}{$t(G)$} & \multicolumn{2}{c}{$b(G)$} \\
 \hline
forest & $2$ & \cite{Har-79} & $2$ & \cite{Deo-94} & $1$ & \cite{Gol-09} \\
outerplanar & $2$ & \cite{Sch-83} & $2$ & \cite{Kos-99} & $2$ & \cite{Hel-12} \\
planar & 3 & \cite{Sch-83} & 4 & \cite{Gon-07}  & $3\leq . \leq 4$ & \cite{Hel-12} \\
\hfill + bipartite & 3 & \cite{Sch-83} & 4 & \cite{Gon-07,Gon-09} & 3 & \cite{Bie-10} \\
line graph & 2 & &$?$ & Conj.~\ref{conj:tib} & $2$ & \cite{Bie-10} \\
$\dg(G)\leq k$ & $k+1$ & \cite{Kna-12} & $2k$ & \cite{Alo-92,Hak-96,Kna-12}  & $2k-1$ & Sec.~\ref{sec:acy} \\
$\tw(G)\leq k$ & $k+1$ & \cite{Din-98,Kna-12} & $k+1$ & \cite{Din-98,Kna-12} & $2k-2$ & Sec.~\ref{sec:tw} \\
degree $\leq\Delta$ & $\lceil \frac{\Delta+1}{2} \rceil$ & \cite{Gri-80} & $\leq \frac{3\Delta+6}{5}$ & \cite{Gul-86} & $\lceil \frac{\Delta}{2} \rceil\leq . \leq \Delta$ & Cor.~\ref{cor:chi'}\\
\end{tabular}
\caption{Some graph classes and their maximum interval-number, track-number and bend-number. Here $\dg(G)$ and $\tw(G)$ denotes the degeneracy and treewidth of $G$, respectively.}
\label{tab:results}
\end{table}

Many extremal questions about interval-numbers and track-numbers have been studied. In Table~\ref{tab:results} we have listed some considered graph classes and the maximum $i(G)$, $t(G)$ and $b(G)+1$ among all $G$ in this class. In the last two columns (corresponding to the track-number and the bend-number) some values remain unknown, yielding several interesting problems to attack.

\subsubsection*{Our Results}

\begin{itemize}
 \item In Section~\ref{sec:cli} we bound the bend-number of a graph in terms of its global and local clique covering number. This generalizes the bound for line graphs from~\cite{Bie-10} and improves it for line graphs of bipartite graphs. As a corollary we obtain that $b(G) \leq \Delta+1$ where $\Delta$ denotes the maximum degree of $G$, which improves the previous bound of $2\lceil \frac{\Delta+1}{2}\rceil+1$ from~\cite{Bie-10}.
 \item In Section~\ref{sec:bip} we present two lower bounds on the bend-number of the complete bipartite graph $K_{m,n}$. From the first we obtain that $b(K_{n,n}) = \lceil \frac{n}{2} \rceil$, which in particular proves that for every $k$ there is a $k$-bend graph that is not a $(k-1)$-bend graph. This confirms a conjecture of~\cite{Gol-09} and has been shown for even $k$ in~\cite{Bie-10}. With our second lower bound we improve the bound from~\cite{Bie-10} on the minimal $n$ for which $b(K_{m,n}) = 2m-2$. Moreover we show that this new bound is almost tight, disproving a conjecture of Biedl and Stern~\cite{Bie-10}.
 \item In Section~\ref{sec:acy} we prove that $b(G) \leq 2\dg(G)-1$ for all graphs $G$, where $\dg(G)$ denotes the degeneracy of $G$. This was suspected in~\cite{Bie-10}, where the authors prove a bound of $2\dg(G)+1$. We additionally show that our new bound is best-possible even for bipartite graphs.
 \item In Section~\ref{sec:tw} we prove that $b(G) \leq 2\tw(G)-2$ for all graphs $G$, where $\tw(G)$ denotes the treewidth of $G$, generalizing the known $b(K_{m,n}) \leq 2m-2$~\cite{Bie-10}. This improves a result of Biedl and Stern~\cite{Bie-10} who achieve the same bound, but with pathwidth instead of treewidth. Our bound is best-possible.
 \item In Section~\ref{sec:rec} we present the first hardness result in the field of EPG graphs. In particular we prove that recognizing single-bend graphs is NP-complete, answering a question that has been frequently asked~\cite{Gol-09,Bie-10}. The recognition of $k$-bend graphs for $k \geq 2$ remains open.
\end{itemize}

\section{Preliminaries}\label{sec:pre}

We consider simple undirected graphs $G$ with vertex set $V(G)$ and edge set $E(G)$. An EPG representation is a set of finite paths $\{P(v) \mid v \in V(G)\}$, which consist of consecutive edges of the rectangular grid in the plane, such that $\{v,w\} \in E(G)$ if and only if $P(v) \cap P(w)$ contains a grid-edge. In particular two paths representing \emph{non}-adjacent vertices may intersect in grid-points. A bend of $P(v)$ is a point of $P(v)$, where a horizontal grid-edge of $P(v)$ is followed by a vertical grid-edge of $P(v)$.

The set of grid-edges between two consecutive bends or the first (last) bend and the start (end) of $P(v)$ is called a \emph{segment}. So a $k$-bend path consists of $k+1$ segments, each of which is either horizontal or vertical. A \emph{subsegment} is a connected subset of a segment. Two horizontal subsegments in an EPG representation \emph{see each other} if there is a vertical grid-line intersecting both subsegments. Similarly, two vertical subsegments see each other if there is a horizontal grid-line intersecting both. We say that a subsegment $s \subset P(v)$ \emph{displays $v$} if the grid-edges on $s$ are exclusively in $P(v)$ and in no other path. Similarly, a segment $s \subset P(v)\cap P(w)$ \emph{displays the edge $\{v,w\}$} if every grid-edge of $s$ is only contained in $P(u)\cap P(v)$ and not element of any other path.

Two special types of paths appear more frequently in the paper: A path $P$ is a \emph{staircase} if going along $P$ we see alternating left turns and right turns, and a path $P$ with an even number of bends is called a \emph{snake} if we see alternating two left turns and two right turns.

For a set $S$ of horizontal (respectively vertical) subsegments in an EPG representation that pairwise see each other we say that a grid path $P$ \emph{connects $S$} if $P$ is a snake and every horizontal (respectively vertical) segment of $P$ is contained in a different subsegment from $S$. More formally, there are two vertical (respectively horizontal) grid-lines $\ell_1$ and $\ell_2$ that intersect all subsegments in $S$ and contain all vertical (respectively horizontal) segments of $P$, while each vertical segment of $P$ is completely contained in a different subsegment from $S$. A grid path that connects $S$ has exactly $2|S|-2$ bends.



\section{Edge Clique Covers}\label{sec:cli}
In this section we present a general method to represent any graph with a number of bends depending on an edge clique cover. More precisely, let $\mathcal{C}$ be the graph class of cliques and their disjoint unions. A collection $C_1,\ldots,C_{\ell}$ of members of $\mathcal{C}$ is an \emph{edge clique cover} of $G$ if each $C_i$ can be mapped into $G$ by an injective homomorphism, so that each edge of $G$ is in the image of at least one $C_i$. Denote by $cl_g(G)$ the \emph{global clique covering number}, i.e., the minimum number of elements of $\mathcal{C}$ needed for an edge clique  cover of $G$. Moreover denote by $cl_{\ell}(G)$ the \emph{local clique covering number}, i.e., the minimum over all edge clique covers of $G$ of the maximum number of members containing the same vertex of $G$ in their image. One easily sees $cl_{\ell}(G)\leq cl_g(G)$. 

\begin{thm}\label{thm:cli}
We have $b(G)\leq cl_g(G)-1$.
\end{thm}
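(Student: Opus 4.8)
The plan is to construct an EPG representation of $G$ directly from an edge clique cover $C_1,\dots,C_{\ell}$ with $\ell = cl_g(G)$, assigning to each element of $\mathcal{C}$ one ``horizontal band'' of the grid and arranging the bands one above the other. Since each $C_i$ is a disjoint union of cliques, a single clique on vertex set $\{u_1,\dots,u_r\}$ can be realized inside a band by $r$ long horizontal segments all passing through two fixed vertical grid-lines $\ell_1^{(i)}, \ell_2^{(i)}$ that are dedicated to $C_i$, so that every pair $P(u_a), P(u_b)$ shares the grid-edge(s) between $\ell_1^{(i)}$ and $\ell_2^{(i)}$; disjoint cliques of the same $C_i$ get disjoint horizontal extents within the band, so they don't interfere. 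This is exactly the ``connecting a set of subsegments'' device from Section~\ref{sec:pre}. I would first handle this single-band picture carefully, reserving for each vertex $v$ a distinct horizontal grid-line (its ``home line''), so that in band $i$ the portion of $P(v)$ lives on $v$'s home line if $v$ appears in the image of $C_i$, and is absent otherwise.

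The second step is to splice the bands together into one path per vertex. Order the bands $1,\dots,\ell$ from bottom to top. For a fixed vertex $v$, let $i_1 < i_2 < \dots < i_t$ be the indices of the clique-cover members whose image contains $v$; note $t \le cl_\ell(G) \le \ell$. Inside band $i_j$ the path $P(v)$ uses one horizontal segment (spanning the vertical lines dedicated to the clique of $C_{i_j}$ containing $v$); between consecutive bands $i_j$ and $i_{j+1}$ I connect these horizontal segments by a vertical ``riser'' placed far to the right (or left) of everybody's active horizontal extent, at a vertical grid-line reserved for $v$, so that risers of different vertices never share a grid-edge and never overlap the clique gadgets. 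Each transition from a horizontal segment up to a riser and back down to the next horizontal segment costs $2$ bends, and with $t-1$ transitions plus $t$ horizontal segments the path $P(v)$ has $2(t-1)$ bends. The key point is that we can do better at the two ends: the first horizontal segment need not be preceded by a riser and the last need not be followed by one, and moreover the very first band a vertex occupies can be entered, and the very last exited, without a bend — so a vertex occupying $t$ bands needs only $2(t-1)$ bends, and since $t \le \ell = cl_g(G)$ this already gives $b(G) \le 2(cl_g(G)-1)$. To get the claimed $cl_g(G)-1$ one sharpens the gadget: instead of routing every vertex through a ``detour riser'' on the side, route the risers directly between the two dedicated vertical lines $\ell_1^{(i)},\ell_2^{(i)}$ of band $i$, i.e., make $P(v)$ a single staircase/snake that goes up through all its bands monotonically, contributing only one bend per band after the first. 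A careful such layout makes $P(v)$ a path with exactly $t-1 \le cl_g(G)-1$ bends while still pairwise sharing, in band $i$, a grid-edge with exactly the other vertices in the same clique of $C_i$.

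The correctness argument then has two halves. For \emph{adjacency}: if $\{v,w\}\in E(G)$, some $C_i$ contains this edge, so $v,w$ lie in a common clique of $C_i$, hence $P(v),P(w)$ both contain the reserved grid-edge of that clique in band $i$ and are adjacent. For \emph{non-adjacency}: if $\{v,w\}\notin E(G)$, then in every band $i$ either one of them is absent, or they lie in \emph{different} cliques of $C_i$ and hence in horizontally disjoint regions of the band; and the risers/vertical parts are on per-vertex-reserved grid-lines, so no shared grid-edge is ever created — they may cross only at isolated grid-points, which is allowed. The main obstacle I anticipate is bookkeeping the geometry so that the vertical portions of the paths really are edge-disjoint from everything else while still letting each $P(v)$ be a genuine path (connected, no repeated edges) with the tight bend count; the honest way to manage this is to make every $P(v)$ monotone (weakly increasing in the vertical coordinate as we traverse it), use a private vertical grid-line per vertex for the parts of the risers that leave the clique gadgets, and choose the horizontal extents of the clique gadgets in each band to be nested/disjoint according to the disjoint-union structure of $C_i$. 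Once the monotone layout is fixed, counting bends is immediate: a monotone staircase-like path through $t$ bands has $t-1$ bends, giving $b(G) \le cl_g(G) - 1$.
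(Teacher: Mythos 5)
There are two genuine gaps here, both of which would sink the construction as written. First, the clique gadget does not actually create the required edge-intersections: if every vertex $v$ gets its own ``home line'' and its band-$i$ portion is a horizontal segment on that line, then two vertices in the same clique of $C_i$ have horizontal segments on \emph{distinct parallel} grid-lines, and such segments never share a grid-edge, no matter that they both span the interval between $\ell_1^{(i)}$ and $\ell_2^{(i)}$. To realize a clique by overlap of horizontal segments, all vertices of that clique must be placed on a \emph{common} grid-line in band $i$; the grid-line has to be reserved per clique (as in the paper), not per vertex. Second, the bend count $t-1$ is unattainable in the layout you describe. If every band contributes a horizontal segment, then any two consecutive such segments lie on different horizontal lines and must be joined by at least one vertical segment, so the path has at least $2t-1$ segments and $2t-2$ bends --- your ``one bend per band after the first'' sharpening cannot be realized with risers, monotone or not. (This should also be suspicious for another reason: your argument, if valid, would prove $b(G)\leq cl_{\ell}(G)-1$, which is strictly stronger than both Theorem~\ref{thm:cli} and Theorem~\ref{thm:cli_max_deg}; the paper only obtains $2cl_{\ell}(G)-2$ from the riser-style construction, which is exactly the $2(t-1)$ count you get in your first pass.)

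The fix forces you into the paper's construction: alternate the orientation of the bands, so that $C_i$ is realized by horizontal lines for odd $i$ and vertical lines for even $i$ (one line per clique of $C_i$, shared by all its vertices), and let the $i$-th and $(i+1)$-th segments of $P(v)$ meet directly at a bend whose two coordinates are determined by the clique of $C_i$ and the clique of $C_{i+1}$ containing $v$. For this alternation to work one pads each $C_i$ with singleton cliques so that every vertex occurs in every $C_i$; then every path is a staircase with exactly $cl_g(G)$ segments and $cl_g(G)-1$ bends, and offsetting the blocks of grid-lines for distinct $C_i$ keeps segments from different bands from interfering. Your correctness argument for non-adjacency (different cliques of $C_i$ get different lines, crossings at grid-points are harmless) is fine once the gadget is repaired.
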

\begin{proof}
Let $C_i$ for $1\leq i\leq k=cl_g(G)$ denote the members of the edge clique cover, extended by 1-cliques, such that each $C_i$ covers all vertices. We use staircases with $k-1$ bends to represent the vertices of $G$. Every $C_i$ consists of vertex disjoint cliques $S_i(j)$ covering all vertices for $1\leq j\leq \ell_{i}$. We associate with every vertex $v$ the vector $x_v$ with $x_v(i)=j$ iff $v\in S_i(j)$. We set $\ell_{-1}=0$ and $\ell_{0}=1$ and $x_v(0)=0$. The path $P(v)$ is defined by the coordinates of its start point, bends and end point $(p_1,p_2,\ldots,p_{k},p_{k+1})$. Define $p_i:=(x_v(i)+i-1+\sum_j^{(i-1)/2}\ell_{2j-1},x_v(i-1))$ if $i$ is odd and $(x_v(i-1),x_v(i)+i-1+\sum_j^{(i-2)/2}\ell_{2j})$ if $i$ is even.

Along the $i$-th segment, two paths run through the same grid-line if and only if the corresponding vertices are in the same clique in $C_i$. See Figure~\ref{fig:chromaticclique} for an illustration.

\begin{figure}[htb]
 \centering
 \psfrag{1}[cc][cc]{$1$}
 \psfrag{2}[cc][cc]{$2$}
 \psfrag{3}[cc][cc]{$3$}
 \psfrag{4}[cc][cc]{$4$}
 \psfrag{5}[cc][cc]{$5$}
 \psfrag{u}[cc][cc]{$P(u)$}
 \psfrag{v}[cc][cc]{$P(v)$}
 \includegraphics{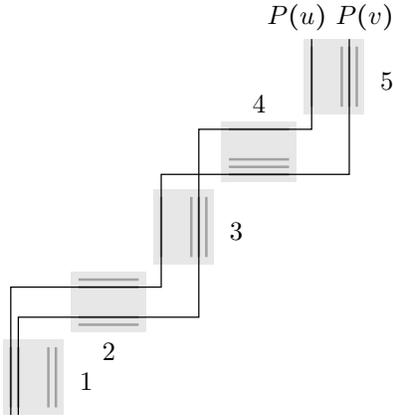}
 \caption{A $(k-1)$-bend representation based on an edge clique cover with $k$ unions of cliques: The grey blocks correspond to the members of the clique cover. Every clique in $C_i$ is assigned a grid-line within the $i$-th block. Paths are inserted as demonstrated by $P(u)$ and $P(v)$ according to the cliques they are in.}
 \label{fig:chromaticclique}
\end{figure}
\end{proof}

\begin{exmpl}As illustrated in Figure~\ref{fig:trianglegrid}, every induced subgraph of the triangular plane grid has global clique covering number at most $3$. We conclude that
 the bend-number of the triangular plane grid and all of its induced subgraphs is at most 2.
\end{exmpl}

\begin{figure}[htb] 
 \centering
 \includegraphics{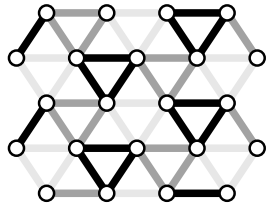}
 \caption{A subgraph $G$ of the triangular plane grid covered by $3$ unions of cliques. Hence $b(G) \leq 2$.}
 \label{fig:trianglegrid}
\end{figure}

Line graphs of bipartite graphs can be covered by two unions of cliques corresponding to the stars of the vertices of the first and the second bipartition class, respectively.

\begin{cor}
 Line graphs of bipartite graphs are single-bend graphs.
\end{cor}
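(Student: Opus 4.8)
The plan is to derive this directly from Theorem~\ref{thm:cli} by showing that the line graph of any bipartite graph has global clique covering number at most~$2$; this is precisely the bundling of star-cliques indicated just above. Fix a bipartite graph $H$ with $V(H) = A \cup B$ and set $G = L(H)$, so $V(G) = E(H)$ and two edges of $H$ are adjacent in $G$ if and only if they share an endpoint.

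First I would build the two members of the cover. For $u \in V(H)$ let $S_u \subseteq V(G)$ be the set of edges of $H$ incident to $u$; any two of them share the endpoint $u$, so $S_u$ induces a clique in $G$. Set $C_A := \bigcup_{a \in A} S_a$ and $C_B := \bigcup_{b \in B} S_b$, each regarded as the disjoint union of the cliques it is assembled from. The key point is that this union really is disjoint: an edge $e = ab$ of $H$ has exactly one endpoint in $A$, so $e \in S_u$ with $u \in A$ forces $u = a$; hence the cliques $\{S_a\}_{a\in A}$ are pairwise vertex-disjoint in $G$, and likewise the $\{S_b\}_{b\in B}$. Thus $C_A, C_B \in \mathcal{C}$, each realised in $G$ by the identity map on $E(H)$, which is an injective homomorphism on every such disjoint union of star-cliques.

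It then remains to check that $C_A$ and $C_B$ together cover every edge of $G$: an edge $\{e,f\}$ of $G$ comes from two distinct edges of $H$ with a common (hence unique) endpoint $u$, and $\{e,f\}$ lies in the image of the clique $S_u$, which is part of $C_A$ when $u \in A$ and part of $C_B$ when $u \in B$. Therefore $cl_g(G) \le 2$, and Theorem~\ref{thm:cli} yields $b(G) \le cl_g(G) - 1 \le 1$, i.e., $G$ is a single-bend graph. I expect no genuine obstacle: the substantive content is entirely in Theorem~\ref{thm:cli}, and the only thing needing a moment's care is the pairwise disjointness of the star-cliques within one bipartition class, which is immediate from bipartiteness — this is exactly what keeps the cover size at $2$ rather than $|V(H)|$.
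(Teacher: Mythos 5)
Your proof is correct and is exactly the paper's argument: the paper covers the line graph by the two disjoint unions of star-cliques coming from the two bipartition classes and applies Theorem~\ref{thm:cli}, which is precisely what you do (with the disjointness and coverage checks made explicit).
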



A proper edge coloring of $G$ is a special case of an edge clique cover of $G$ as in Theorem~\ref{thm:cli}. Hence the bend-number $b(G)$ is bounded by the edge chromatic number $\chi'(G)$. This implies a strengthening of a result of~\cite{Bie-10} concerning the maximal degree $\Delta$ of $G$:

\begin{cor}\label{cor:chi'}
If $\chi'(G)$ denotes the edge chromatic number of $G$, then $b(G)\leq\chi'(G)-1$. In particular Vizing's Theorem~\cite{Viz-64} yields $b(G)\leq\Delta$ for $G$ with maximum degree $\Delta$.
\end{cor}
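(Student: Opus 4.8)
The plan is to observe that a proper edge coloring is nothing but a special edge clique cover, and then invoke Theorem~\ref{thm:cli}. Concretely, I would fix an optimal proper edge coloring of $G$ using $\chi'(G)$ colors. For each color $i$ let $M_i$ be the set of edges receiving color $i$; by properness $M_i$ is a matching, hence the subgraph of $G$ formed by $M_i$ together with all vertices of $G$ (isolated ones included) is a disjoint union of edges and single vertices — that is, a disjoint union of cliques, so a member of the graph class $\mathcal{C}$. The identity map embeds this $C_i$ into $G$ by an injective homomorphism, and since the $M_i$ partition $E(G)$, every edge of $G$ lies in the image of some $C_i$. Thus $C_1,\dots,C_{\chi'(G)}$ is an edge clique cover of $G$, which gives $cl_g(G)\le\chi'(G)$.

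Feeding this into Theorem~\ref{thm:cli} yields $b(G)\le cl_g(G)-1\le\chi'(G)-1$, which is the first claim. For the second claim, Vizing's Theorem~\cite{Viz-64} states that $\chi'(G)\le\Delta+1$ whenever $G$ has maximum degree $\Delta$; substituting gives $b(G)\le\chi'(G)-1\le\Delta$.

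There is essentially no obstacle here: the only point to check is that matchings fit the definition of members of $\mathcal{C}$ (which they do, being disjoint unions of $K_2$'s and isolated vertices), and that $1$-cliques may be added freely so that each $C_i$ covers all vertices — exactly the padding step already used in the proof of Theorem~\ref{thm:cli}. Hence the corollary follows immediately from Theorem~\ref{thm:cli} combined with Vizing's Theorem.
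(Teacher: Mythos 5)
Your proof is correct and follows exactly the paper's route: the paper also observes that a proper edge coloring is a special case of an edge clique cover (each color class being a matching, i.e.\ a disjoint union of cliques) and then applies Theorem~\ref{thm:cli} together with Vizing's Theorem. Your write-up just makes explicit the padding by $1$-cliques and the bound $cl_g(G)\le\chi'(G)$ that the paper leaves implicit.
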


\begin{quest}\label{quest:Delta}
 What is the maximum bend-number of a graph with maximum degree $\Delta$? By Theorem~\ref{thm:kmm} and Corollary~\ref{cor:chi'} it is between $\lceil \frac{\Delta}{2} \rceil$ and $\Delta$.
\end{quest}

We now provide a bound of $b(G)$ in terms of the local clique covering number.
\begin{thm}\label{thm:cli_max_deg}
We have $b(G) \leq 2cl_{\ell}(G)-2$.
\end{thm}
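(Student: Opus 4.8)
The plan is to refine the staircase construction of Theorem~\ref{thm:cli} so that the number of bends is governed by the \emph{local} rather than the global clique covering number. Let $k = cl_{\ell}(G)$ and fix an edge clique cover $C_1, \dots, C_\ell$ of $G$ achieving the local optimum, so that every vertex $v$ lies in the image of at most $k$ of the members. For each vertex $v$ let $I(v) \subseteq \{1, \dots, \ell\}$ be the index set of those members whose image contains $v$; thus $|I(v)| \leq k$. The key change from Theorem~\ref{thm:cli} is that a vertex should only use segments for the members it actually participates in: instead of threading $P(v)$ through all $\ell$ blocks, we route it through only the $|I(v)| \le k$ relevant blocks, producing a staircase with at most $k-1$ bends \emph{as far as its own turns are concerned}. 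However, to keep two staircases that share a clique in $C_i$ actually overlapping in a grid-\emph{edge} on the $i$-th block, their $i$-th segments must lie on a common grid-line and must be traversed at compatible ``heights''. Reconciling the different block-index sets $I(v)$ of different vertices while maintaining alignment forces extra bends, and bounding this blow-up by a factor of $2$ is the crux.

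The clean way to organize the bookkeeping is to work block by block in the fixed order $1, 2, \dots, \ell$, exactly as in Figure~\ref{fig:chromaticclique}, but let each path enter a block from the left (horizontal segment) or from the top (vertical segment) depending on the parity of how many of the earlier blocks it has visited, and ``skip'' a block $i \notin I(v)$ by inserting a single horizontal or vertical pass-through segment on a private grid-line reserved for non-participants of $C_i$. A path visiting $t \le k$ blocks then has its ``active'' segments interleaved with skip-segments; a skip between two active segments of the same orientation costs two bends (out and back), and this is where the factor $2$ enters. A more careful accounting — assigning to each active block one segment and charging skips to the transitions — shows that the total number of bends of $P(v)$ is at most $2|I(v)| - 2 \le 2k-2$: think of $P(v)$ as a snake whose $|I(v)|$ ``useful'' segments each lie in the appropriate block, connected up through the intervening skip-lines, which is precisely the ``connecting a set of subsegments'' construction from Section~\ref{sec:pre} with $|I(v)|$ subsegments, hence $2|I(v)|-2$ bends.

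The steps, in order, would be: (1) set up the edge clique cover realizing $cl_\ell(G) = k$ and the index sets $I(v)$, padding with $1$-cliques only within a block if needed, never globally; (2) allocate for each block $C_i$ a horizontal strip and a vertical strip of grid-lines — one line per clique of $C_i$ for the participants, plus one extra ``transit'' line for vertices with $i \notin I(v)$; (3) describe $P(v)$ explicitly as a monotone staircase/snake that, in block $i$, occupies the grid-line of its clique in $C_i$ when $i \in I(v)$ and the transit line otherwise, with the orientation of the $i$-th segment determined by the parity of $|I(v) \cap \{1,\dots,i\}|$ (or a similar rule) so that consecutive active segments alternate horizontal/vertical; (4) verify adjacency: $\{u,v\} \in E(G)$ iff some $C_i$ contains a clique with both $u$ and $v$, which happens iff their $i$-th segments share a grid-line \emph{and}, by monotonicity of the staircase across blocks, overlap in at least one grid-edge there — and that non-adjacent pairs meet at most in grid-points; (5) count bends and conclude $b(G) \le \max_v (2|I(v)|-2) \le 2k-2$.

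The main obstacle I anticipate is step (4) combined with the bend count in step (3): one must choose the heights/offsets at which each path crosses each block so that (a) two paths sharing a clique in $C_i$ genuinely overlap in a grid-edge — not merely touch at a corner — on that block, (b) two paths \emph{not} sharing any clique never overlap in a grid-edge on any block, including the transit lines, and (c) each individual path still turns only $2|I(v)|-2$ times despite having to dodge around all the blocks it skips. Getting all three simultaneously is delicate: the transit lines are shared by many non-participating vertices, so the representation must ensure those vertices' segments on a transit line are pairwise disjoint (or that any forced overlap there is already accounted for by a common clique), which is the reason a strict left-to-right, bottom-to-top monotonicity of every staircase is essential — it prevents spurious edge-overlaps and keeps the per-path turn count tight.
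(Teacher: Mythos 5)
Your guiding idea --- route each path only through the (at most $k$) cliques that actually contain the vertex, realized as a snake with $2k-2$ bends --- is the right one, and your closing remark that $P(v)$ should simply be the ``connecting a set of subsegments'' construction from Section~\ref{sec:pre} is essentially the paper's proof. But the implementation you build around it has genuine gaps. First, in step (3) you let the orientation of the $i$-th segment of $P(v)$ depend on the parity of $|I(v)\cap\{1,\dots,i\}|$; this parity differs from vertex to vertex, so two vertices in the same clique of $C_i$ may meet that block with perpendicular segments and then intersect only in a grid-point, not a grid-edge --- adjacency is lost. Second, a single shared ``transit line'' per skipped block is fatal: two non-adjacent vertices that both skip block $i$ and both must traverse it to reach later blocks would overlap on that line in a grid-edge. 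Third, the bend count collapses: in a sequential block layout every skipped block between two active ones still forces a pass-through segment, so ``out and back'' costs bends per skipped block, not per gap, and a vertex lying in few cliques scattered among many members would again need on the order of $\ell$ bends rather than $2|I(v)|-2$. The appeal to the ``connecting'' construction cannot rescue this, because that construction requires all target subsegments to be parallel and to pairwise see each other, which fails when the blocks alternate orientation along a diagonal as in Figure~\ref{fig:chromaticclique}.

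The paper avoids all three problems by discarding the block-by-block layout entirely: reserve one \emph{vertical} grid-line segment per clique of the cover (one per clique, not per member), all parallel and all pairwise seeing each other, and let $P(v)$ be a snake connecting exactly the at most $k$ vertical segments of the cliques containing $v$, with its horizontal segments placed on grid-lines private to $v$. There are no transit lines --- a path simply never visits a clique it does not belong to --- so spurious intersections cannot arise, any two paths sharing a clique overlap in a grid-edge on that clique's vertical line, and the bend count is exactly $2|I(v)|-2\le 2k-2$ by the connecting construction. Replacing your steps (2)--(4) by this single uniform layout makes the argument go through.
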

\begin{proof}
Consider an edge clique cover $C$ of $G$ such that each vertex is contained in at most $k$ members of $C$. Reserve parallel segments of vertical grid-lines, one for each clique in $C$, such that all these segments see each other. We represent every vertex $v$ by a path $P(v)$ connecting the vertical segments corresponding to the $k$ cliques containing $v$. The horizontal segments of $P(v)$ are such that no horizontal segments of paths in the representation intersect. See Figure~\ref{fig:degreeclique} for an illustration.
\begin{figure}[htb] 
 \psfrag{u}[cc][cc]{$P(v)$}
 \psfrag{v}[cc][cc]{$P(u)$}
 \centering
 \includegraphics{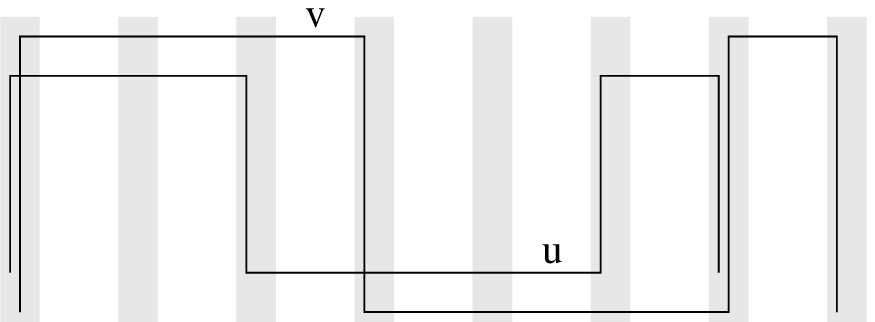}
 \caption{A $(2cl_{\ell}(G)-2)$-bend representation: Each vertical grey part is reserved for a member of $C$. Paths are inserted as demonstrated by $P(u)$ and $P(v)$ according to the cliques they are in.}
 \label{fig:degreeclique}
\end{figure}
\end{proof}

Line graphs have local clique covering number at most $2$, while the backwards direction does not hold, e.g., $K_5-e$. Hence Theorem~\ref{thm:cli_max_deg} generalizes a result of \cite{Bie-10}, stating that line graphs have bend-number at most 2. 
%

Line graphs are claw-free and in~\cite{Bie-10} it was asked whether also claw-free graphs have bend-number at most $2$. We feel that this is not the case.

\begin{conj}\label{conj:claw}
 There are claw-free graphs with arbitrary large bend-number.
\end{conj}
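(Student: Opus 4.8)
The plan is to establish the formally stronger claim that \emph{co-bipartite} graphs---complements of bipartite graphs---already have unbounded bend-number. The first step is the elementary observation that every co-bipartite graph is claw-free: if $X$ and $Y$ are its two sides, then $X$ and $Y$ are cliques, so any three vertices contain two lying in a common side and hence joined by an edge; thus such a graph has independence number at most $2$, and in particular no induced claw. Equivalently, for a bipartite graph $H$ on parts $X,Y$ let $G(H)$ be obtained from $H$ by turning $X$ and $Y$ each into a clique; then $G(H)$ is co-bipartite, contains $H$ as an induced subgraph, and it suffices to find bipartite graphs $H$ with $b(G(H))\to\infty$.

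Here one must be careful to take $H$ \emph{sparse}: if $H$ is dense the side-completion trivialises it---e.g.\ $G(K_{m,n})=K_{m+n}$, which has bend-number $0$---so the dense hard instances $K_{m,n}$ of Section~\ref{sec:bip}, and likewise the treewidth-extremal examples of Section~\ref{sec:tw}, cannot be fed in directly. Instead the strategy would be to choose $H$ to be a sparse but geometrically complicated bipartite graph---an incidence graph of a projective plane, a bipartite expander, a large subdivided grid, or a bipartite subdivision of one of the degeneracy-extremal graphs of Section~\ref{sec:acy}---and then to run an $H$-based lower-bound argument inside an arbitrary EPG representation of $G(H)$: the clique $X$ appears as a family of paths pairwise sharing a grid-edge, likewise $Y$, and every edge $xy$ of $H$ still forces $P(x)$ and $P(y)$ to share a grid-edge reaching across between the regions occupied by $X$ and by $Y$. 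One would hope that the feature that makes such $H$ hard---many vertices on one side forced to meet many fixed paths on the other side in pairwise incomparable ways---still forces $\Omega(k)$ bends on some path, the intuition being that the extra clique edges do not help to realise all these distinct crossing patterns at once.

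The hard part is exactly whether this hope holds, and the obstacle is substantial: bend-number is \emph{not} monotone under edge addition, so the clique edges inside $X$ and inside $Y$ may permit representations of $G(H)$ that are far more efficient than any representation of $H$. Every lower bound presently available for $b(\cdot)$, including those of this paper, exploits bipartite-type sparsity, and none of these arguments survives completing the two colour classes to cliques; a successful attack would need a genuinely new lower-bound technique---perhaps a charging argument over the segments of a representation that isolates the contribution of the cross edges of $H$---that remains valid when clique edges are present within each colour class. A concrete and already non-trivial test case is the cocktail-party graph $K_{n\times 2}$, the complement of a perfect matching: it is co-bipartite, and $b(K_{n\times 2})\le cl_g(K_{n\times 2})-1=O(\log n)$ by Theorem~\ref{thm:cli} together with a binary-encoding edge clique cover, but no constant upper bound is known. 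If its bend-number turns out to be unbounded this alone proves Conjecture~\ref{conj:claw}, while if it is bounded one learns that the required construction must be more delicate than a mere complement of a perfect-matching-like graph.
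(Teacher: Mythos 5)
The statement you are asked about is Conjecture~\ref{conj:claw}; the paper offers no proof of it and explicitly lists it among the open problems in Section~\ref{sec:ope}, so there is no argument of the authors to compare yours against. Your reduction step is correct and clean: a co-bipartite graph has independence number at most $2$ and hence is claw-free, so it would indeed suffice to exhibit bipartite graphs $H$ with $b(G(H))\to\infty$ after completing both sides to cliques. Your side remark on the cocktail-party graph also checks out: taking, for each of $\lceil\log n\rceil+1$ binary vectors $\chi\in\{0,1\}^n$ (the binary encodings of $1,\dots,n$ together with the all-zeros vector), the two disjoint transversal cliques $\{u_j^{(\chi(j))}\}$ and $\{u_j^{(1-\chi(j))}\}$ gives an edge clique cover in the sense of Section~\ref{sec:cli}, whence $b(K_{n\times 2})=O(\log n)$ by Theorem~\ref{thm:cli}.

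Nevertheless this is not a proof, and you acknowledge the gap yourself: the entire lower bound is missing. Every candidate $H$ you name (incidence graphs of projective planes, expanders, subdivided grids) is offered only with the \emph{hope} that some charging argument forces $\Omega(k)$ bends, and you correctly identify why no existing argument closes this: bend-number is not monotone under edge addition, and the paper's lower-bound machinery --- the counting of grid-lines and bend-points in Lemma~\ref{lem:low}, the crossing/endpoint association in Lemma~\ref{lem:c-inequality}, and the crossing bound of Lemma~\ref{lem:pnt} --- all exploit the fact that in $K_{m,n}$ a grid-edge is shared by at most two paths and that the paths for $B$ are pairwise non-adjacent. Once $X$ and $Y$ are cliques, many paths may share long common segments, the ``one edge per rightmost grid-edge'' and ``each crossing charged at most twice'' steps break down, and no substitute is supplied. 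So the proposal is a reasonable research programme consistent with the paper's stated belief, but it establishes nothing; the conjecture remains open both in the paper and in your write-up.
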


\section{Complete bipartite graphs}\label{sec:bip}

In this section we consider the bend-number of complete bipartite graphs. The interval-number of $K_{m,n}$ has been determined independently by Harary and Trotter~\cite{Har-79} and Griggs and West~\cite{Gri-80}, its track-number was determined by Gy{\'a}rf{\'a}s and West~\cite{Gya-95}. Both values are given by simple closed formulas, namely $i(K_{m,n}) = \lceil \frac{mn+1}{m+n} \rceil$ and $t(K_{m,n}) = \lceil \frac{mn}{m+n-1} \rceil$. In contrast, a closed formula for the bend-number of complete bipartite graphs seems hard to obtain, as the results in this section illustrate.

It was shown by Biedl and Stern~\cite{Bie-10} that $b(K_{m,n}) \leq 2m-2$ for all $n$ and that this bound is attained when $n > 4m^4 - 8m^3 + 2m^2 + 2m$. Other than that the only bound on $b(K_{m,n})$ known before is $b(K_{m,n}) \leq \lceil\frac{\operatorname{max}\{m,n\}}{2}\rceil$, due to Asinowski and Suk~\cite{Asi-09}.

We present here two non-trivial lower bounds on $b(K_{m,n})$: The Lower-Bound-Lemma~I (Lemma~\ref{lem:low}) gives, depending on $n$, lower bounds on $b(K_{m,n})$ ranging from $\frac{m}{2}$ to $m-1$. It in particular implies that the construction of Asinowski and Suk~\cite{Asi-09} is tight in the case $m=n$, i.e., $b(K_{m,m}) = \lceil \frac{m}{2} \rceil$. This especially confirms a conjecture~\cite{Gol-09,Bie-10} stating that for every $k$ there is a graph $G$ with $b(G) = k$. Moreover, we present in Theorem~\ref{thm:kmm3} a new construction asserting that the Lower-Bound-Lemma~I is also tight at the other end of its range, i.e., $b(K_{m,n}) = m-1$ for all $n$ roughly between $m^2$ and $m^3/4$. Interestingly, we obtain that the maximal $n$ for which $b(K_{m,n}) \leq k$ jumps from $n \in \theta(m^2)$ to $n \in \theta(m^3)$ when going from $k=m-2$ to $k=m-1$.

Secondly, the Lower-Bound-Lemma~II (Lemma~\ref{lem:lower-bound-2}) gives, depending on $n$, lower bounds on $b(K_{m,n})$ ranging from $m-1$ to $2m-2$. It improves the bound on the minimal $n$ for which $b(K_{m,n}) = 2m-2$ to $m^4-2m^3+5m^2-4m+1$ (c.f. Theorem~\ref{thm:kmm4}). With another construction in Theorem~\ref{thm:m4} we show that our new bound is nearly tight, i.e., we show that $b(K_{m,n}) \leq 2m-3$ for $n \leq m^4 -2m^3 + \frac{5}{2}m^2 -2m -4$, which disproves a conjecture of Biedl and Stern~\cite{Bie-10}, who suspected that this can be achieved only for $n \in \mathcal{O}(m^2)$. The Lower-Bound-Lemma~II is almost tight at the lower, as well as at the upper end of its range.

To summarize, we determine in this section $b(K_{m,n})$ for several pairs of $m,n$ and present two lower bounds for the general case. Our results are illustrated in Figure~\ref{fig:general-Kmn}, where we sketch a region that contains the graph of $b(K_{m,n})$ seen as a function of $n$ for fixed $m$.

\begin{figure}[htb]
 \centering
 \psfrag{x1}[rc][rc]{$\frac{1}{2}m$}
 \psfrag{x2}[rc][rc]{$m-1$}
 \psfrag{x3}[rc][rc]{$\frac{3}{2}m$}
 \psfrag{x4}[rc][rc]{$2m-3$}
 \psfrag{x5}[rc][rc]{$2m-2$}
 \psfrag{y1}[rc][lc][1][25]{$m$}
 \psfrag{y2}[rc][lc][1][25]{$(m-1)^2$}
 \psfrag{y3}[rc][lc][1][25]{$\frac{1}{4}m^3-\frac{1}{2}m^2-m+4$}
 \psfrag{y4}[rc][lc][1][25]{$\frac{1}{4}m^3+\frac{1}{2}m^2+3m$}
 \psfrag{y5}[rc][lc][1][25]{$m^4-2m^3+\frac{5}{2}m^2-2m-4$}
 \psfrag{y6}[rc][lc][1][25]{$m^4-2m^3+5m^2-4m+1$}
 \psfrag{U1}[rc][rc]{Thm.~\ref{thm:kmm3}}
 \psfrag{U2}[rc][rc]{Thm.~\ref{thm:m4}}
 \psfrag{L1}[lc][lc]{Thm.~\ref{thm:kmm}}
 \psfrag{L2}[lc][lc]{Lower-Bound-Lemma I}
 \psfrag{L3}[lc][lc]{Lower-Bound-Lemma II}
 \psfrag{L4}[lc][lc]{Thm.~\ref{thm:kmm4}}
 \includegraphics{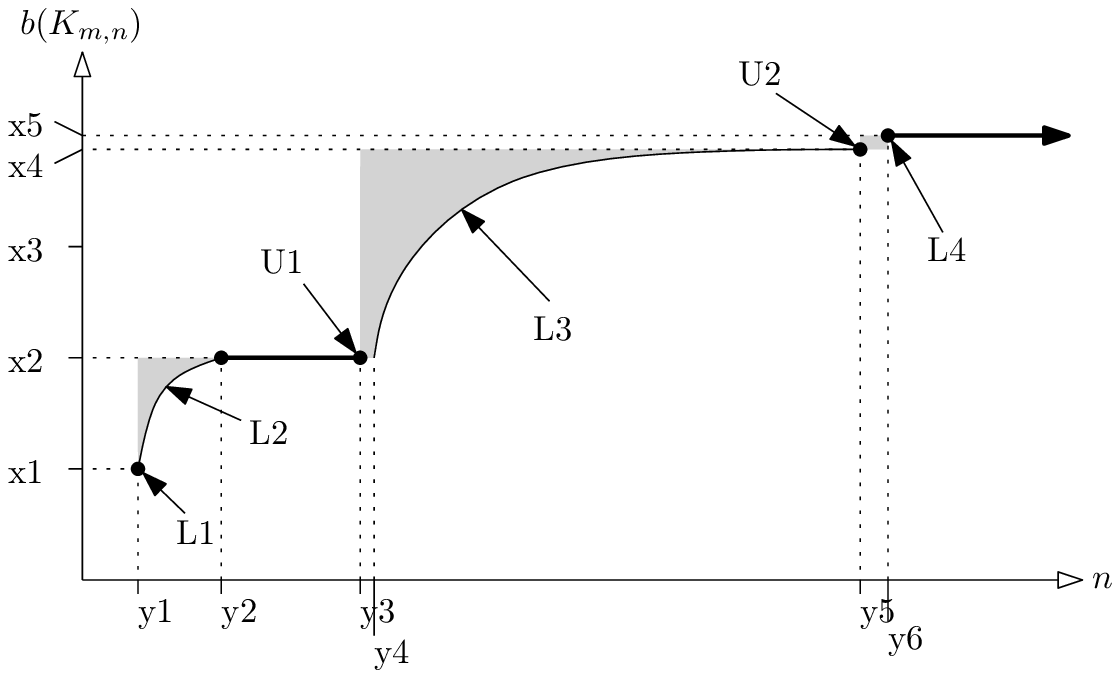}
 \caption{Illustration of the lower and upper bounds on $b(K_{m,n})$ presented in this section. The grey region contains the graph of the function $n \mapsto b(K_{m,n})$ for fixed $m$. Filled circles and solid lines correspond to exact values.}
 \label{fig:general-Kmn}
\end{figure}

The bend-number of $K_{2,n}$ has been determined for all $n$ in~\cite{Asi-09}: $b(K_{2,n})=2$ iff $n\geq 5$, $b(K_{2,n})=1$ iff $2\leq n\leq 4$ and $b(K_{2,n})=0$ iff $n\leq 1$. Throughout this paper we denote the bipartition classes of $K_{m,n}$ by $A=\{a_1,\ldots,a_m\}$ and $B=\{b_1,\ldots,b_n\}$ and will always assume $3\leq m\leq n$.

We start with the first lower bound on the bend-number of $K_{m,n}$. In the lemma below, as well as in the second lower bound in Lemma~\ref{lem:lower-bound-2}, we present an inequality with variables $m$, $n$ and $k$ that is valid for every $k$-bend representation of $K_{m,n}$. Fixing $m$ and $n$, we obtain a lower bound on $k = b(K_{m,n})$; Fixing $m$ and $k$ we obtain an upper bound on the maximum $n$ such that $b(K_{m,n}) \leq k$.

\begin{lem}[\textbf{Lower-Bound-Lemma~I}]\label{lem:low}
For every $k$-bend representation of $K_{m,n}$ we have $$(k+1)(m+n) \geq mn + \sqrt{2k(m+n)}.$$
\end{lem}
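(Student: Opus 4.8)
The plan is to fix a $k$-bend representation of $K_{m,n}$ and count, in two different ways, the grid-edges that are "used to realize adjacencies", i.e.\ the grid-edges lying in the intersection of an $A$-path and a $B$-path. Since $K_{m,n}$ has $mn$ edges and each edge $\{a_i,b_j\}$ must be displayed by at least one common grid-edge of $P(a_i)$ and $P(b_j)$, we may pick for every edge a witnessing grid-edge; this gives an injection from $E(K_{m,n})$ into the set of grid-edges, so there are at least $mn$ relevant grid-edges. The first, easy bound comes from the observation that each path has at most $k+1$ segments, and along a single segment $s$ of $P(a_i)$ the witnessing grid-edges for the edges $\{a_i, b_j\}$ handled on $s$ all lie on one grid-line; the $B$-paths passing through that grid-line within $s$ must each also spend a segment there. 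Summing the number of segments over all $m+n$ paths gives $(k+1)(m+n)$ segments total, and a clean double-counting argument — essentially "each witnessing grid-edge is charged to one $A$-segment and one $B$-segment, and a given pair of crossing segments can only witness the single edge between their owners" — yields $(k+1)(m+n) \ge 2\sqrt{mn}\cdot(\text{something})$ or, more directly, the crude part $(k+1)(m+n) \ge mn$ of the claimed inequality after noticing that the number of (A-segment, B-segment) incidences that produce an edge is at most the number of A-segments times... — no: the sharper route is needed.

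So the real work is the additive term $\sqrt{2k(m+n)}$. Here I would look more carefully at collinear segments. Partition the grid-edges carrying adjacencies by the grid-line they lie on. On a fixed horizontal grid-line $\ell$, the portions of $A$-paths on $\ell$ are subsegments, as are the portions of $B$-paths; an edge $\{a_i,b_j\}$ is witnessed on $\ell$ only if a subsegment of $P(a_i)$ on $\ell$ overlaps (shares a grid-edge with) a subsegment of $P(b_j)$ on $\ell$. Now comes the key geometric point: two subsegments on the same line either are disjoint, or overlap, and the overlap structure of intervals on a line is very restricted — in particular, if $p$ subsegments (from $A$-paths) and $q$ subsegments (from $B$-paths) lie on $\ell$, the number of $A$–$B$ pairs that overlap is at most $p q$ but, crucially, the total number of grid-edges needed to display all those overlapping pairs forces the subsegments to be long, and long segments are "expensive" in the global budget $(k+1)(m+n)$. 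I expect the argument to extract, from a line carrying $t$ distinct adjacencies, a contribution of order $\sqrt{t}$ extra segments (because realizing $t$ crossings among intervals on a line with each crossing needing its own grid-edge forces roughly $\sqrt{t}$ of the intervals to be present, or forces total length $\gtrsim \sqrt{t}$), and then summing $\sqrt{t_\ell}$ over all lines $\ell$ and using Cauchy–Schwarz with $\sum_\ell t_\ell \ge mn$ against a bounded number of "charged lines" to produce the $\sqrt{2k(m+n)}$ term.

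Concretely, the steps I would carry out, in order: (1) set up the injection $E(K_{m,n}) \hookrightarrow \{\text{witnessing grid-edges}\}$ and classify each witnessing grid-edge by its grid-line; (2) prove the crude inequality $(k+1)(m+n) \ge mn$ by charging each witnessing grid-edge to the segment of the $A$-path containing it and to the segment of the $B$-path containing it, and observing a segment of $P(a_i)$ together with a segment of $P(b_j)$ can witness only the one edge $\{a_i,b_j\}$ (so the number of segment-pairs that witness an edge is exactly the number of edges witnessed, i.e.\ $\ge mn$, while the number of segment pairs is at most (\#$A$-segments)$\cdot$(nothing)) — more precisely bound \#edges by (\#$A$-segments among rows) times (structure), getting $mn \le (k+1)(m+n)\cdot\tfrac12$ type estimates and refining; (3) isolate the "savings": show that a path that passes through a grid-line shared with many other paths must, to display several edges there, contribute more segments or longer segments than the crude count assumes, quantify this as an extra $\ge \sqrt{2k(m+n)}$ across the whole representation via a convexity/Cauchy–Schwarz step; (4) assemble. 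The main obstacle is unmistakably step (3): turning the vague "many adjacencies on one line force geometric bulk" intuition into a precise inequality with the exact constant $\sqrt 2$ and the exact linear form $k(m+n)$ inside the root. I would attack it by a careful interval-overlap lemma on a single line — roughly, if subsegments display $t$ distinct $A$–$B$ adjacencies on one line then the line is crossed by the vertical lines bounding those subsegments enough times to cost $\Omega(\sqrt t)$ segments in the perpendicular direction — and I expect the bend bound $k$ on every path to enter precisely because it caps how many times any single path can re-enter such a line.
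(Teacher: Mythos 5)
There is a genuine gap, and it sits exactly where you flagged it: step (3). Your plan for the additive term $\sqrt{2k(m+n)}$ — extract $\Omega(\sqrt{t_\ell})$ of ``savings'' from each grid-line carrying $t_\ell$ adjacencies and sum via Cauchy--Schwarz against $\sum_\ell t_\ell \ge mn$ — points in the wrong direction. That route would naturally produce a quantity governed by $mn$ (the number of edges), whereas the term you need has $k(m+n)$ (the total number of \emph{bends}) under the root; nothing in your sketch explains how the bend budget, rather than the adjacency count, ends up inside the square root. The paper's mechanism is different and more elementary: after assuming w.l.o.g.\ that every path has exactly $k$ bends, there are $k(m+n)$ bends, each grid-point carries at most $4$ of them, so at least roughly $k(m+n)/2$ distinct grid-points are bend-points; every bend-point is a crossing of a horizontal and a vertical grid-line from the set $\mathcal{L}$ of used lines, so $\ell_h\ell_v \ge |\mathcal{G}|$ and by AM--GM $|\mathcal{L}| = \ell_h+\ell_v \ge 2\sqrt{|\mathcal{G}|} \ge \sqrt{2k(m+n)}$ (up to a correction for points carrying $\ge 3$ bends, which is absorbed at the end). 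The ``savings'' are then exactly \emph{one unit per grid-line}, not $\sqrt{t}$ per line: in the paper's assignment scheme each vertex assigns itself, for each of its $k+1$ segments, to the edge witnessed at that segment's rightmost/topmost grid-edge, and at the extremal grid-edge of each line in $\mathcal{L}$ the assignment is either wasted or doubled. This yields $(k+1)(m+n)\ge mn+|\mathcal{L}|$ in one stroke, subsuming your crude step (2) as well.

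On that crude step: your charging argument as written does not quite close either. Charging each witnessing grid-edge to the $A$-segment and $B$-segment containing it does not bound the number of edges by the number of segments, because a single segment of $P(a_i)$ can witness arbitrarily many edges $\{a_i,b_j\}$; you need something like the paper's device of looking only at the \emph{extremal} grid-edge of each segment (together with the observation, which you implicitly use, that in the bipartite setting a grid-edge lies in at most one $A$-path and one $B$-path) to get an assignment in which every edge is hit at least once while every vertex is charged at most $k+1$ times. So both halves of the proposal need repair: the crude count needs the extremal-grid-edge trick to become a valid double count, and the $\sqrt{2k(m+n)}$ term needs to be rederived from the bend/grid-line count rather than from per-line adjacency density.
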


\begin{proof}
 Fix a $k$-bend representation of $K_{m,n}$ and assume w.lo.g. that every path has exactly $k$ bends. Let $\mathcal{L}$ be the set of all grid-lines that support any segment of any path. Further, let $\mathcal{G}$ be the set of grid-points that support a bend of any path in the representation, and $\mathcal{G}_3 \subseteq \mathcal{G}$ be those grid-points that support at least $3$ bends.

 Now, look at the rightmost or topmost grid-edge of each of the $k+1$ segments of every path $P(v)$. If this grid-edge is shared by another path $P(w)$ (there can be only one, because the graph is bipartite), we assign $v$ to the edge $\{v,w\}$ in the graph. This way
 \begin{itemize}
  \item every vertex is assigned to at most $k+1$ edges,
  \item every edge is assigned at least once, 
  \item at the rightmost (topmost) grid-edge of every line in $\mathcal{L}$ either no assignment is done or an edge is assigned twice, and
  \item at the left and bottom grid-edge of every point in $\mathcal{G}_3$ either no assignment is done or an edge is assigned twice.
 \end{itemize}
 Hence we have $(k+1)|V(K_{m,n})| \geq |E(K_{m,n})| + |\mathcal{L}| + 2|\mathcal{G}_3|$, i.e.,
 \begin{equation}
  (k+1)(m+n) \geq mn + |\mathcal{L}| + 2|\mathcal{G}_3|.\label{eqn:L-up}
 \end{equation}
 Since every path has exactly $k$ bends and every grid-point supports at most $4$ of them, we get 
 $|\mathcal{G}| \geq \frac{k(m+n) - 4|\mathcal{G}_3|}{2} + |\mathcal{G}_3| = \frac{k(m+n)}{2}-|\mathcal{G}_3|$. Now if we have $\ell_h$ horizontal and $\ell_v$ vertical lines in $\mathcal{L}$, and these lines cross in at least $|\mathcal{G}|$ grid-points, then $|\mathcal{L}|=\ell_h+\ell_v$ with $\ell_h\cdot\ell_v\geq |\mathcal{G}|$. The sum is minimized by $\ell_h=\ell_v=\sqrt{|\mathcal{G}|}$. Hence, 
 \begin{equation}
  |\mathcal{L}|\geq 2\sqrt{|\mathcal{G}|} \geq \sqrt{2k(m+n) - 4|\mathcal{G}_3|} \geq \sqrt{2k(m+n)} - 2\sqrt{|\mathcal{G}_3|}.\label{eqn:L-low}
 \end{equation}
 Putting~\eqref{eqn:L-up} and~\eqref{eqn:L-low} together we obtain $$(k+1)(m+n) \geq mn + \sqrt{2k(m+n)} - 2\sqrt{|\mathcal{G}_3|} + 2|\mathcal{G}_3| \geq mn + \sqrt{2k(m+n)},$$
 which completes the proof.
\end{proof}

The Lower-Bound-Lemma~I is meaningful only for $k \leq m-2$. Indeed if $k \geq m-1$, then the inequality in the Lower-Bound-Lemma~I is valid for all $n$. The reason is that for $k \geq m-1$ the paths for $B$ could have each edge-intersection on a different segment. However, the Lower-Bound-Lemma~I gives non-trivial lower bounds on $b(K_{m,n})$ ranging from $\frac{m}{2}$ to $m-1$ depending on $n$. These bounds turn out to be best-possible at both ends of this range, i.e., we can determine $b(K_{m,n})$ exactly for some particular pairs of $m$ and $n$ (c.f. Theorem~\ref{thm:kmm} and Theorem~\ref{thm:kmm3}).

\begin{thm}\label{thm:kmm}
For all $m\geq 3$ we have $b(K_{m,m})=\lceil\frac{m}{2}\rceil$.
\end{thm}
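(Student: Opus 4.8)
The plan is to prove $b(K_{m,m})=\lceil m/2\rceil$ by establishing matching upper and lower bounds. The upper bound $b(K_{m,m})\leq\lceil m/2\rceil$ is exactly the construction of Asinowski and Suk~\cite{Asi-09} already cited in the text, so I would simply invoke it (or, if a self-contained argument is wanted, sketch the staircase construction that realizes $K_{m,m}$ with $\lceil m/2\rceil$ bends: split $A$ into $\lceil m/2\rceil$ pairs, use one ``block'' of grid-lines per pair, and route each $b_j$ as a staircase picking up the right vertical line in every block while each $a_i$ occupies a single line in its block). The real work is the lower bound $b(K_{m,m})\geq\lceil m/2\rceil$.

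For the lower bound I would feed $n=m$ into the Lower-Bound-Lemma~I (Lemma~\ref{lem:low}). With $m=n$ the inequality reads
\[
(k+1)\cdot 2m \;\geq\; m^2 + \sqrt{2k\cdot 2m} \;=\; m^2 + 2\sqrt{km}.
\]
Dropping the (nonnegative) square-root term already gives $2m(k+1)\geq m^2$, hence $k+1\geq m/2$, i.e. $k\geq m/2-1$. This is not quite enough — it only yields $k\geq\lceil m/2\rceil-1$ — so the square-root term has to be used to push the bound up by one. The key step is therefore an arithmetic argument: assume for contradiction that $k=\lceil m/2\rceil-1$, so that $k+1=\lceil m/2\rceil$, and derive $2m\lceil m/2\rceil < m^2 + 2\sqrt{km}$, i.e. $2m\lceil m/2\rceil - m^2 < 2\sqrt{km}$. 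When $m$ is even, $2m\lceil m/2\rceil-m^2=0$ but then $k=m/2-1\geq 1$ for $m\geq 4$, so $2\sqrt{km}>0$ is consistent — meaning Lemma~\ref{lem:low} alone does \emph{not} give the even case, and indeed the paper only claims the even case was known before via~\cite{Bie-10}. So I would split into two cases.

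Case $m$ odd: here $\lceil m/2\rceil=(m+1)/2$, so $2m\lceil m/2\rceil-m^2 = m(m+1)-m^2 = m$, and with $k=(m-1)/2$ we need to contradict $m < 2\sqrt{km} = 2\sqrt{m(m-1)/2}=\sqrt{2m(m-1)}$, i.e. $m^2 < 2m(m-1)=2m^2-2m$, i.e. $2m<m^2$, which holds for all $m\geq 3$. So for odd $m\geq 3$ the assumption $k=(m-1)/2$ is impossible and $b(K_{m,m})\geq(m+1)/2=\lceil m/2\rceil$. Case $m$ even: Lemma~\ref{lem:low} only yields $k\geq m/2-1$, so I would need a separate argument for the even case — either cite the parity-of-$k$ result of Biedl and Stern~\cite{Bie-10} (which settles $b(K_{m,m})=m/2$ for even $m$ since $K_{m,m}$ is not a $(k-1)$-bend graph when $k=m/2$), or observe that the full Lower-Bound-Lemma~I with the $2|\mathcal{G}_3|$ term retained might already do it.

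The main obstacle I anticipate is precisely this even case: the square-root slack in Lemma~\ref{lem:low} is exactly $0$ at $k=m/2-1$ when $m$ is even, so the lemma as stated is a hair too weak, and one must either reach back into its proof for a sharper estimate (e.g. noting $|\mathcal{L}|$ must be a strict overcount because the extremal configuration $\ell_h=\ell_v=\sqrt{|\mathcal{G}|}$ with every rightmost/topmost edge doubly-assigned is not simultaneously achievable) or invoke the external parity result. Given the phrasing in the introduction (``has been shown for even $k$ in~\cite{Bie-10}''), I expect the intended proof simply combines Lemma~\ref{lem:low} for odd $m$ with the cited even-$k$ result, plus the Asinowski–Suk upper bound, and I would write it that way.
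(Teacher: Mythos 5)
Your overall route is the same as the paper's: the Asinowski--Suk construction gives the upper bound, and the Lower-Bound-Lemma~I with $n=m$ gives the lower bound. Your odd case is carried out correctly. However, your analysis of the even case contains a genuine error, and the conclusion you draw from it is exactly backwards. To contradict the existence of a $k$-bend representation you must show that the lemma's inequality \emph{fails}, i.e., that $(k+1)\cdot 2m < m^2 + 2\sqrt{km}$. For even $m$ and $k=\lceil m/2\rceil-1=m/2-1$ the left side equals $m^2$, so the required strict inequality reduces to $0<2\sqrt{km}$, which holds for every even $m\geq 4$ because then $k=m/2-1\geq 1$. Thus the observation that $2\sqrt{km}>0$ is not ``consistent'' with the lemma --- it is precisely the violation of the lemma's conclusion, hence the desired contradiction. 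The even case is therefore the \emph{easier} of the two, and the detour through the Biedl--Stern parity result (which, as cited in this paper, asserts only the existence of \emph{some} graph of each even bend-number, not a statement about $K_{m,m}$) is both unnecessary and not obviously sufficient in the form you invoke it.

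For comparison, the paper handles both parities in one stroke by substituting $k=\frac{m-1}{2}\geq\lceil\frac{m}{2}\rceil-1$ into the lemma and rearranging to $(m-\sqrt{2m(m-1)})/2m\geq 0$, which is false for all $m\geq 3$. If you simply delete your claimed obstruction and run the same computation you did for odd $m$ in the even case as well (where it degenerates to $0<2\sqrt{km}$), your proof becomes correct and essentially identical to the paper's.
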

\begin{proof}
As mentioned above in~\cite{Asi-09} it is shown that $b(K_{m,n}) \leq \lceil \max(m,n)/2 \rceil$. For equality we will prove that $K_{m,m}$ cannot be represented with less than $\lceil\frac{m}{2}\rceil$ bends. We use the Lower-Bound-Lemma~I with $m=n$ and $k=\frac{m-1}{2}\geq\lceil\frac{m}{2}\rceil-1$. Bringing everything on the left-hand-side we obtain $$(m-\sqrt{2m(m-1)})/2m\geq 0,$$ which is a contradiction for $m\geq 3$.
\end{proof}

With Theorem~\ref{thm:kmm} we can confirm a conjecture of~\cite{Gol-09}. 
\begin{cor}
 For every $k \geq 0$ there is a graph $G$ with $b(G) = k$.
\end{cor}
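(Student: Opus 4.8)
The plan is to read the corollary off Theorem~\ref{thm:kmm} directly, handling the small cases $k=0$ and $k=1$ separately since $K_{m,m}$ is only covered for $m\ge 3$. For $k\ge 2$ I would simply invoke Theorem~\ref{thm:kmm} with $m=2k$: since $2k\ge 3$, the theorem applies and gives $b(K_{2k,2k})=\lceil \tfrac{2k}{2}\rceil = k$. (The choice $m=2k-1$ works equally well, as $\lceil\tfrac{2k-1}{2}\rceil=k$.) So the family $\{K_{2k,2k}\}_{k\ge 2}$ already realizes every value $k\ge 2$.

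For $k=0$ I would take $G=K_1$ (or a single edge): it is an interval graph, and as recalled in the introduction the graphs with $b(G)=0$ are exactly the interval graphs, so $b(K_1)=0$. For $k=1$ I would take $G=C_4=K_{2,2}$: being chordless it is not an interval graph, hence $b(C_4)\ge 1$, while the known value $b(K_{2,n})=1$ for $2\le n\le 4$ (recalled above from~\cite{Asi-09}) gives $b(C_4)=1$; alternatively one writes down a single-bend representation of $C_4$ by hand. Combining the three cases — $K_1$ for $k=0$, $C_4$ for $k=1$, and $K_{2k,2k}$ for $k\ge 2$ — yields a graph of bend-number exactly $k$ for every $k\ge 0$.

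There is essentially no obstacle in this step: all the substance is in Theorem~\ref{thm:kmm} (and, before it, in the Lower-Bound-Lemma~I together with the Asinowski--Suk upper bound). The only point requiring a little care is the hypothesis $m\ge 3$ in Theorem~\ref{thm:kmm}, which is exactly why $k=0$ and $k=1$ must be treated by hand rather than via $K_{m,m}$. I would therefore phrase the proof as the short three-case argument above rather than trying to force a uniform construction.
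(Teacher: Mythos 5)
Your proposal is correct and matches the paper's (implicit) argument: the corollary is stated as an immediate consequence of Theorem~\ref{thm:kmm}, obtained by choosing $m$ with $\lceil m/2\rceil=k$, and your careful handling of the small cases $k=0$ (interval graph) and $k=1$ ($K_{2,2}$, using the recalled values of $b(K_{2,n})$) fills in exactly the cases not covered by the hypothesis $m\ge 3$. Nothing is missing.
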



From the Lower-Bound-Lemma~I we get that the bend-number of $K_{m,n}$ is at least $m-1$ if $n \geq (m-1)^2$. Next we show that this bound is tight, that is, we can find a $(m-1)$-bend representation of $K_{m,n}$ even for $n \approx \frac{m^3}{4}$ (c.f. Theorem~\ref{thm:kmm3}).

Our construction (as well as the one in Theorem~\ref{thm:m4}) is based on the fact that two $(2j-1)$-bend paths can cross in $\ell(\ell+1)$ points as indicated in Figure~\ref{fig:cross}. In Lemma~\ref{lem:pnt} we will show that this is best-possible. Formally, $P_1$ may be described as starting at $(0,0)$ with a horizontal segment to the right, followed by a downwards segment, and then alternate left and right and up and down. The length of the $i$-th horizontal segment is $2j+3-2i$ and of the $i$-th vertical segment is $2i$. The path $P_2$ is obtained by rotating $P_1$ by $180$ degrees and translating the starting point to $(2j+2,-1)$. We call the pair $(P_1,P_2)$ a \emph{pretzel}.

\begin{figure}[htb]
 \centering
 \psfrag{5}[bc][bc]{$j=5$}
 \psfrag{6}[bc][bc]{$j=6$}
 \psfrag{7}[bc][bc]{$j=7$}
 \includegraphics{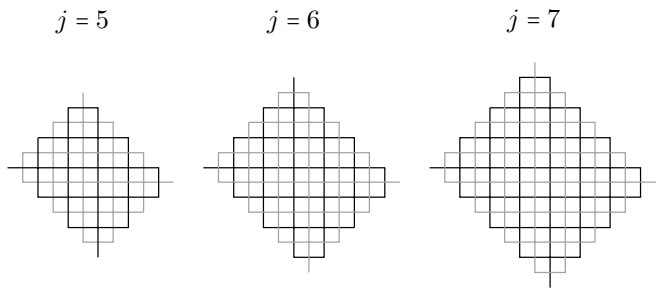}
 \caption{Pretzels: Two $(2j-1)$-bend paths can cross in $j(j+1)$ points.}
 \label{fig:cross}
\end{figure}

For the next two constructions we ``blow up'' the pretzels in Figure~\ref{fig:cross}: We split the vertices in $A$ evenly into $A_1=(a_1^1, \ldots, a^1_{\lfloor\frac{m}{2}\rfloor})$ and $A_2=(a_1^2, \ldots, a^2_{\lceil\frac{m}{2}\rceil})$. Then, for fixed $j$ we construct a set of $m$ $(2j-1)$-bend paths such that every path representing a vertex from $A_i$ looks like $P_i$ just with changed segment-lengths ($i=1,2$). More precisely, $P(a_1^1)$ has starting point $(1,1)$, its $i$-th horizontal segment is of length $\lfloor\frac{m}{2}\rfloor(m-i)+\lceil\frac{m}{2}\rceil(m-i+1)$ and its $i$-th vertical segment is of length $\lfloor\frac{m}{2}\rfloor(i-1)+\lceil\frac{m}{2}\rceil(i+1)$. 
For $h = 2,\ldots,\lfloor\frac{m}{2}\rfloor$, $P(a_h^1)$ has starting point $(h,h)$, its $i$-th horizontal segment is $2$ units shorter than the $i$-th horizontal segment of $P(a_{h-1}^1)$ and its $i$-th vertical segment is $2$ units longer than the $i$-th vertical segment of $P(a_{h-1}^1)$. In the end, we enlarge all starting segments such that $P(a_h^1)$ has starting point $(1,h)$ ($h = 1,\ldots,\lfloor\frac{m}{2}\rfloor$) and all ending segments such that all endpoints lie on one line.

The paths representing $A_2$ arise from $P_2$ in an analogous way by interchanging floor and ceiling functions in the length formulas. Their starting points are at $(\lfloor\frac{m}{2}\rfloor(m-1)+\lceil\frac{m}{2}\rceil m+1,1-h)$ ($h = 1,\ldots,\lceil\frac{m}{2}\rceil$). The blown-up pretzel for $m=6$ and $j=3$ is depicted in Figure~\ref{fig:multicross}.

Crossings in a blown-up pretzel are either formed by paths from the same $A_i$ (type~$1_i$) or from $A_1$ and $A_2$ (type~$2$). Type~$1_i$ crossings form \emph{blocks} of size $|A_i|\times |A_i|$ and the type~$2$ crossings come in \emph{blocks} of size $|A_1|\times |A_2|$. Some blocks of type~$1_i$ are located around one bend of the corresponding paths. These blocks have triangular instead of quadrangular shape. All blocks form a checkerboard pattern. In Figure~\ref{fig:multicross} the type~$2$ blocks are highlighted by grey boxes.

We group the blocks in a blown-up pretzel into four quadrants as illustrated in Figure~\ref{fig:multicross}. I.e., we choose a horizontal and a vertical line that separate the horizontal and vertical ends of paths for $A_1$ from those for $A_2$, respectively.

\begin{figure}[htb]
 \centering
 \psfrag{2}[cc][cc]{$\lceil\frac{m}{2}\rceil$}
 \psfrag{0}[cc][cc]{$1,2$}
\psfrag{n}[cc][cc]{$m$}
 \includegraphics{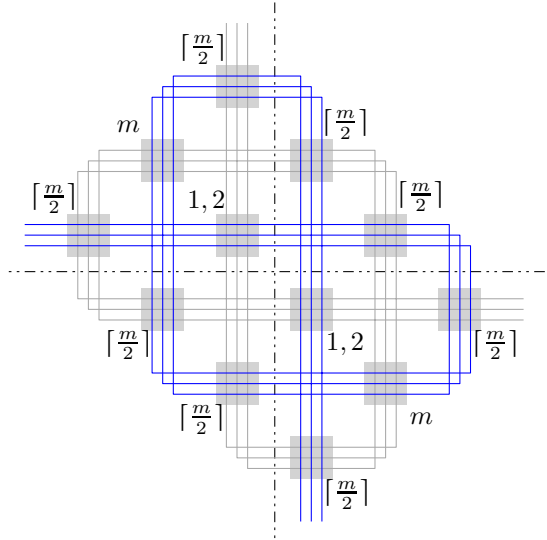}
 \caption{A blown up pretzel for $m=6$ and $j=3$. The dashed lines indicate the four quadrants used in the proof of Theorem~\ref{thm:kmm3}. Type~$2$ blocks are highlighted by grey boxes. Next to the boxes the numbers of staircases that can be introduced there are noted. For better readability grid-edges between distinct blocks are drawn longer.}
 \label{fig:multicross}
\end{figure}

\begin{thm}\label{thm:kmm3}
If $m$ is even, then $b(K_{m,{1}/{4}m^3-{1}/{2}m^2-m+4}) = m-1$. If $m$ is odd, then $b(K_{m,{1}/{4}m^3-m^2+{3}/{4}m}) = m-1$.
\end{thm}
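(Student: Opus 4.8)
The plan is to combine the lower bound from the Lower-Bound-Lemma~I with an explicit $(m-1)$-bend representation of $K_{m,n}$ for the stated value of $n$, realised by the blown-up pretzel described above. For the lower bound, plugging $k=m-2$ into Lemma~\ref{lem:low} and using $n \geq (m-1)^2$ (which holds for both the even and odd values of $n$ in the statement, at least for $m$ large, and can be checked directly for small $m$) forces $b(K_{m,n}) \geq m-1$. So the real content is the matching upper bound: exhibiting a representation in which every path has exactly $2(m-1)-1 = 2m-3$\dots wait, we want $m-1$ bends, so the relevant pretzel parameter is $2j-1 = m-1$, i.e.\ $j = m/2$ when $m$ is even and the analogous half-integer bookkeeping when $m$ is odd. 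I would first pin down $j$ so that the paths for $A$ have exactly $m-1$ bends, and then count how many paths for $B$ can be accommodated.

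The heart of the construction is already set up in the paragraphs preceding the theorem: the $m$ paths representing $A=A_1\cup A_2$ form a blown-up pretzel whose crossings are organised into blocks of type $1_1$, $1_2$, and $2$, laid out in a checkerboard pattern and grouped into four quadrants by one horizontal and one vertical separating line. A vertex $b\in B$ must be adjacent to all of $A$, so its path $P(b)$ must share a grid-edge with each of the $m$ paths for $A$; the natural way to do this with few bends is to route $P(b)$ as a staircase through a single block, picking up intersections with all $|A_1|$ (or $|A_2|$) paths of one colour on each segment. The key combinatorial step is therefore: (i) show that inside a type-$2$ block of size $|A_1|\times|A_2| = \lfloor m/2\rfloor\times\lceil m/2\rceil$ one can insert many pairwise-``compatible'' staircases, each hitting all $m$ paths of $A$, using at most $m-1$ bends each, and determine exactly how many fit; (ii) do the same for the triangular blocks and the type-$1_i$ blocks; then (iii) sum the capacities of all usable blocks over the four quadrants. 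The count should telescope: the $i$-th horizontal/vertical segment of $P(a_1^1)$ has length roughly $\lfloor m/2\rfloor(m-i)+\lceil m/2\rceil(m-i+1)$, so the blocks have sizes that shrink linearly, and summing $\sum_i (\text{block width})\cdot(\text{staircases per block})$ produces the cubic-in-$m$ total; one then checks that the arithmetic yields exactly $\tfrac14 m^3 - \tfrac12 m^2 - m + 4$ for even $m$ and $\tfrac14 m^3 - m^2 + \tfrac34 m$ for odd $m$. The numbers annotated next to the grey boxes in Figure~\ref{fig:multicross} ($1,2$ and $\lceil m/2\rceil$, and $m$) are precisely these per-block staircase capacities, so the proof amounts to verifying those annotations and adding them up.

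I expect two points to require care. First, one must argue that the staircases for distinct $b\in B$ do not accidentally create edges among themselves or extra edges to $A$: this is the usual EPG bookkeeping — route the $b$-staircases so their horizontal (resp.\ vertical) segments lie on private grid-lines except where they deliberately meet an $A$-path, and verify that a staircase confined to one block together with the checkerboard structure prevents collisions with staircases in other blocks. Second, and this is the main obstacle, one must be sure the staircases really get by with only $m-1$ bends while still meeting \emph{all} $m$ paths of $A$: a staircase through a type-$2$ block meets $|A_1|$ paths along its horizontal runs and $|A_2|$ along its vertical runs, and since $|A_1|+|A_2|=m$ a staircase alternating these runs uses on the order of $m-1$ bends — but the exact count depends on whether the block is rectangular or triangular and on how the entry/exit segments are attached to the ``enlarged starting segments'' and the common ending line mentioned in the construction. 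Handling the triangular blocks at the bends (where a staircase can be shortened, hence the capacity jumps) and making the four-quadrant split so that every $A$-path is hit is where the case distinction between even and odd $m$ enters, and it is the part of the argument I would write out most carefully, ideally by a direct figure-driven verification of the per-block counts followed by a clean summation.
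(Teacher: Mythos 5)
Your proposal follows essentially the same route as the paper: the lower bound comes from the Lower-Bound-Lemma~I with $n \geq (m-1)^2$, and the upper bound from the blown-up pretzel with $j=\lfloor m/2\rfloor$ (so the $A$-paths have $m-1$ bends for even $m$ and one bend fewer than allowed for odd $m$) together with $m$ unit-segment staircases per type-$2$ block, reduced to $\lceil m/2\rceil$ (respectively $1$ or $2$) at blocks adjacent to one (respectively two) coordinate axes, summed over the $\lfloor\frac{m}{2}\rfloor(\lfloor\frac{m}{2}\rfloor+1)$ type-$2$ blocks. The one correction: the paper anchors staircases \emph{only} at type-$2$ blocks --- the type-$1_i$ and triangular blocks contribute no separate capacity but merely receive the overhanging outer segments of those staircases --- so your step (ii) should be dropped rather than carried out.
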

\begin{proof}
 We represent the bipartition class $A$ by a blown up pretzel, where $j:=\lfloor\frac{m}{2}\rfloor$. That is, if $m$ is odd we use one bend less than allowed. We analyze the blocks of the blown-up pretzel. Inside a quadrant, the order in which the paths $P(a^i_h)$ ($h=1,\ldots,\lceil \frac{m}{2} \rceil$, $i=1,2$) appear horizontally and vertically in the blocks is always the same. See Figure~\ref{fig:kmm3} for an example.

 Next we explain how to represent a subset of vertices of $B$ as small staircases with $m-1$ bends inside the first (that is the top-right) quadrant. Pick a block of type~$2$ inside the quadrant. We use staircases that go from lower left to upper right. Half of the paths start with a horizontal segment and the other half with a vertical segment. All segments have length~$1$. As illustrated in Figure~\ref{fig:kmm3}, it is possible to put $m$ paths into one block of type~$2$. If $m$ is even, two paths lie completely within the blocks, $\frac{m-2}{2}$ paths have segments lying in a triangular sector in the type~$1_i$ blocks above and to the left, and $\frac{m-2}{2}$ paths have segments in a triangular sector in the type~$1_i$ blocks below and to the right. If $m$ is odd, it is one path that lies completely within the blocks, $\frac{m-1}{2}$ paths have segments in a triangular sector in the type~$1_i$ blocks above and to the left, and $\frac{m-1}{2}$ paths have segments in a triangular sector in the type~$1_i$ blocks below and to the right. Note that the type~$1_i$ blocks above and to the right may be triangular, i.e., correspond to a set of bends and that in this case still every segment establishes an edge-intersection. This case is depicted in Figure~\ref{fig:kmm3}.

 The staircases for each type~$2$ block are completely contained in a square area as illustrated in Figure~\ref{fig:kmm3}. Since squares for distinct blocks are disjoint, no two staircases have an edge-intersection. Finally, we remove all staircases that are crossing the coordinate axes, i.e., keep only those completely contained in the first quadrant.

 \begin{figure}[htb]
  \centering
  \psfrag{1}[cc][cc]{$P(a^1_1)$}
  \psfrag{2}[cc][cc]{$P(a^1_2)$}
  \psfrag{3}[cc][cc]{$P(a^1_3)$}
  \psfrag{4}[cc][cc]{$P(a^1_4)$}
  \psfrag{a}[cc][cc]{$P(a^2_1)$}
  \psfrag{b}[cc][cc]{$P(a^2_2)$}
  \psfrag{c}[cc][cc]{$P(a^2_3)$}
  \psfrag{d}[cc][cc]{$P(a^2_4)$}
  \psfrag{...}[cc][cc]{$\cdots$}
  \includegraphics{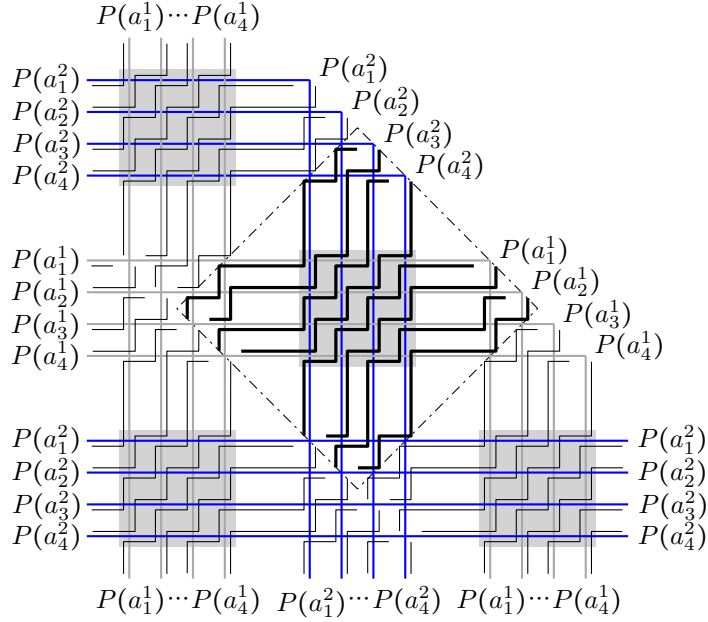}
  \caption{Inserting $m$ staircases for each type~$2$ block in the first quadrant. For better readability grid-edges between distinct blocks are drawn longer.}
  \label{fig:kmm3}
 \end{figure}

 Next we extend the above construction to all four quadrants by mirroring the staircase paths along the coordinate axes. Since type~$2$ blocks correspond to crossings between $P_1$ and $P_2$ in the underlying pretzel, there are $\lfloor\frac{m}{2}\rfloor(\lfloor\frac{m}{2}\rfloor+1)$ of them. Note further that at type~$2$ blocks close to the coordinate axes we have removed some staircases that crossed the coordinate axes. In particular we have introduced only $\lceil\frac{m}{2}\rceil$ staircases at a type~$2$ block close to one coordinate axis. There are $2(2j-1)=4\lfloor\frac{m}{2}\rfloor-2$ of these blocks. At the two type~$2$ blocks next to two coordinate axes all but $1$, respectively $2$, paths have been removed for $m$ odd, respectively $m$ even. In Figure~\ref{fig:multicross} we summarize the number of staircases introduced at each type~$2$ block for $m=6$ and $j = \lfloor\frac{m}{2}\rfloor = 3$.

 In total for odd $m$ we obtain $(\lfloor\frac{m}{2}\rfloor(\lfloor\frac{m}{2}\rfloor+1)-(4\lfloor\frac{m}{2}\rfloor))m+(4\lfloor\frac{m}{2}\rfloor-2)\lceil\frac{m}{2}\rceil+2$ staircases, which simplifies to $\frac{1}{4}m^3-m^2+\frac{3}{4}m$. For even $m$ we get the same formula plus $2$, which simplifies to $\frac{1}{4}m^3-\frac{1}{2}m^2-m+4$. Hence we have constructed a $(m-1)$-bend representation of $K_{m,{1}/{4}m^3-{1}/{2}m^2-m+4}$ for $m$ even and $K_{m,{1}/{4}m^3-m^2+{3}/{4}m}$ for $m$ odd, which completes the proof.
\end{proof}

\begin{rem}
 By a straight-forward application of the Lower-Bound-Lemma I one sees that already for much smaller $n$ than in the above theorem $m-1$ bends are necessary, i.e., $b(K_{m,(m-1)^2})\geq m-1$ for all $m\geq 3$.
\end{rem}

It is known that the bend-number of $K_{m,n}$ can be as large as $2m-2$~\cite{Asi-09}, but not larger~\cite{Gol-09}. Let $n^*$ denote the maximal $n$ for which $b(K_{m,n}) < 2m-2$. Asinowski and Suk~\cite{Asi-09} proved that $n^* \in \mathcal{O}(m^m)$. This was later improved by Biedl and Stern~\cite{Bie-10} to $n^* \leq 4m^2-8m^3+2m^2+2m$, who also conjectured that $n^* \in \mathcal{O}(m^2)$. Note that Theorem~\ref{thm:kmm3} above implies that $n^* \geq \frac{1}{4}m^3-m^2$, and hence disproves this conjecture.

Next we show that $n^* \geq m^4 - 2m^3 + \frac{5}{2}m^2-2m-4$, i.e., we find a $(2m-3)$-bend representation of $b(K_{m,n})$ with $n = m^4 - 2m^3 + \frac{5}{2}m^2-2m-4$, see Theorem~\ref{thm:m4}. Afterwards, we present the Lower-Bound-Lemma~II, a special case of which improves the upper bound on $n^*$ to $m^4 - 2m^3 + 5m^2 -4m$, see Theorem~\ref{thm:kmm4}. Hence we have narrowed $n^*$ down to an interval of length less than $2.5m^2$.

\begin{thm}\label{thm:m4}
 If $n\leq m^4-2m^3+\frac{5}{2}m^2-2m-4$ then $b(K_{m,n})\leq 2m-3$.
\end{thm}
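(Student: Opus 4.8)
The plan is to imitate the construction in the proof of Theorem~\ref{thm:kmm3}, but with the bend budget doubled: represent the bipartition class $A$ by a blown-up pretzel whose underlying pretzel has parameter $j=m-1$, so that every path $P(a)$ with $a\in A$ is a staircase-type path with $2j-1=2m-3$ bends. As before, I would split $A$ evenly into $A_1=(a_1^1,\ldots,a^1_{\lfloor\frac{m}{2}\rfloor})$ and $A_2=(a_1^2,\ldots,a^2_{\lceil\frac{m}{2}\rceil})$, let the paths for $A_i$ be scaled copies of $P_i$, and enlarge the first and last segments so that all starting segments and all ending segments become aligned. Since two $(2m-3)$-bend paths cross in $j(j+1)=m(m-1)$ points, this blown-up pretzel has $m^2-m$ type-$2$ blocks (instead of only $\lfloor\frac{m}{2}\rfloor(\lfloor\frac{m}{2}\rfloor+1)$), arranged together with the type-$1_1$ and type-$1_2$ blocks in the usual checkerboard pattern and partitioned into four quadrants by a horizontal and a vertical cut separating the horizontal resp.\ vertical path-ends of $A_1$ from those of $A_2$.

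The core of the argument is to describe, for one ``interior'' type-$2$ block (one not meeting the two cut lines), how to route a large family of pairwise edge-disjoint $(2m-3)$-bend staircases, each meeting all $m$ paths of $A$ in a grid-edge. Each such staircase will be monotone with unit-length segments and will live in a square region formed by the type-$2$ block together with triangular sectors of the four surrounding type-$1_i$ blocks; squares belonging to distinct type-$2$ blocks are made disjoint, so that no two $B$-staircases share a grid-edge and $B$ stays independent. The extra bends compared with Theorem~\ref{thm:kmm3} are spent in two ways: part of the budget is used, as before, to cross the $\lfloor\frac{m}{2}\rfloor$ ``vertical'' and $\lceil\frac{m}{2}\rceil$ ``horizontal'' $A$-segments available near the block, and the remaining roughly $m-2$ segments are used as a prefix/suffix of detours that shift the staircase into one of many parallel positions while still reaching every $A$-path. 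Quantifying this packing --- I expect on the order of $(m-1)^2$ staircases per interior block --- is the first place where the exact constants of the statement are pinned down.

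Having settled the interior blocks, I would extend the construction to all four quadrants by mirroring along the two cut lines and discarding every staircase that crosses a cut line, exactly as in the proof of Theorem~\ref{thm:kmm3}. This forces a careful count of the survivors at the $O(m)$ type-$2$ blocks lying against a single cut line and at the two type-$2$ blocks sitting in the corner of both cut lines, where only a handful of staircases remain (and where the parity of $m$ enters the bookkeeping). Summing the contributions of the interior blocks, the boundary blocks and the two corner blocks, and simplifying, should yield precisely $n=m^4-2m^3+\frac{5}{2}m^2-2m-4$; since $A$ together with these $n$ staircases realises $K_{m,n}$ with every path having at most $2m-3$ bends, this gives $b(K_{m,n})\leq 2m-3$ for all $n$ up to this value.

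The main obstacle is the geometric packing together with the exact counting. One must simultaneously (i) keep every $B$-staircase within $2m-3$ bends; (ii) make sure each $B$-staircase meets all $m$ paths of $A$ in a grid-edge, not merely a grid-point --- in particular the triangular ``bend'' sectors of the type-$1_i$ blocks must still afford an honest edge-crossing; (iii) keep the staircases of different type-$2$ blocks in pairwise disjoint regions so that $B$ is independent; and (iv) squeeze out the maximum number of staircases per block and track the boundary losses accurately enough to land on the claimed polynomial rather than merely its leading term $m^4$. Requirements (ii) and (iv) are in tension --- every additional staircase tends to force some staircase to miss an $A$-path or to overrun the bend budget --- and optimising this trade-off, rather than the bookkeeping itself, is the real content of the proof.
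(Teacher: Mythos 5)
There is a genuine gap, and it is structural rather than a matter of bookkeeping. Your plan represents each vertex of $B$ by a staircase that collects all $m$ edge-intersections \emph{locally}, on $m$ consecutive unit segments inside (a neighbourhood of) one type~$2$ block. Any such staircase has $m-1$ of its bends sitting at grid-points where two of the paths for $A$ cross, and since the paths for $B$ must be pairwise edge-disjoint, at most two of them can bend at any given crossing point. By Lemma~\ref{lem:pnt} the $m$ paths for $A$ cross in at most $\binom{m}{2}m(m-1)$ points, so the total number of such local staircases is at most $2\binom{m}{2}m(m-1)/(m-1)=m^2(m-1)$. In other words, the approach you describe is capped at $n=\Theta(m^3)$ --- exactly the regime of Theorem~\ref{thm:kmm3} --- and cannot reach the claimed $m^4-2m^3+\frac{5}{2}m^2-2m-4$. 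Your estimate of $(m-1)^2$ staircases per interior block (giving $m(m-1)^3$ in total) is therefore unattainable, and spending the extra $m-2$ segments on a prefix/suffix detour does not help, because the $m$ edge-intersections still occupy consecutive segments and hence still consume $m-1$ crossings each. This is precisely the obstruction formalised in Lemma~\ref{lem:c-inequality}: to get $n=\Theta(m^4)$ each path for $B$ may consume only $O(1)$ of the scarce crossing/endpoint resource.

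The paper's construction is built around exactly this constraint and is qualitatively different from Theorem~\ref{thm:kmm3}. Each $P(b)$ starts as a \emph{single-bend} path anchored at one crossing (or one bend) of the blown-up pretzel, establishing edge-intersections with only \emph{two} paths for $A$ there; it is then extended by long snake paths that travel into other quadrants and pick up the remaining $m-2$ paths of $A$ on every \emph{second} segment, with empty connector segments in between. Consecutive edge-intersections along such a path are neither on the same nor on consecutive segments, so the path charges only the single crossing (or bend) at its anchor. This permits two paths per crossing plus one path per bend, i.e.\ $n\approx 2c\approx m^4$, matching the Lower-Bound-Lemma~II up to lower-order terms. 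To repair your argument you would have to abandon the ``one block per $B$-vertex'' picture and adopt this anchor-plus-snake structure.
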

\begin{proof}
 Again, we use blown-up pretzels. This time, we set $j:=m-1$, i.e., we use $(2m-3)$-bend paths. We scale the blown-up pretzel by a factor of $2m^2+3$, such that between any two horizontal (vertical) segments we have $2m^2+2$ horizontal (vertical) grid-lines.

 We start by defining a single-bend path, that is a $1$-bend path, $P(b)$ for every vertex in $b\in B$. Every $P(b)$ has an edge-intersection with two paths for $A$. In a second step every $P(b)$ will be extended to a $(2m-3)$-bend path that has an edge-intersection with $P(a)$ for \emph{every} $a \in A$. Let us consider the first, that is the top-right, quadrant only. The paths for the $q$-th quadrant are defined analogously by rotating the entire representation by $(q-1)\frac{\pi}{2}$ in counterclockwise direction, $q=1,2,3,4$.

 We number the vertical segments of paths for $A$ that intersect the first quadrant by $v_1,v_2,v_3,\ldots$ according to their left-to-right order, i.e., by increasing $x$-coordinate. Similarly, we number the horizontal segments that intersect the first quadrant by $h_1,h_2,h_3,\ldots$ according to their top-down order, that is by decreasing $y$-coordinate. Consider a crossing $v_i \cap h_k$ between two segments corresponding to two distinct paths. We define two single-bend paths that both have a bend at the crossing point $p$. The corresponding vertical segments have length~$2i$ and lie on different sides of $p$. The two corresponding horizontal segments have length~$\max(1,2(\lceil\frac{m}{2}\rceil-k))$ and lie on different sides of $p$, too. In particular, the horizontal segments have length more than $1$ only if $k < \lceil\frac{m}{2}\rceil$. See Figure~\ref{fig:kmm4} for an example.

 \begin{figure}[htb]
  \centering
  \psfrag{1}[cc][cc]{$P(a^1_1)$}
  \psfrag{2}[cc][cc]{$P(a^1_2)$}
  \psfrag{a}[cc][cc]{$P(a^2_1)$}
  \psfrag{b}[cc][cc]{$P(a^2_2)$}
  \includegraphics{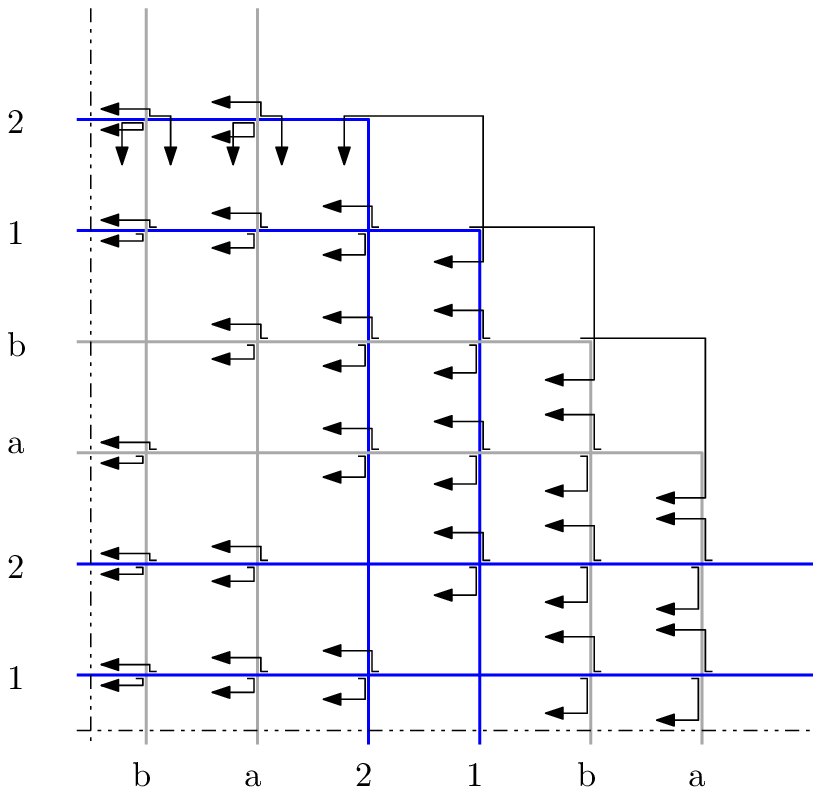}
  \caption{Interlacing two single-bend paths for every crossing and one single-bend path at each but the left-most bend. In a second step these paths are extended by snake paths in the direction indicated by the arrows.}
  \label{fig:kmm4}
 \end{figure}

 We introduce another set of single-bend paths for each but the left-most bend in the first quadrant. Let the bend be the top-end of the vertical segment $v_i$. We define a single-bend path edge-intersecting $v_i$ and the horizontal segments $h_k$ that has a bend with $v_{i-1}$. The horizontal and vertical segment of the single-bend path has length $2m^2+3+\max(1,\lceil\frac{m}{2}\rceil-k)$ and $2m^2 + 3 + 2i$, respectively. In particular, each vertical edge-intersection has length $2i$ and a horizontal edge-intersection has length more than $1$ only if $k < \lceil\frac{m}{2}\rceil$. See Figure~\ref{fig:kmm4} for an example.
 
 Next we extend every single-bend path $P$ at its vertical end. Let $\ell(P)$ be the horizontal grid-line through the vertical end of $P$. From the initial scaling follows that there is at least one further horizontal grid-line between every two $\ell(P)$, as well as between every $\ell(P)$ and every horizontal segment of the blown-up pretzel. Now, for every vertex $a\in A$, except for the two whose paths are already edge-intersecting $P$, consider the \emph{rightmost} vertical segment of $P(a)$ in the \emph{fourth}, that is the top-left, quadrant that crosses $\ell(P)$. We extend $P$ by a snake path connecting all these segments, i.e, every new vertical segment of $P$ is contained in a segment of some $P(a)$, has length~$1$ and its lower end lies on $\ell(P)$. In Figure~\ref{fig:kmm4} the first segment of each snake is indicated by an arrow pointing to the left.

 For some $1$-bend paths $P$ not all paths for $A$ cross the horizontal line $\ell(P)$. However, this is the case only if $P$ has an edge-intersection with $h_k$ for $k < \lceil\frac{m}{2}\rceil$. We let the snake extension for these paths edge-intersect only those paths $P(a)$ that indeed cross $\ell(P)$. In order to establish the remaining edge-intersections we extend the $1$-bend path $P$ at its left-end, too. More precisely, let $\ell'(P)$ be the vertical grid-line through the left-end of $P$. For each path $P(a)$ that has no edge-intersection with $P$ yet, consider the \emph{topmost} horizontal segment of $P(a)$ within the \emph{first} quadrant. We extend $P$ by a snake path of which every new horizontal segment is contained in such a topmost segment, has length~$1$ and whose left end lies on $\ell'(P)$.

 \medskip 

 We claim that we have defined a $(2m-3)$-bend representation of $K_{m,n}$ with $n = m^4-2m^3+\frac{5}{2}m^2-2m-4$. It is easy to check that every path $P(b)$ for a vertex $b$ in $B$ has $2m-3$ bends and edge-intersects all paths for $A$. Consider one quadrant $Q$, say the first. We consider all horizontal edge-intersections in $Q$: Most of the single-bend paths that have been introduced within $Q$ have a horizontal edge-intersection that has length~$1$ and lies next to a vertical segment of some path for $A$. There exist horizontal edge-intersections that are longer, but only within the top-most $\lceil\frac{m}{2}\rceil$ horizontal segments. Secondly, there are horizontal edge-intersections in $Q$ corresponding to snake extensions of single-bend paths in the second quadrant. Those edge-intersections consist of only one grid-edge that does \emph{not} lie next to a vertical segment of some path for $A$. Moreover, these edge-intersection are contained in the bottommost horizontal segments in $Q$ of each $P(a)$, i.e., each is contained in some $h_k$ for $k > \lceil\frac{m}{2}\rceil$. Finally, there are some edge-intersections corresponding to snake extensions of single-bend paths in $Q$. These edge-intersections consist of only one grid-edge that does \emph{not} lie next to a vertical segment of some path for $A$, and are chosen to be top-most, i.e., each is contained some $h_k$ for $k \leq \lceil\frac{m}{2}\rceil$.

 It follows that no two paths for $B$ have a horizontal edge-intersection in the first quadrant. An analogous reasoning holds for vertical edge-intersections, as well as for the other quadrants.

 It remains to determine how many paths for $B$ have been introduced. The paths $P_1,P_2$ in the underlying pretzel have $\binom{j}{2}$ self-crossings each, while there are $j(j+1)$ crossings between $P_1$ and $P_2$. Moreover, in the blown-up pretzel at every bend of one $P_i$ each pair of paths crosses ($i=1,2$). Thus, two paths for vertices in the same $A_i$ cross $2\binom{j}{2}+2j-1 = j(j+1)-1$ times, and two paths, one for a vertex in $A_1$ and the other for a vertex in $A_2$, cross $j(j+1)$ times. In terms of $m$, every pair of paths for the same $A_i$ crosses $m(m-1)-1$ times, and every pair of paths for distinct $A_i$ crosses $m(m-1)$ times. Hence the total number of crossings between paths representing vertices in $A$ is given by $\lfloor\frac{m}{2}\rfloor\lceil\frac{m}{2}\rceil m(m-1)+(\binom{\lceil m/2\rceil}{2}+\binom{\lfloor m/2\rfloor}{2})(m(m-1)-1)$.

 We have introduced two paths for every crossing in the blown-up pretzel and one path for all but one bend in each quadrant. Thus we have constructed a $(2m-3)$-bend representation of $K_{m,n}$ with $n = 2\lfloor\frac{m}{2}\rfloor\lceil\frac{m}{2}\rceil m(m-1)+ 2(\binom{\lceil m/2\rceil}{2}+\binom{\lfloor m/2\rfloor}{2})(m(m-1)-1) + m(2m-3) - 4 = \lfloor m^4-2m^3+\frac{5}{2}m^2-2m-4\rfloor$.
\end{proof}

Next we present a second lower bound on $b(K_{m,n})$ that is weaker than the Lower-Bound-Lemma~I for $n < m^2$, but stronger for $n > m^3$.

\begin{lem}\label{lem:c-inequality}
 In a $k$-bend representation of $K_{m,n}$ let $c$ denote the total number of crossings between the paths $P(a_1),\ldots,P(a_m)$. Then we have:
 \begin{displaymath}
  n(2m - k -2) \leq 2c + 2(k+1)m
 \end{displaymath}
\end{lem}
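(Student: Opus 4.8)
The plan is a global charging argument. Fix a $k$-bend representation of $K_{m,n}$, orient every path, and for each $b\in B$ consider the \emph{visit-blocks} of $P(b)$: the maximal runs of consecutive grid-edges of $P(b)$ that lie on a single path $P(a)$ with $a\in A$. Since every path of $A$ edge-intersects $P(b)$, there are at least $m$ visit-blocks, hence at least $2m$ \emph{block-ends}. I will charge almost all of these $2m$ block-ends, for each $b$, either to an endpoint of a segment of some $A$-path or to a crossing between two $A$-paths; summing the charges over all $b\in B$ and comparing with the total number of available targets will give the inequality.

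First I would classify a block-end. Let $V$ be a visit-block of $P(b)$ lying on $P(a)$ along a grid-line $\ell$, let $z$ be its (right or top) end, and let $e$ be the grid-edge of $V$ incident to $z$; note $e\in P(a)\cap P(b)$. Since $V$ is maximal, either (i) the segment of $P(a)$ containing $V$ ends at $z$, or (ii) that segment runs straight through $z$ but $P(b)$ bends or terminates at $z$. In case (i) I charge the block-end to the pair (that $A$-segment, $z$); this is injective, because $e$ is the extreme grid-edge of that $A$-segment on the $z$-side and lies in at most one $B$-path, so each such pair is charged at most once, and there are at most $2(k+1)m$ of them. In case (ii), if $P(b)$ terminates at $z$ this is one of at most two block-ends per $b$ that I discard; if $P(b)$ bends at $z$ onto the perpendicular line $\ell'$ and the grid-edge of $P(b)$ leaving $z$ along $\ell'$ lies on some $A$-path $P(a')$, then $P(a)$ passes straight through $z$ while $P(a')$ also meets $z$ (passing through it, or ending there), so $z$ is a crossing of $P(a)$ and $P(a')$ and I charge the block-end to this crossing. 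The block-ends that remain uncharged are the at most two ending at a terminus of $P(b)$, together with, at each bend of $P(b)$, the end of a block that runs straight through the path it leaves when $P(b)$ turns onto a line carrying no $A$-path at all --- at most $k$ of these. Hence for every $b$ at least $2m-k-2$ block-ends get charged, and summing over $b\in B$ gives $n(2m-k-2)\le 2(k+1)m+T$, where $T$ is the total number of charges made to crossings.

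The hard part will be showing $T\le 2c$, i.e. that each crossing receives at most two charges. A charge to a crossing at a point $z$ is produced by a $B$-path bending precisely at $z$ while following one of the two $A$-paths through $z$ into the bend. I would enumerate the possible local shapes of such a $B$-path at $z$ (entering $z$ from the west or east along the horizontal $A$-path through $z$, leaving to the north or south, and the symmetric possibilities), observe that any two of these shapes that share a side at $z$ would have to contain a common grid-edge and hence cannot occur simultaneously (distinct $B$-paths are edge-disjoint), so that the conflict graph of the four shapes is a $4$-cycle and at most two of them are realised at $z$; then I would argue that this caps the charges at $z$ by two, choosing at each bend that coincides with a crossing which block-end is charged to the crossing and which is absorbed into the discarded ends, so that the $2(k+1)m$ and $2c$ bounds both stay tight. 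This bookkeeping --- getting the crossing term to be exactly $2c$ rather than a larger constant multiple --- is the main technical obstacle. As in the proof of the Lower-Bound-Lemma~I (Lemma~\ref{lem:low}) one may assume each $P(b)$ has exactly $k$ bends, or simply note that the claimed inequality only becomes stronger when some paths have fewer bends, which handles the boundary cases.
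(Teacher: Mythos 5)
Your proposal is correct and is essentially the paper's own argument: both charge each $b\in B$ with at least $2m-k-2$ objects, each object being either an endpoint of a segment of an $A$-path (chargeable at most once, with $2(k+1)m$ available) or a crossing between two $A$-paths (chargeable at most twice, with $c$ available). The only difference is bookkeeping --- the paper walks along $P(b)$ and makes one charge per \emph{consecutive pair} of edge-intersections (two $A$-endpoints if the pair lies on one segment of $P(b)$, one crossing or endpoint if it straddles a bend), so each bend of $P(b)$ yields at most one crossing-charge by construction, whereas your block-end accounting can present two candidate crossing-charges at a single bend; the fix you sketch (charge one, absorb the other into the discards, keeping the losses at one per bend plus one per terminus so that both the $2(k+1)m$ and the $2c$ caps survive) is exactly the right resolution of the ``main technical obstacle'' you flag.
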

\begin{proof}
 Fix a $k$-bend representation of $K_{m,n}$. We associate every path $P(b)$, $b\in B$, with some crossings between paths for $A$ and some endpoints of segments of paths for $A$. Every crossing will be associated with at most two paths for $B$, and every endpoint of a segment with at most one path for $B$.

 Consider a vertex $b \in B$ and its path $P(b)$. Let $l$ be the number of segments of $P(b)$ on which \emph{no} edge-intersection occurs. Going along $P(b)$ we obtain a total order for the edge-intersections $P(b) \cap P(a)$ for all $a \in A$. For two edge-intersections $P(b)\cap P(a)$ and $P(b)\cap P(a')$ that appear \emph{consecutive} in this order let $s$ and $s'$ be the corresponding segments, respectively.

 If $s$ and $s'$ lie on the \emph{same} segment of $P(b)$, then we associate with $P(b)$ the two endpoints of $s$ and $s'$ with no edge-intersection between them. Since $b$ has degree $m$, $m$ edge-intersections occur on $k+1-l$ segments of $P(b)$. Thus we associate with $P(b)$ in this first step at least $2(m - (k+1-l)) = 2m - 2(k+1) + 2l$ endpoints of segments of paths for $A$.

 Now assume that $s$ and $s'$ lie on two \emph{consecutive} segments of $P(b)$. If $s$ and $s'$ cross at the corresponding bend of $P(b)$, we associate this crossing with $P(b)$. Otherwise at least one endpoint of $s$ or $s'$ lies on $P(b)$ and has no edge-intersection between itself and the bend of $P(b)$. We associate $P(b)$ with this endpoint. Since $P(b)$ has edge-intersections on $k+1-l$ of its $k+1$ segments we associate with $P(b)$ in this second step at least $k - 2l$ crossings between paths for $A$ or endpoints of segments of paths for $A$.

 Note that every endpoint of a segment of a path for $A$ is associated with at most one $P(b)$ and every  crossing between paths for $A$ is associated with at most two paths for $B$. Hence in total we have associated with all the paths for $B$ at least $n(2m - 2(k+1) + 2l + k - 2l) = n(2m - k -2)$ crossings and endpoints of paths for $A$. On the other hand every $P(a)$ with $a\in A$ has only $2(k+1)$ endpoints of segments and the total number of crossings between paths for $A$ is $c$. Thus $n(2m - k -2) \leq 2c + 2(k+1)m$, which completes the proof.
\end{proof}

We want to use the inequality in Lemma~\ref{lem:c-inequality} as a lower bound on $k$ for fixed $m$ and $n$. Therefore it remains to prove an upper bound on $c$, that is on the number of times $m$ $k$-bend paths can cross each other. We content ourselves with bounding the number of times two $k$-bend paths can cross in case $k$ is odd.

\begin{lem}\label{lem:pnt}
 Two $(2j-1)$-bend paths cross in at most $j(j + 1)$ points.
\end{lem}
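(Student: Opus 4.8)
The goal is to show that two $(2j-1)$-bend paths $P_1,P_2$ share at most $j(j+1)$ grid-edges; the pretzel construction in Figure~\ref{fig:cross} shows this is tight. The key observation is that a common grid-edge of $P_1$ and $P_2$ forces a segment of $P_1$ and a segment of $P_2$ to overlap, and two segments in the same direction can overlap in at most one maximal subsegment (a ``crossing block''); so it suffices to count the pairs $(s_1,s_2)$, $s_1$ a segment of $P_1$ and $s_2$ a segment of $P_2$ of the same orientation, that genuinely overlap. Each path has $2j$ segments, $j$ horizontal and $j$ vertical (or $j{+}1$ and $j{-}1$ depending on the orientation of the first segment; I will handle the at most two such cases, and the $j+j$ split is the one that maximizes the product and hence the bound). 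So a priori there are at most $j\cdot j + j\cdot j$ same-orientation pairs, giving $2j^2$, which is too weak — I need to save roughly $j^2-j$ of them.

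\textbf{The main idea.} I would set up a ``staircase coordinate'' argument. Walk along $P_1$ and record, for each of its $j$ horizontal segments, the horizontal grid-line $y$-coordinate it lies on; similarly for the $j$ vertical segments and their $x$-coordinates. Do the same for $P_2$. A horizontal segment of $P_1$ on line $y$ and a horizontal segment of $P_2$ on line $y'$ can only overlap if $y=y'$; likewise vertical overlaps need equal $x$-coordinates. The crux is that along a single path consecutive parallel segments must be on \emph{distinct} lines, and more importantly, as one traverses $P_1$ the sequence of horizontal lines visited is ``monotone in blocks'': between two visits to horizontal lines, $P_1$ must make a vertical excursion, and these excursions, together with the interleaving structure forced by $P_2$'s bends, limit how often the $y$-coordinate sequence of $P_1$ and that of $P_2$ can agree. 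I would make this precise by a charging/interleaving argument: order all $2j$ overlap-blocks along $P_1$; show that consecutive overlap-blocks along $P_1$ that are also consecutive along $P_2$ must be ``turns'' (occur at a bend of one of the paths), exactly as in the proof of Lemma~\ref{lem:c-inequality}. Counting bends: $P_1$ and $P_2$ together have $2(2j-1)=4j-2$ bends, which bounds the number of ``consecutive-consecutive'' coincidences, and one then solves the resulting recursion/inequality to get at most $j(j+1)$.

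\textbf{Alternative cleaner route.} Possibly simpler: induct on $j$. Given $P_1,P_2$ each with $2j-1$ bends, look at the last segment of $P_1$; it meets $P_2$ in at most \dots\ well, in as many blocks as $P_2$ has segments of that orientation that it can reach, but crucially after removing the last segment of $P_1$ and the portion of $P_2$ beyond the last block, what remains are two paths with fewer bends. The bookkeeping — how many bends are ``used up'' when we peel off one crossing block — is exactly what produces the quadratic bound $j(j+1) = (j-1)j + 2j$. So I expect the induction step to read: peeling the outermost block off $P_1$ removes one segment of $P_1$ (hence one bend) and costs at most $2j$ new blocks, reducing to the $(j-1)$ case, whence $c(j) \le c(j-1) + 2j$ and $c(1)\le 2$, giving $c(j)\le j(j+1)$.

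\textbf{Main obstacle.} The real difficulty is the geometric lemma underlying either approach: making rigorous why a single segment of $P_1$, even a very long one, cannot be hit by more than roughly $j$ of the (only $j$-or-$j{+}1$-many) parallel segments of $P_2$ in a way that is not already absorbed by the bend-count — equivalently, why long segments do not help. One has to rule out configurations where $P_2$ ``zig-zags'' across one long segment of $P_1$ many times; each such crossing costs $P_2$ two bends, so $P_2$ can cross a fixed line at most $j$ times, and this is the quantitative heart of the argument. I would isolate this as: \emph{a $(2j-1)$-bend path meets any fixed grid-line in at most $j$ maximal subsegments}, prove it by noting consecutive such subsegments are separated by at least one bend, and then feed it into the counting above. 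Everything else is the routine interleaving/charging bookkeeping already rehearsed in Lemma~\ref{lem:c-inequality}.
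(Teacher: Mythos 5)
Your proposal counts the wrong quantity. In Lemma~\ref{lem:pnt} a ``crossing'' is a grid-point where a horizontal segment of one path meets a vertical segment of the other transversally (this is how crossings are used in Lemma~\ref{lem:c-inequality}, where a crossing of $s$ and $s'$ sits at a \emph{bend} of $P(b)$, and it is what the pretzels of Figure~\ref{fig:cross} realize $j(j+1)$ times); it is not a shared grid-edge, and it does not arise from two \emph{parallel} segments overlapping. Your entire setup --- counting pairs of same-orientation segments of $P_1$ and $P_2$ that overlap on a common grid-line --- therefore bounds a different combinatorial quantity and cannot be repaired by bookkeeping: two paths can have $j(j+1)$ crossings while having no parallel overlap at all.

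Even if one reinterprets your plan for the correct notion, the quantitative heart is missing. The trivial bound is $2j^2$ (each of the $j$ vertical segments of $P_1$ meets each of the $j$ horizontal segments of $P_2$ at most once, and symmetrically), and the whole content of the lemma is the saving of $j^2-j$. Your inductive route asserts $c(j)\leq c(j-1)+2j$, but peeling one segment off $P_1$ leaves a $(2j-2)$-bend path against a $(2j-1)$-bend path, not two $(2j-3)$-bend paths; peeling enough to genuinely reduce $j$ removes up to $4j-2$ crossings, and the recursion then only returns $2j^2$. Your proposed sub-lemma (a $(2j-1)$-bend path meets a fixed grid-line in at most $j$ maximal pieces) likewise reproduces the trivial bound. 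The paper's actual argument is quite different: it two-colors the segments (vertical segments of $P$ and horizontal segments of $P'$ blue, the rest red), observes that every crossing is monochromatic with $j^2$ candidate pairs of each color, and charges each blue crossing to up to two red \emph{non}-crossings found among the neighboring segments on either side of the supporting grid-line; a careful analysis of the uncharged and singly-charged blue crossings (they must involve the extremal blue segments $s_1,s'_1$) yields $|\mathcal{B}|\leq|\overline{\mathcal{R}}|+j$, whence $|\mathcal{B}|+|\mathcal{R}|\leq j^2+j$. Nothing in your plan substitutes for this charging step, and the appeal to ``routine interleaving as in Lemma~\ref{lem:c-inequality}'' does not supply it, since that lemma takes the bound on $c$ as an input rather than proving one.
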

\begin{proof}
 Consider two given $(2j-1)$-bend paths $P$ and $P'$. Both have exactly $j$ horizontal and $j$ vertical segments. We color the vertical segments of $P$ and the horizontal segments of $P'$ blue and the remaining segments red. Now every crossing is monochromatic. We partition the pairs of segments that have the same color but come from different paths into four sets. Set $\mathcal{B}$ contains all blue pairs that \emph{do} cross and $\overline{\mathcal{B}}$ all blue pairs that \emph{do not} cross. Similarly $\mathcal{R}$ and $\overline{\mathcal{R}}$ are defined for red segments. Along each path we index the segments, starting with its blue end, i.e., $s_1$ and $s'_1$ are blue and $P = (s_1,\ldots,s_{2j})$ and $P' = (s'_1,\ldots,s'_{2j})$. 

 Consider a blue crossing $\{s_i,s'_h\} \in \mathcal{B}$ and the grid-line $\ell$ containing $s_i$. Each of $s_{i-1},s_{i+1},s'_{h-1},s'_{h+1}$, if existent, is red and lies completely on one side of $\ell$. Moreover $s'_{h-1}$ and $s'_{h+1}$ cannot lie on the same side since $s'_h$ crosses $\ell$. Now consider $s_{i-1}$ (or $s_{i+1}$) and the segment of $P'$ on the other side of $\ell$. This pair evidently is in $\overline{\mathcal{R}}$. This way we associate up to two red non-crossings with every blue crossing, even if there are more (see Figure~\ref{fig:non-cross}).

 \begin{figure}[htb]
  \centering
  \psfrag{w}[cc][cc]{$s'_1$}
  \psfrag{w2}[cc][cc]{$s'_2$}
  \psfrag{w1}[cc][cc]{$s'_{h-1}$}
  \psfrag{w3}[cc][cc]{$s'_{h+1}$}
  \psfrag{v1}[cc][cc]{$s_{i-1}$}
  \psfrag{v3}[cc][cc]{$s_{i+1}$}
  \psfrag{l}[cc][cc]{$\ell$}
  \includegraphics{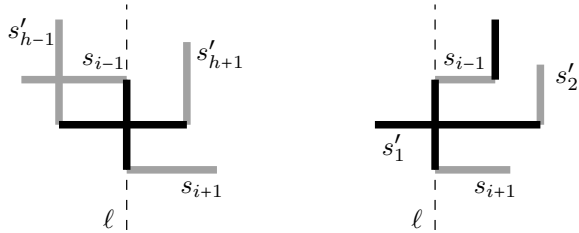}
  \caption{A blue crossing is associated with every pair of red segments from different sides of $\ell$: The blue crossing on the left is associated with $\{s_{i-1},s'_{h+1}\}$ and $\{s_{i+1},s'_{h-1}\}$. The blue crossing on the right is associated with no red non-crossing.}
  \label{fig:non-cross}
 \end{figure}

 Next we partition $\mathcal{B}$ in two ways. First, divide $\mathcal{B}$ into $\mathcal{B}_0, \mathcal{B}_1$, and $\mathcal{B}_2$ according to the number of red non-crossings, the blue crossings are associated with in the above way. Second, we write $\mathcal{B}(s_1)$ for the set of blue crossings $s_1$ participates in and do the same with $s'_1$. Furthermore, we denote $\mathcal{B}_i(s'_1) := \mathcal{B}_i \cap \mathcal{B}(s'_1)$ for $i=0,1,2$. If a blue crossing is associated to no red non-crossing, then $s'_1$ has to participate in the crossing, i.e., $\mathcal{B}_0=\mathcal{B}_0(s'_1)$. Moreover, a blue crossing is associated to exactly one red non-crossing if either $s'_1$ is involved and we associate exactly one non-crossing or $s_1$ is involved in the crossing but not $s'_1$. This is, $\mathcal{B}_1=(\mathcal{B}(s_1)\backslash\mathcal{B}(s'_1))\cup \mathcal{B}_1(s'_1)$. Note that every red non-crossing is associated with at most two blue crossings and hence we have $|\mathcal{B}_1|+2|\mathcal{B}_2|\leq 2|\overline{\mathcal{R}}|$. This leads to:

 \vspace{-0.8em}
 \begin{displaymath}
  2|\mathcal{B}|\leq 2|\overline{\mathcal{R}}| + |\mathcal{B}(s_1)\backslash\mathcal{B}(s'_1)| + |\mathcal{B}_1(s'_1)| + 2|\mathcal{B}_0(s'_1)|
 \end{displaymath}

 First, note that Since $P, P'$ have only $j$ blue segments we clearly have $|\mathcal{B}(s_1)\backslash\mathcal{B}(s'_1)|\leq j-1$.

 Second, observe the following: On the path $P$ between any two blue segments contributing to a $\mathcal{B}_0(s'_1)$-crossing, either there is another blue segment contributing to $\mathcal{B}_0(s'_1)$ or there is a blue segment of $P$ that participates in a $\mathcal{B}_2(s'_1)$-crossing or there is one which does not cross $s'_1$ at all. Hence, because $P$ has only $j$ blue segments, we have $2|\mathcal{B}_0(s'_1)|-1+|\mathcal{B}_1(s'_1)|\leq j$.

 Plugging both into the above inequality, we calculate $2|\mathcal{B}|\leq 2|\overline{\mathcal{R}}|+2m$. Thus, $|\mathcal{B}|-|\overline{\mathcal{R}}|\leq j$. Now adding $j^2$ on both sides we obtain: $|\mathcal{B}|+|\mathcal{R}|\leq j^2 + j$, where $\mathcal{R}$ denotes the set of red crossings. In particular, the number of crossings is at most $j(j+1)$.
\end{proof}

Note that pretzels, see Figure~\ref{fig:cross}, witness that the bound in the above lemma is tight. The following question arises:
\begin{quest}\label{quest:cross}
 What is the maximum number of crossings that two $2j$-bend paths can have?
\end{quest}

Next we combine Lemma~\ref{lem:c-inequality} and Lemma~\ref{lem:pnt} to achieve a second Lower-Bound-Lemma.

\begin{lem}[\textbf{Lower-Bound-Lemma~II}]\label{lem:lower-bound-2}
 For every $k$-bend representation of $K_{m,n}$ we have
 \begin{displaymath}
  n(2m-k-2) \leq m(m-1)\lceil\frac{k+1}{2}\rceil\lceil\frac{k+3}{2}\rceil + 2(k+1)m.
 \end{displaymath}
\end{lem}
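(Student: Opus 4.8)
The plan is to combine the two preceding lemmas in the most direct way possible. Lemma~\ref{lem:c-inequality} states that in any $k$-bend representation of $K_{m,n}$ the total number $c$ of crossings among the $m$ paths $P(a_1),\dots,P(a_m)$ satisfies $n(2m-k-2)\le 2c+2(k+1)m$. So it suffices to prove the upper bound $c\le m(m-1)\lceil\frac{k+1}{2}\rceil\lceil\frac{k+3}{2}\rceil/2$ on the number of pairwise crossings, which together with Lemma~\ref{lem:c-inequality} immediately yields the claimed inequality.

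First I would reduce to counting crossings between a \emph{single} pair of paths. Since $c=\sum_{1\le i<j\le m}c_{ij}$ where $c_{ij}$ is the number of crossings between $P(a_i)$ and $P(a_j)$, and there are $\binom{m}{2}=\frac{m(m-1)}{2}$ such pairs, a uniform bound $c_{ij}\le\lceil\frac{k+1}{2}\rceil\lceil\frac{k+3}{2}\rceil$ gives exactly $c\le\frac{m(m-1)}{2}\lceil\frac{k+1}{2}\rceil\lceil\frac{k+3}{2}\rceil$, which is what we need. So the real content is: two $k$-bend paths cross in at most $\lceil\frac{k+1}{2}\rceil\lceil\frac{k+3}{2}\rceil$ points.

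To establish this last bound I would split on the parity of $k$. When $k=2j-1$ is odd, this is precisely Lemma~\ref{lem:pnt}: two $(2j-1)$-bend paths cross in at most $j(j+1)$ points, and indeed $\lceil\frac{k+1}{2}\rceil=j$, $\lceil\frac{k+3}{2}\rceil=j+1$, so the formula matches. When $k=2j$ is even, a $k$-bend path is in particular a $(k+1)$-bend path (one may append a length-zero or trivial final segment, or simply note that adding a bend only enlarges the class), so the number of crossings is at most that of two $(2j+1)$-bend paths, which by Lemma~\ref{lem:pnt} with $j+1$ in place of $j$ is $(j+1)(j+2)$; and for $k=2j$ we have $\lceil\frac{k+1}{2}\rceil=j+1$ and $\lceil\frac{k+3}{2}\rceil=j+2$, again matching. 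Thus in both cases $c_{ij}\le\lceil\frac{k+1}{2}\rceil\lceil\frac{k+3}{2}\rceil$.

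Putting the pieces together: $2c\le m(m-1)\lceil\frac{k+1}{2}\rceil\lceil\frac{k+3}{2}\rceil$, and substituting into Lemma~\ref{lem:c-inequality} gives $n(2m-k-2)\le m(m-1)\lceil\frac{k+1}{2}\rceil\lceil\frac{k+3}{2}\rceil+2(k+1)m$, as claimed. I do not anticipate any genuine obstacle here, since both ingredients are already available; the only mild subtlety is the even-$k$ case, where one must be slightly careful that passing from a $2j$-bend path to a $(2j+1)$-bend path is legitimate for the purpose of counting crossings (it is, because every $2j$-bend path is also a $(2j+1)$-bend path, and the maximum in Lemma~\ref{lem:pnt} is over all pairs). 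It is worth remarking that this even-$k$ step is presumably not tight, which is exactly why Question~\ref{quest:cross} is raised; a sharper answer there would immediately sharpen Lower-Bound-Lemma~II for even $k$.
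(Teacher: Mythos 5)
Your proof is correct and follows exactly the paper's own argument: apply Lemma~\ref{lem:c-inequality}, bound $c$ by $\binom{m}{2}$ times the maximum number of crossings of two $k$-bend paths, and obtain that maximum from Lemma~\ref{lem:pnt}, passing to $(k+1)$-bend paths when $k$ is even. No differences worth noting.
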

\begin{proof}
 By Lemma~\ref{lem:pnt} two $k$-bend paths can cross in at most $\lceil\frac{k+1}{2}\rceil\lceil\frac{k+3}{2}\rceil$ points. (If $k$ is even we simply take the bound for two $(k+1)$-bend paths.) Hence $m$ $k$-bend paths cross in at most $\binom{m}{2}\lceil\frac{k+1}{2}\rceil\lceil\frac{k+3}{2}\rceil$ points. Plugging this into Lemma~\ref{lem:c-inequality} gives the claimed inequality.
\end{proof}

We think that the bound on the number of crossings between $m$ $k$-bend paths can be further improved. In particular we conjecture that for odd $k$ no three $k$-bend paths can pairwise cross in $\frac{k+1}{2}\frac{k+3}{2}$ points. This would imply that for odd $k$ the blown-up pretzel maximizes the total number of crossings between $m$ $k$-bend paths.

\begin{thm}\label{thm:kmm4}
 We have $b(K_{m,n})=2m-2$ for all $n>m^4-2m^3+5m^2-4m$. Note that this leaves only a quadratic discrepancy to the bound in Theorem~\ref{thm:m4}.
\end{thm}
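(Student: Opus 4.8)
The plan is to combine the already-established upper bound $b(K_{m,n}) \le 2m-2$ with a matching lower bound obtained by specializing the Lower-Bound-Lemma~II (Lemma~\ref{lem:lower-bound-2}) to $k = 2m-3$. For the upper bound I would simply invoke $b(K_{m,n}) \le 2m-2$ of Biedl and Stern~\cite{Bie-10} (alternatively, the bound $b(G) \le 2\tw(G)-2$ from Section~\ref{sec:tw} together with $\tw(K_{m,n}) = m$ for $3 \le m \le n$). So the actual content is to show $b(K_{m,n}) \ge 2m-2$ whenever $n > m^4 - 2m^3 + 5m^2 - 4m$.

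Suppose, towards a contradiction, that $b(K_{m,n}) \le 2m-3$. Then $K_{m,n}$ has a $(2m-3)$-bend representation, and as in the proof of Lemma~\ref{lem:low} we may assume that every path has exactly $k := 2m-3$ bends (a path with fewer bends can be extended into free grid space without creating new edge-intersections). Now plug $k = 2m-3$ into the inequality of Lemma~\ref{lem:lower-bound-2}, using $2m - k - 2 = 1$, $\lceil\frac{k+1}{2}\rceil = m-1$, $\lceil\frac{k+3}{2}\rceil = m$, and $2(k+1)m = 4m^2-4m$. This yields
\[
 n \;\le\; m(m-1)\,(m-1)\,m + (4m^2-4m) \;=\; m^2(m-1)^2 + 4m^2 - 4m \;=\; m^4 - 2m^3 + 5m^2 - 4m,
\]
contradicting the hypothesis $n > m^4 - 2m^3 + 5m^2 - 4m$. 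Hence $b(K_{m,n}) \ge 2m-2$, which together with the upper bound gives $b(K_{m,n}) = 2m-2$.

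There is no genuine obstacle here beyond bookkeeping: the argument is a direct substitution into Lemma~\ref{lem:lower-bound-2}. The only two points that deserve a word of care are the normalization to exactly $2m-3$ bends (so that the Lower-Bound-Lemma, whose proof counts $k$ bends per path, applies verbatim) and the evaluation of the two ceiling terms, since it is the product $m(m-1)\cdot(m-1)m$ that produces the leading term $m^4 - 2m^3 + m^2$ of the bound. Finally I would close, as the theorem statement indicates, by noting that Theorem~\ref{thm:m4} exhibits a $(2m-3)$-bend representation of $K_{m,n}$ for $n \le m^4 - 2m^3 + \tfrac{5}{2}m^2 - 2m - 4$, so the true threshold $n^\ast$ below which $b(K_{m,n}) < 2m-2$ is now confined to an interval of length less than $\tfrac{5}{2}m^2$.
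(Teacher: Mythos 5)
Your proof is correct and follows essentially the same route as the paper: assume $b(K_{m,n})\leq 2m-3$, substitute $k=2m-3$ into the Lower-Bound-Lemma~II to get $n\leq m^2(m-1)^2+4m^2-4m=m^4-2m^3+5m^2-4m$, and combine with the known upper bound $b(K_{m,n})\leq 2m-2$. The only cosmetic difference is that the paper disposes of the degenerate case $m=1$ separately, which your contradiction framing handles vacuously anyway.
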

\begin{proof}
 If $m=1$, then $K_{m,n}$ is a star and thus an interval graph, i.e., $b(K_{1,n}) = 0$ for all $n > 0$.

 For $m > 1$ assume that $b(K_{m,n}) \leq 2m-3$. Then by the Lower-Bound-Lemma~II with $k = 2m-3$ we get $n \leq 2\binom{m}{2}(m-1)m + 2(2m-2)m = m^4 -2m^3 +5m^2-4m$.

 On the other hand it is known~\cite{Gol-09} and illustrated in Figure~\ref{fig:kmN} that $b(K_{m,n}) \leq 2m-2$, regardless of $n$.
\end{proof}

\begin{figure}[htb]
 \centering
 \includegraphics{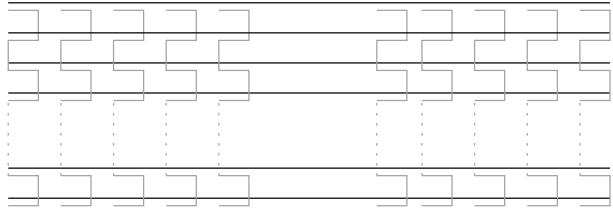}
 \caption{A representation verifying $b(K_{m,n})\leq 2m-2$.}
 \label{fig:kmN}
\end{figure}


Applying all the above machinery together with the $2$-bend representation of $K_{3,10}$ in Figure~\ref{fig:k35} we obtain that $b(K_{3,n})$ is $1$ if $n\leq 2$, $2$ if $3\leq n\leq 10$, $3$ if $11\leq n\leq 39$ and $4$ if $n\geq 61$. The first unknown case is $3\leq b(K_{3,40})\leq 4$.

\section{Degeneracy}\label{sec:acy}

In~\cite{Bie-10} an acyclic orientation of $G$ with maximum indegree $k$ is referred to as a \emph{$k$-regular acyclic orientation}. The \emph{degeneracy} $\dg(G)$ of $G$ is the smallest number $k$, such that $G$ has a $k$-regular acyclic orientation, see~\cite{Lic-70}. In this section we provide a tight upper bound on the bend-number of graphs with a fixed degeneracy. In~\cite{Bie-10} the following result was suspected to be true.


\begin{thm}\label{thm:acy}
 Every graph $G$ has a $(2\dg(G)-1)$-bend representation.
\end{thm}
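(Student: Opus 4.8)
The plan is to fix a $k$-regular acyclic orientation of $G$ with $k=\dg(G)$ and build an EPG representation in which every vertex is represented by a staircase path with $2k-1$ bends. Order the vertices $v_1,\dots,v_n$ according to a topological order of the orientation, so that each $v_i$ has at most $k$ out-neighbours among $v_1,\dots,v_{i-1}$ (I take the orientation so edges point to earlier vertices). The idea is to process the vertices in this order and, when introducing $P(v_i)$, let it reach ``down'' to grab a private grid-edge on each of the at most $k$ paths of its (already-placed) out-neighbours, while keeping a fresh horizontal and vertical segment available for future vertices to grab from $P(v_i)$. A staircase with $2k-1$ bends has $2k$ segments: I would like to dedicate roughly $k$ segments of $P(v_i)$ to realizing the edges to its out-neighbours and the remaining segments to serving as a ``landing strip'' on which the in-neighbours of $v_i$ (processed later) can each find a private grid-edge. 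So the accounting to make work is: $k$ segments to go down to out-neighbours plus some segments to be available later, totalling $2k$ segments, i.e. $2k-1$ bends.

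Concretely, I would lay out a coarse diagonal staircase skeleton: reserve for each vertex $v_i$ a ``column block'' and a ``row block'' of consecutive grid-lines, arranged along the main diagonal so that block $i$ lies above and to the right of block $j$ whenever $j<i$. When placing $P(v_i)$, it starts in its own row block with a long horizontal segment, then alternately drops and moves left, visiting in decreasing order the column blocks of its out-neighbours; at the column block of an out-neighbour $v_j$ the path $P(v_i)$ runs a short vertical subsegment inside (a private part of) a horizontal segment of $P(v_j)$ — or rather alongside, creating exactly one shared grid-edge — and similarly reuses horizontal pieces. The point is that $P(v_j)$, when it was built, left a whole horizontal segment and a whole vertical segment inside its own row/column block untouched precisely so that later neighbours can each take a distinct grid-edge there; since $v_j$ has at most $n$ later in-neighbours this is fine as long as those segments are long enough, which we can guarantee by scaling. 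Non-adjacent pairs must not share a grid-edge: vertices $v_i,v_{i'}$ that are non-adjacent but whose paths pass through a common block only meet at grid-points, not grid-edges, by offsetting everything by one unit per vertex — this is the standard ``staircases only touch at corners'' trick used already in the proof of Theorem~\ref{thm:cli}.

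The delicate part — and the reason the bound is $2\dg(G)-1$ rather than something larger — is the segment budget. Going from the starting row block down to $k$ out-neighbour column blocks and back to a final ``exit'' segment naively costs more than $2k$ segments if each out-neighbour visit uses both a horizontal and a vertical detour; the key optimization is that consecutive out-neighbour visits can share turning points, so that $k$ descents are interleaved with $k-1$ leftward moves, and the first segment (in the home row block) and the last segment (in the home column block, reaching back up or down) double as the landing strips for future in-neighbours. That gives $1 + (k-1) + (k-1) + 1 = 2k$ segments when counted carefully, i.e. $2k-1$ bends, and one must check that a staircase with exactly this many bends has its first and last segments perpendicular in the right way so both a private horizontal and a private vertical grid-edge remain available on $P(v_i)$ for every later in-neighbour. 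I would verify this by exhibiting the explicit coordinate formula for the bends of $P(v_i)$ in terms of the block positions of $v_i$ and its out-neighbours, analogous to the explicit formula in the proof of Theorem~\ref{thm:cli}, and then checking the three conditions (all required edges realized, no forbidden edge realized, at most $2k-1$ bends) directly from the formula.

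The main obstacle I expect is precisely this bookkeeping: ensuring that a single path of $2k-1$ bends simultaneously (i) makes an edge with each of up to $k$ earlier out-neighbours using the ``downward'' half of the staircase and (ii) exposes enough private segment-length for an a priori unbounded number of later in-neighbours to attach — the latter is what forces the careful choice of which two segments of $P(v_i)$ serve as landing strips and how the blocks are sized. Tightness of the bound (claimed in the section intro, and to be shown even for bipartite graphs, e.g. via $K_{m,n}$ with $\dg = m$ and the Lower-Bound-Lemmas, or via a separate construction) is presumably handled in a later lemma and I would not reprove it here.
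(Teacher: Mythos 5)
Your strategy coincides with the paper's at the top level: process the vertices in a degeneracy order and maintain, for every placed vertex, a private (``displaying'') horizontal and vertical subsegment onto which later paths can attach. But the step you yourself flag as ``the main obstacle'' --- the segment budget --- is exactly the step your argument does not close, and the arithmetic you give does not close it. A path with $2k-1$ bends has $2k$ segments, $k$ of each orientation. An edge-intersection requires two \emph{parallel} overlapping segments, so if every out-neighbour of $v_i$ is to be reached through a vertical landing strip in its column block, $P(v_i)$ needs $k$ vertical subsegments in $k$ distinct column blocks; with the $k-1$ horizontal connectors between them and a first horizontal segment in the home row block, that is already $1+k+(k-1)=2k$ segments, leaving no private vertical segment for $v_i$ itself --- yet your invariant requires one, since the in-neighbours of $v_i$ processed later must find a vertical strip of $P(v_i)$ to descend onto. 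Your count $1+(k-1)+(k-1)+1=2k$ silently drops one descent, i.e., it realizes only $k-1$ of the $k$ required edges. (Relatedly, ``a short vertical subsegment inside a horizontal segment of $P(v_j)$ \dots creating exactly one shared grid-edge'' cannot happen: perpendicular segments meet in at most a grid-point, never a grid-edge.)

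The paper escapes this with an asymmetric move that is absent from your write-up, and which is the reason the invariant carries \emph{both} a vertical subsegment $s(w)$ and a horizontal subsegment $\bar{s}(w)$ per vertex. Only $k-1$ of the neighbours $v_1,\dots,v_k$ are reached through their \emph{vertical} displayed subsegments, by a snake connecting $s(v_1),\dots,s(v_{k-1})$ with $2(k-1)-2=2k-4$ bends; the last neighbour $v_k$ is reached through its \emph{horizontal} displayed subsegment $\bar{s}(v_k)$, by appending three further segments whose final horizontal one lies inside $\bar{s}(v_k)$. This totals $2k-4+3=2k-1$ bends, the appended vertical segment (which lies in no neighbour's block but sees all previous vertical strips) becomes the new $s(v)$, and any horizontal segment of $P(v)$ supplies $\bar{s}(v)$. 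If you add this asymmetric treatment of one distinguished neighbour, your construction becomes the paper's; without it, a $(2k-1)$-bend staircase cannot simultaneously realize $k$ vertical attachments and reserve a fresh vertical strip.
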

\begin{proof}
 Take a $\dg(G)$-regular acyclic orientation of $G$ in the above sense. A topological ordering then gives us a building recipe for $G$, where every new vertex will be connected to at most $\dg(G)$ vertices of the already constructed part. We construct a $(2\dg(G)-1)$-bend representation simultaneously to the building process of $G$. We maintain the following invariant for the $(2\dg(G)-1)$-bend representation of the already constructed subgraph $G'$ of $G$: For every vertex $v\in V(G')$ the path $P(v)$ contains a vertical subsegment $s(v)$ and a horizontal subsegment $\bar{s}(v)$ both displaying $v$. Moreover all the vertical subsegments $s(v)$ for $v\in V(G')$ see each other. In the first step, when $G'$ is just a single vertex $v$, this invariant holds by taking any $1$-bend path for $v$.

 Now when the next vertex $v$ is added to $G'$, consider the $k$ neighbors of $v$ in $G'$. In particular $k \leq \dg(G)$. Let these neighbors be labeled $v_1,\ldots,v_k$. We define the grid path $P(v)$ for $v$ as follows: We start with $P(v)$ being any grid path connecting $s(v_1),\ldots,s(v_{k-1})$ and extend the last vertical segment of $P(v)$ by one grid-edge, so that its end-point has a different $y$-coordinate than its bends. (If $k = 1$, we define $P(v)$ to be a single grid-point on a grid-line that intersects all the $s(v_i)$.) Next, we extend $P(v)$ at its end-point by three further segments so that the last horizontal segment is completely contained in $\bar{s}(v_k)$. We refer to Figure~\ref{fig:col} for an example of this construction. (If $k=0$, we define $P$ to be any $1$-bend path whose vertical segment sees all the $s(v_i)$.)

 \begin{figure}[htb]
  \psfrag{v}[cc][bl]{$s(v)$}
  \psfrag{v'}[cc][bl]{$\bar{s}(v)$}
  \psfrag{1}[cc][bl]{$s(v_1)$}
  \psfrag{2}[cc][bl]{$s(v_2)$}
  \psfrag{3}[cc][bl]{$s(v_3)$}
  \psfrag{4}[cc][bl]{$s(v_4)$}
  \psfrag{4'}[cc][bl]{$\bar{s}(v_4)$}
  \centering
  \includegraphics{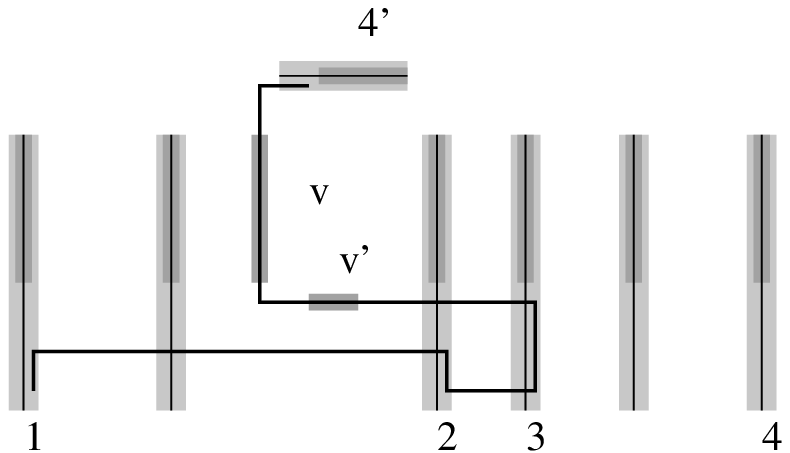}
  \caption{Building a $(2\dg(G)-1)$-bend representation of $G$, a path $P(v)$ is inserted. Vertex-displaying subsegments before and after the insertion are highlighted with light-grey and dark-grey, respectively.}
  \label{fig:col}
 \end{figure}

 The path $P(v)$ has $2k-2 + 3 \leq 2\dg(G)-1$ bends and is edge-intersecting exactly the paths corresponding to $v_1,\ldots,v_k$. Moreover, the last vertical segment $s$ of $P(v)$ displays $v$ and sees all previous vertical subsegments. Thus for each vertex $w$ in $G' \cup v$ we can find a vertical subsegment $s(w)$ as required in our invariant. Finally, a horizontal subsegment for $v_k$ and $v$ can be chosen as a subset of the previous $\bar{s}(v_k)$ and any horizontal segment of $P(v)$, respectively. Thus our invariant is maintained for the new subgraph $G' \cup v$, which proves the theorem.
\end{proof}

Next, we show that Theorem~\ref{thm:acy} is worst-case optimal, even for bipartite graphs.

\begin{thm}
 For every $m$ there is a bipartite $G$ with $\dg(G)\leq m$ and $b(G)\geq 2\dg(G)-1$.
\end{thm}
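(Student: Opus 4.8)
The plan is to exhibit, for each $m$, an explicit bipartite graph $G$ of degeneracy at most $m$ for which no $(2m-2)$-bend representation exists, so that combined with Theorem~\ref{thm:acy} we get $b(G)=2\dg(G)-1=2m-1$. The natural candidate is a complete bipartite graph $K_{m,n}$ with $n$ chosen large: it is bipartite, and since every vertex on the small side has degree $n$ while every vertex on the large side has degree $m$, orienting all edges from the $B$-side to the $A$-side gives an acyclic orientation of maximum indegree $m$, so $\dg(K_{m,n})\le m$ (and in fact equals $m$ once $n\ge m$). Thus it suffices to find, for each $m$, a value of $n$ with $b(K_{m,n})\ge 2m-1$. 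But $b(K_{m,n})\le 2m-2$ always holds by the construction recalled in Figure~\ref{fig:kmN}, so $K_{m,n}$ alone can never reach $2m-1$; the graph must therefore be a disjoint union or a slightly more clever bipartite gadget.

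The fix I would use is to take a disjoint union of many copies of $K_{m,n}$, or rather to observe that the lower bounds already proved are on single complete bipartite graphs and do not combine across components — so instead I would aim for $b(K_{m,n})=2m-2$ exactly (which Theorem~\ref{thm:kmm4} gives for $n>m^4-2m^3+5m^2-4m$) and then push degeneracy down by one. Concretely: set $H=K_{m+1,n}$ with $n$ large enough that $b(H)=2(m+1)-2=2m$. Then $\dg(H)=m+1$, and $2\dg(H)-1=2m+1$, which is strictly larger than $b(H)=2m$ — so this does not work either. The honest route is that $b(K_{m,n})$ tops out at $2m-2=2\dg-2$, one short of the bound, so complete bipartite graphs are \emph{not} extremal for Theorem~\ref{thm:acy}; one really needs a different construction. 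I would therefore build $G$ by taking $K_{m-1,n}$ (which has degeneracy $m-1$ and, for huge $n$, bend-number $2m-4$ by Theorem~\ref{thm:kmm4}) and augmenting it with one extra vertex adjacent to one vertex of each $A$-part, iterated, so that the degeneracy stays at $m$ but the forced conflicts among the many $K$-gadgets pile up; the key mechanism to exploit is the counting argument behind the Lower-Bound-Lemma~I (Lemma~\ref{lem:low}): in any EPG representation, the number of segment-endpoints and grid-lines available is limited, and a graph assembled from sufficiently many pairwise-interacting dense bipartite pieces overloads this budget at the level $2m-2$.

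The cleanest concrete plan: let $G$ be the bipartite graph obtained from a large complete bipartite graph $K_{m,N}$ by subdividing nothing but instead \emph{gluing} copies along the $A$-side — i.e. take $t$ disjoint sets $B_1,\dots,B_t$ each of size $N$ and a single $A=\{a_1,\dots,a_m\}$, with every $a_i$ adjacent to every vertex in every $B_j$; this is again just $K_{m,tN}$, so I am back to the same obstruction. The real construction must block the $2m-2$ representation of Figure~\ref{fig:kmN}, which crucially uses that the $A$-paths can be laid out as $2m-3$-bend snakes with all intersections on distinct segments. I would add a second small side $A'=\{a'_1,\dots,a'_m\}$ with a perfect matching (or a few edges) between $A$ and $A'$, plus another huge $B'$ complete to $A'$, chosen so that any representation must devote most of the bends of each $A$-path to the $B$-side and each $A'$-path to the $B'$-side while the $A$–$A'$ edges force the two bundles to interleave, destroying the segment budget; a careful application of Lemma~\ref{lem:low} (or Lemma~\ref{lem:c-inequality}) to each half then yields $b(G)\ge 2m-1$. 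I expect the main obstacle to be exactly this: showing that adding the linking edges genuinely forces $2m-1$ bends rather than being absorbable, which will require re-running the endpoint/grid-line counting argument of Lemma~\ref{lem:low} on the combined structure and checking that the extra vertices cannot be "parked" cheaply. Verifying $\dg(G)\le m$ throughout will be routine (orient everything toward the two small sides and toward $A'$ from $A$), so the whole weight of the proof sits in the lower bound via the counting lemmas established above.
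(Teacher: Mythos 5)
Your proposal correctly identifies the central difficulty --- that $b(K_{m,n})\leq 2m-2$ always, so no complete bipartite graph can be extremal and some augmentation is required --- but it never arrives at a working construction, and you explicitly leave the decisive lower bound unproven (``the main obstacle \dots will require re-running the endpoint/grid-line counting argument''). That is the entire content of the theorem, so as it stands there is a genuine gap. Moreover, the specific gadget you sketch (a second small side $A'$ matched to $A$, with its own huge $B'$) is unlikely to work as described: the matching edges $a_ia'_i$ can each be realized on a single shared grid-edge at the far end of one segment of $P(a_i)$ without consuming any additional bends, so the two bundles need not ``interleave'' in any way that overloads the segment budget. The counting in Lemma~\ref{lem:low} and Lemma~\ref{lem:c-inequality} is tailored to complete bipartite interaction and does not directly penalize a sparse linking structure.

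The paper's actual construction is a \emph{layered} one, and the key mechanism is pigeonhole on bends rather than a global counting inequality. Starting from $K_{m,n}$ with $n$ large, one adds a set $C$ containing, for \emph{every} $m$-subset $B'$ of $B$, exactly $m(2m-2)+1$ new vertices whose neighborhood is $B'$; then a set $D$ containing, for every $m$-subset $C'$ of $C$, another $m(2m-2)+1$ vertices with neighborhood $C'$. Degeneracy stays at $m$ since each added vertex has exactly $m$ neighbors among earlier vertices. Assuming a $(2m-2)$-bend representation, the crossing bound of Lemma~\ref{lem:pnt} forces all but $O(m^4)$ of the paths for $B$ to realize their $m$ edge-intersections on every second segment, i.e., to look like Figure~\ref{fig:kmN}; this makes all $B$--$C$ edge-intersections vertical, say, and yields $m$-subsets $B_1,\dots,B_m$ with $B_1$ entirely to the left of the rest. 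Since each $B_i$ has $m(2m-2)+1$ private neighbors in $C$ --- one more than the total number of bends on the paths for $B_i$ --- some $c_i$ has \emph{all} its vertical segments trapped inside segments of paths for $B_i$. Repeating the pigeonhole once more with $D'\subset D$ over $C'=\{c_1,\dots,c_m\}$ forces a path all of whose horizontal segments lie inside paths for $C'$, which is impossible because $P(c_1)$ sits strictly to the left of the others. If you want to salvage your write-up, the ingredient to import is this ``one more vertex than there are bends'' pigeonhole applied twice over all $m$-subsets, not a refinement of the Lower-Bound-Lemmas.
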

\begin{proof}
 The graph $G$ arises from a $K_{m,n}$ with large enough $n = |B|$, which will be determined later. First, for every $m$-subset $B'$ of $B$ we add $m(2m-2)+1$ new vertices, each having $B'$ as its neighborhood. The set of the $(m(2m-2)+1)\binom{n}{m}$ added vertices is denoted by $C$. Moreover, for every $m$-subset $C'$ of $C$ we add $m(2m-2)+1$ new vertices, each having $C'$ as its neighborhood. This set of vertices is denoted by $D$.

 Clearly, $\dg(G) \leq m$. Now suppose $b(G) \leq 2m-2$ and consider the $K_{m,n}$ induced by $A$ and $B$. Indeed by Lemma~\ref{lem:pnt} there are at most $\binom{m}{2}m(m+1)$ crossings between the $m$ $(2m-2)$-bend paths for $A$. Every crossing can lie on at most two paths for $B$. Moreover, the paths for $A$ have only $m(4m-4)$ endpoints and every such endpoint lies on at most one path for $B$. It follows that for at least $n-4m^4$ vertices $b \in B$ the $m$ edge-intersections of $P(b)$ with paths for $A$ appear on exactly every second segment of $P(b)$. By this with increasing $n$ the paths for an arbitrarily large subset $\widetilde{B} \subset B$ must look like in Figure~\ref{fig:kmN}, i.e., all edge-intersections of paths for $\widetilde{B}$ with paths for $C$ have the same orientation, say vertical. Additionally, there are many pairs $B_1,B_2$ of $m$-subsets of $\widetilde{B}$ with the property that every path for $B_1$ lies to the left of every path for $B_2$.

 We fix $m$ distinct $m$-subsets $B_1,\ldots,B_m$ of $\widetilde{B}$, such that every path for $B_1$ lies to the left of every path for $B_2 \cup \ldots \cup B_m$. Hence the path for every vertex $c \in C$ whose neighborhood is $B_1$ lies completely to the left of every path for a vertex in $C$ whose neighborhood is one of $B_2, \ldots, B_m$.

 \begin{figure}[htb]
  \psfrag{c}[cc][cc]{$P(c_1)$}
  \psfrag{c2}[cc][cc]{$P(c_2)$}
  \psfrag{c3}[cc][cc]{$P(c_3)$}
  \centering
  \includegraphics{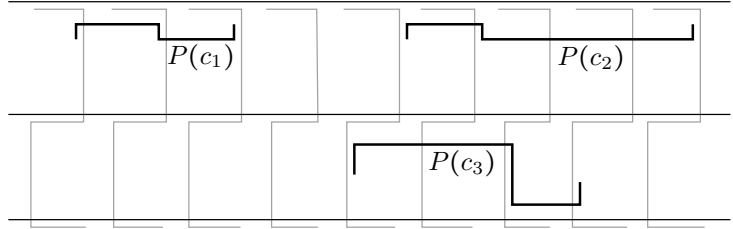}
  \caption{A part of a hypothetical $(2\dg(G)-2)$-bend representation of $G$. The paths for the sets $\widetilde{B}$ and $\widetilde{C} = \{c_1,c_2,c_3\}$ are depicted grey and thick, respectively.}
  \label{fig:col-tight}
 \end{figure}

 Now for every $B_i$ there are $m(2m-2)+1$ vertices in $C$ whose neighborhood is $B_i$, that is, one more than there are bends in the paths for $B_i$. Since every bend of a path for $B_i$ lies on at most one path for $C$, there is for every $i\in\{1,\ldots,m\}$ at least one vertex $c_i \in C$ whose neighborhood is $B_i$ and for which the vertical segments of $P(c_i)$ are completely contained in segments of path for $B_i$ (see Figure~\ref{fig:col-tight} for an example). In $G$ there is a set $D'\subset D$ of $m(2m-2)+1$ vertices with neighborhood $C' := \{c_1,\ldots,c_m\}$; again one more than there are bends in $C'$. Hence of at least one path for a vertex in $D'$ all \emph{horizontal} segments are contained in segments of paths for $C'$. But this is impossible since the path $P(c_1)$ lies to the left of the paths for all other vertices in $C'$.
\end{proof}

In~\cite{Bie-10} a (not necessarily acyclic) orientation of $G$ with maximum indegree $k$ is referred to as a \emph{$k$-regular orientation}. The \emph{pseudo-arboricity} $\pa(G)$ of $G$ is the smallest number $k$, such that $G$ has a $k$-regular orientation, see~\cite{Pic-82}. In~\cite{Bie-10} it is shown that $b(G)\leq 2\pa(G)+1$. 

\begin{quest}\label{quest:pa}
 What is the maximal bend-number of graphs with bounded pseudo-arboricity?
\end{quest}


\section{Treewidth}\label{sec:tw}
According to~\cite{Pat-86} a \emph{$k$-tree} is a graph that can be constructed starting with a $(k+1)$-clique and in every step attaching a new vertex to a $k$-clique of the already constructed graph. The \emph{treewidth} $\tw(G)$ of a graph $G$ is the minimum $k$ such that $G$ is a subgraph of some $k$-tree~\cite{Rob-85}.

In~\cite{Gol-09} Golumbic, Lipshteyn, and Stern show, that every graph $G$ with $\tw(G) \leq 1$, i.e., every forest, has $b(G) \leq 1$.  In~\cite{Hel-12} it is shown that graphs of treewidth at most $2$ have bend-number at most $2$. In this section we present a tight bound for the maximum bend-number of graphs of bounded treewidth.

To judge the upper bounds of the bend-number in terms of the bounded treewidth, we remark, that there is a graph $G$ with $\tw(G) = 1 = b(G)$, i.e. the bound for the class of graphs of treewidth 1 is tight. The general lower bound is a simple corollary of a previously mentioned theorem.
\begin{cor}\label{prop:bend-tw-low}
 For every $k\geq 2$ there is a graph $G$ with $\tw(G) = k$ and $b(G) = 2\cdot \tw(G) - 2$.
\end{cor}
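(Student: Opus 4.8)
The plan is to use complete bipartite graphs as the extremal examples, exactly as the phrase ``a simple corollary of a previously mentioned theorem'' suggests: the relevant theorem is Theorem~\ref{thm:kmm4}. Concretely, for a given $k\geq 2$ I would set $G:=K_{k,n}$ with $n:=k^4-2k^3+5k^2-4k+1$ (any larger $n$ works just as well), and then verify the two required equalities separately.

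For the treewidth, I would argue $\tw(K_{k,n})=k$ whenever $n\geq k+1$. The upper bound is witnessed by the path-like tree decomposition whose bags are $\{a_1,\dots,a_k\}\cup\{b_j\}$ for $j=1,\dots,n$, each of size $k+1$. For the lower bound I would exhibit $k+1$ pairwise disjoint, connected, pairwise touching subgraphs, namely $H_i:=\{a_i,b_i\}$ for $i=1,\dots,k$ together with $H_{k+1}:=\{b_{k+1}\}$: each $H_i$ with $i\leq k$ is an edge, $H_{k+1}$ is a single vertex, they are vertex-disjoint, and any two of them are joined by an edge of $K_{k,n}$ of the form $a_ib_j$. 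By the Helly property of subtrees of a tree, in any tree decomposition the nodes whose bags meet a given $H_i$ form a subtree, these $k+1$ subtrees pairwise intersect, and hence share a common node, whose bag must contain a vertex of each $H_i$ and thus have size at least $k+1$; so the width is at least $k$. (Alternatively one may just cite the well-known identity $\tw(K_{m,n})=\min\{m,n\}$.)

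For the bend-number, the work is already done: since $n>k^4-2k^3+5k^2-4k$, Theorem~\ref{thm:kmm4} gives $b(K_{k,n})=2k-2$. Combining the two computations yields $\tw(G)=k\geq 2$ and $b(G)=2k-2=2\tw(G)-2$, as claimed. There is no real obstacle here: the genuinely hard ingredient, the lower bound $b(K_{m,n})=2m-2$ for $n$ polynomially large in $m$, is precisely what Theorem~\ref{thm:kmm4} supplies, and everything else is routine. The only point to double-check is that the chosen $n$ simultaneously satisfies $n\geq k+1$ (so that $\tw(K_{k,n})=k$ rather than $n$) and $n>k^4-2k^3+5k^2-4k$ (so that Theorem~\ref{thm:kmm4} applies), which it clearly does for every $k\geq 2$.
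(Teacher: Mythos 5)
Your proposal is correct and follows exactly the paper's own argument: take $K_{k,n}$ with $n$ large enough that Theorem~\ref{thm:kmm4} forces $b(K_{k,n})=2k-2$, and observe that $\tw(K_{k,n})=k$. The only difference is that you spell out the treewidth computation (which the paper dismisses with ``clearly''), and your verification of both the upper and lower bounds there is accurate.
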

\begin{proof}
 Theorem~\ref{thm:kmm4} states, that the complete graph $K_{m,n}$ with $n\gg m$  has bend-number $2m-2$. Clearly this graph has treewidth $m$.
\end{proof}

\begin{thm}\label{thm:twUpperBound}
 Let $k\geq 2$. For every graph $G$ with $\tw(G) \leq k$ we have $b(G) \leq 2k - 2$.
\end{thm}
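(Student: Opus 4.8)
The plan is to mimic the degeneracy construction of Theorem~\ref{thm:acy}, but exploit the stronger structure of a $k$-tree so that new vertices cost only $2k-2$ bends rather than $2\dg(G)-1$. Since $G$ is a subgraph of a $k$-tree, it suffices to represent a $k$-tree $H\supseteq G$ and then delete grid-edges of the intersections corresponding to non-edges of $G$; so from now on assume $G$ is a $k$-tree. Recall that $H$ is built from a $(k+1)$-clique $K$ by repeatedly attaching a new vertex to a $k$-clique of the current graph. First I would fix such a construction order $v_1,\dots,v_n$ and build the representation incrementally, maintaining an invariant of the same flavour as in Theorem~\ref{thm:acy}: every vertex $v$ currently present carries a vertical subsegment $s(v)$ and a horizontal subsegment $\bar s(v)$, each displaying $v$; moreover — and this is the extra structural content — for every $k$-clique $C$ that has been used (or could be used) as an attachment clique, the $k$ vertical subsegments $\{s(u):u\in C\}$ pairwise see each other and in fact can be pierced by two common vertical grid-lines, so that a snake connecting them has exactly $2k-2$ bends.

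The base case is the initial $(k+1)$-clique $K$. I would represent $K$ directly using the edge-clique-cover machinery of Section~\ref{sec:cli}: a single clique is an edge clique cover with $cl_g=1$, so by Theorem~\ref{thm:cli} (or just by hand) $K$ has a $0$-bend representation — all vertices as parallel horizontal segments on a common vertical line. From this one extracts, for each $k$-subclique $C\subset K$, vertical displaying subsegments seeing each other along a common pair of vertical lines; a small amount of local surgery (bending the ends of the $k+1$ horizontal segments downward into short vertical stubs) produces the required vertical subsegments $s(u)$ and horizontal subsegments $\bar s(u)$ with $0\le$ a few bends, well within budget since $2k-2\ge 2$. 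The inductive step is the heart: when $v$ is attached to a $k$-clique $C=\{v_{i_1},\dots,v_{i_k}\}$, let $P(v)$ be a snake that connects the vertical subsegments $s(v_{i_1}),\dots,s(v_{i_k})$; by the remark at the end of Section~\ref{sec:pre} this snake has exactly $2k-2$ bends and edge-intersects exactly the paths for $C$. Unlike in Theorem~\ref{thm:acy}, no extra three segments are needed, because the $C$-subsegments were prepared to lie between two common vertical lines. It remains to re-establish the invariant: the last (or any) vertical segment of the snake $P(v)$ displays $v$ and can be taken to see all previously used vertical subsegments (route the two bounding vertical lines of this snake far to the right, past everything); this gives $s(v)$, and any horizontal segment of $P(v)$, suitably shortened, gives $\bar s(v)$. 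For $\bar s(v_{i_j})$ with $j<k$ and for the cliques newly created by adding $v$ (each of the form $(C\setminus\{v_{i_j}\})\cup\{v\}$), one checks that the needed subsegments already exist: $v$'s vertical segments are fresh and mutually visible, and each such segment can be lined up with the $s$-subsegments of $C\setminus\{v_{i_j}\}$ by a further translation, which is the bookkeeping one has to do carefully.

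The main obstacle I expect is exactly this bookkeeping of \emph{which} vertical subsegments are required to see each other: a $k$-tree has many $k$-cliques, a vertex can lie in several, and the new vertex $v$ participates in $k$ new $k$-cliques simultaneously, so one must argue that a single snake $P(v)$ can have each of its $k$ vertical segments simultaneously serve as the "$s(v)$" witness aligned with $k$ different $(k-1)$-subsets. The clean way around this is to never commit to a global layout: place the two vertical grid-lines bounding $P(v)$'s snake strictly to the right of all grid-edges used so far, so there is always fresh horizontal room, and when a later vertex needs to connect a clique $C'$ containing $v$, it re-reads a vertical subsegment of $P(v)$ and shifts its own snake's vertical lines yet further right; since only finitely many vertices are added, this terminates. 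Verifying that the horizontal segments of all these nested snakes can be kept edge-disjoint except where intended — i.e., that the representation really induces $H$ and no extra adjacencies — is routine but is where the care goes: one keeps distinct horizontal segments on distinct horizontal grid-lines, exactly as in the proof of Theorem~\ref{thm:cli_max_deg}. With the invariant maintained through all $n$ steps, every $P(v_i)$ has at most $2k-2$ bends (the base clique vertices have far fewer), so $b(H)\le 2k-2$, hence $b(G)\le 2k-2$, completing the proof.
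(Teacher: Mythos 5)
Your high-level plan (build along the $k$-tree construction order, represent each new vertex by a snake connecting prepared subsegments of its attachment clique) is the same as the paper's, but the invariant you propose to maintain is too strong to survive the inductive step, and the place where it breaks is exactly the crux of the theorem. A $(2k-2)$-bend snake has $2k-1$ segments, say $k$ vertical and $k-1$ horizontal. If the new vertex $v$ is adjacent to all $k$ members of its attachment clique $C$ and all the prepared displaying subsegments $s(u)$, $u\in C$, are vertical, then every one of the $k$ vertical segments of $P(v)$ must lie inside some $P(u)$ to realize the $k$ edge-intersections. Consequently no vertical segment of $P(v)$ displays $v$ (each is shared with a neighbour's path), and there is no spare segment to create one within the bend budget. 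Your claim that ``the last (or any) vertical segment of the snake $P(v)$ displays $v$'' is therefore false, and the invariant that every vertex of every $k$-clique carries a vertical displaying subsegment between two common vertical lines cannot be re-established for the $k$ new cliques containing $v$. This is not bookkeeping that can be fixed by pushing grid-lines to the right; it is an honest deficit of one segment.

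The paper's proof resolves precisely this by weakening the invariant to a two-case form (Invariants A and B): after inserting a fully adjacent $v$, the cliques containing $v$ only satisfy the weaker Invariant B, in which one vertex ($v$) is displayed horizontally but not vertically, and the loss is compensated by keeping a subsegment $s(w_1w_2)$ that displays an \emph{edge} of the clique on the same grid-line as $w_1$'s subsegment. A later path attached to such a clique can then meet $w_1$ and $w_2$ with a \emph{single} vertical segment placed inside $s(w_1w_2)$, recovering the segment that would otherwise be missing. Your proposal contains no mechanism of this kind, so the induction does not close. (Two smaller issues: a $0$-bend representation of the initial clique consists of overlapping intervals on one horizontal line, not parallel segments on a common vertical line; and passing from the $k$-tree $H$ to $G\subseteq H$ by ``deleting grid-edges'' of unwanted intersections is not legitimate, since that can disconnect a path --- the paper instead connects $P(v)$ only to the subsegments of its actual neighbours in $G$, which is the easier case anyway because the unused segments can then serve as displays.)
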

\begin{proof}
 The cases $\tw(G)\leq 1,2$ are solved by~\cite{Gol-09,Hel-12}. So let $\tw(G)=k\geq3$.
 Let $\tilde{G}$ be a $k$-tree that is a super-graph of $G$. Then $\tilde{G}$ can be iteratively constructed starting with a $k$-clique and adding one-by-one the remaining vertices to the graph, such that the neighbors of each vertex in the already constructed graph form a $k$-clique in $\tilde{G}$. Similarly to the proof of Theorem~\ref{thm:acy} we construct a $(2k-2)$-bend representation of $G$ along this building sequence of $G$. Again when a new vertex $v$ is added to the already constructed subgraph $G'$ of $G$, then $v$ has at most $k$ neighbors in $G'$. But this time only few $k$-sets of vertices can form the neighborhood of a later added vertex and we know exactly which -- namely the $k$-cliques in $\tilde{G}$.

 We maintain an invariant on the $(2k-2)$-bend representation of the graph $G'$. Loosely speaking, we ensure that for ``almost every'' vertex $w$ we have a vertical and a horizontal subsegment that displays $w$ and crosses the $x$-axis and $y$-axis, respectively. In particular all these vertical subsegments, as well as all the horizontal ones, see each other. More formally, we fix a coordinate cross with origin $o$, horizontal $x$-axis and vertical $y$-axis. We say that a vertex $w$ is \emph{displayed horizontally (vertically)} if there is a horizontal (vertical) subsegment of $P(v)$ that displays $v$ and intersects the $y$-axis ($x$-axis) in an interior point. We require that for every $k$-set $W$ of vertices in $G'$ that forms a $k$-clique in $\tilde{G}$ one of the following invariants holds.

 \begin{description}
  \item[Invariant A:] There exists a vertex $w_1 \in W$ such that every $w \in W \setminus w_1$ is displayed horizontally, and every $w \in W$ is displayed vertically. Moreover, on either side of $o$ there is at least one of the vertical subsegments. This invariant is also satisfied if it holds exchanging horizontal and vertical. It is illustrated in Figure~\ref{fig:bend-tw-up-A}.
  \item[Invariant B:] There exist two adjacent vertices $w_1,w_2 \in W$ such that every $w \in W \setminus w_1$ is displayed horizontally, and every $w \in W \setminus w_2$ is displayed vertically. Moreover, on the same grid-line as the subsegment for $w_1$ lies a subsegment $s(w_1w_2)$ that displays the edge $(w_1,w_2)$.  This invariant is also satisfied if it holds exchanging horizontal and vertical. It is illustrated in Figure~\ref{fig:bend-tw-up-B}.
 \end{description}

 \begin{figure}[htb]
  \centering
  \psfrag{w1}{$w_1$}
  \psfrag{w2}{$w_2$}
  \psfrag{w3}{$w_3$}
  \psfrag{v}[cc][cc]{$P(v)$}
  \includegraphics{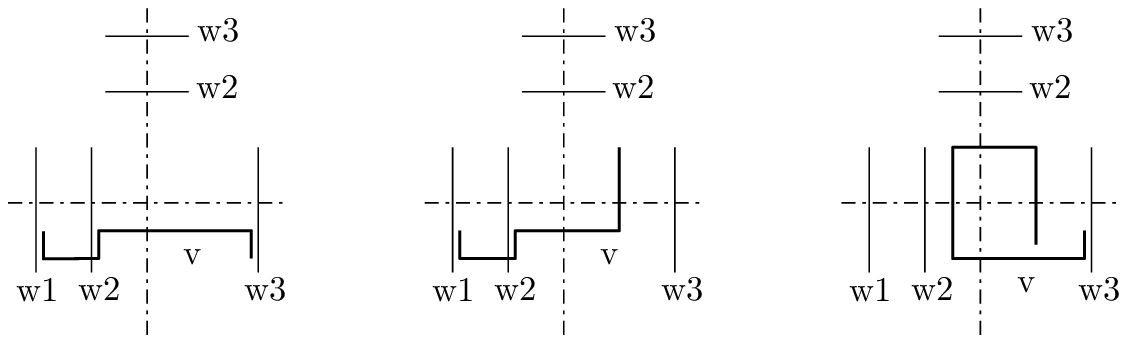}
  \caption{Invariant A for a $k$-clique $\{w_1,w_2,w_3\}$ of $\tilde{G}$ and how to insert the new path $P(v)$ (drawn bold). Left: $v$ is adjacent to every vertex in the $k$-clique. Middle: $v$ not adjacent to at least one vertex in the $k$-clique. Right: $v$ is adjacent only to the unique vertex with a vertical subsegment to the right of $o$.}
  \label{fig:bend-tw-up-A}
 \end{figure}

 When the next vertex $v$ is added to $G'$ in the building sequence of $G$ we define the corresponding grid path $P(v)$ as follows. Let $W$ be the $k$-clique of $\tilde{G}$ that contains the neighborhood of $v$ in $G'$. First suppose that invariant A holds for $W$. We assume w.l.o.g. that every $w \in W$ is displayed vertically and every $w \in W \setminus w_1$ is displayed horizontally. Let $S$ be the set of vertical subsegments that correspond to the neighbors of $v$ in $G'$, display the corresponding vertex, and intersect the $x$-axis. We define $P(v)$ to be a grid path that connects $S$ and lies completely on one side of the $x$-axis. If $|S| = k$, i.e., $v$ is adjacent to every vertex in $W$, then $P(v)$ is a $(2k-2)$-bend path. If $|S| \leq k-1$, we extend $P(v)$ by two segments, such that either segment intersects a coordinate axis. If the vertical subsegments of all but one vertex $w^* \in W$ lie on the same side of $o$ and $v$ is adjacent only to $w^*$, i.e., $|S|=1$, then we extend $P(v)$ by two further segments that intersect the coordinate axes. In particular $P(v)$ is extended by four segments in total. This case is exemplified in the right of Figure~\ref{fig:bend-tw-up-A}. (Here we make implicit use of the assumption $k \geq 3$ to ensure that $P(v)$ has at most $2k-2$ bends.)

 We have obtained a $(2k-2)$-bend representation of $G' \cup v$ and we claim that our invariant holds again. Indeed every vertex different from $v$ is displayed horizontally or vertically if and only if it was so before. Thus the invariant still holds for every $k$-clique not containing the new vertex $v$. Let $W'$ be a $k$-clique of $\tilde{G}$ that contains $v$. Every vertex in $W'\setminus v$ is displayed vertically, and every vertex in $W' \setminus w_1,v$ is displayed horizontally. If $|S|=k$ then $v$ is displayed horizontally since there is at least one subsegment from $S$ on the left and on the right of $o$. Moreover, there is a subsegment displaying every edge of the form $(v,w)$ with $w \in W$. In particular invariant B holds for $W'$.

 If $|S|\leq k-1$ then $v$ is displayed horizontally and vertically due to the additional segments of $P(v)$ that were added last. If $P(v)$ has been extended by four segments, then $P(v)$ is even displayed vertically on both sides of $o$. In both cases there is a vertical subsegment corresponding to a vertex in $W'$ on either side of $o$, which implies that invariant A holds for $W'$.

 \begin{figure}[htb]
  \centering
  \psfrag{w1}{$w_1$}
  \psfrag{w2}{$w_2$}
  \psfrag{w3}{$w_3$}
  \psfrag{s}[cc][cc]{$s(w_1w_2)$}
  \psfrag{v}{$P(v)$}
  \includegraphics{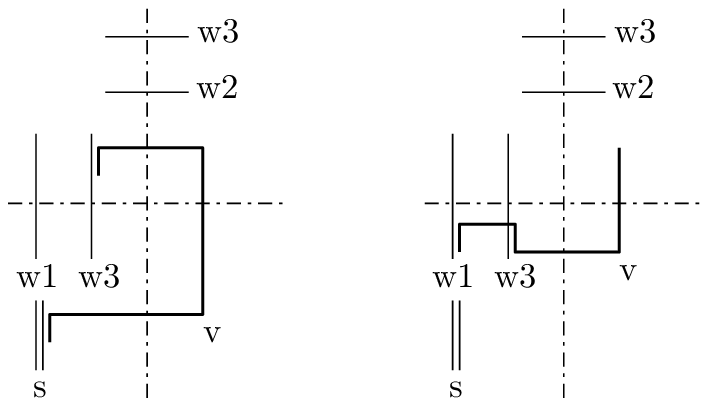}
  \caption{Invariant B for a $k$-clique $\{w_1,w_2,w_3\}$ of $\tilde{G}$ and how to insert the new path $P(v)$ (drawn bold). Left: $v$ is adjacent to $w_1$ and $w_2$. Right: $v$ not adjacent to $w_2$.}
  \label{fig:bend-tw-up-B}
 \end{figure}

 Next, suppose that invariant B holds for $W$. First we consider the case that $v$ is adjacent to $w_1$ and $w_2$. Due to symmetry we may assume that every $w \in W \setminus w_1$ is displayed horizontally, every $w \in W \setminus w_2$ is displayed vertically, and that $s(w_1w_2)$ is vertical and lies below the $x$-axis. Let $S$ be the set of those vertical subsegments that correspond to the neighbors of $v$ different from $w_1,w_2$. Let $P(v)$ be a grid path that connects $S$ and lies completely above the $x$-axis. We define a short vertical segment $s$ that is completely contained in $s(w_1w_2)$. We connect $P(v)$ and $s$ --making $s$ the last segment of $P(v)$-- by inserting two horizontal and one vertical segment in between. We choose these three new segments such that the new vertical segment and at least one of the new horizontal segments intersect the $x$-axis and $y$-axis, respectively. Moreover, if all vertical or all horizontal subsegments for $W \setminus w_2$ or $W\setminus w_1$ lie on the same side of $o$, respectively, then we choose the new segments such that $P(v)$ intersects the corresponding axis on the opposite side.

 Secondly, if $v$ not adjacent to $w_1$ or $w_2$ -- say $w_2$, then we define $P(v)$ as we did for invariant A in case $|S| \leq k-1$. In this case we do not need the special treatment if $|S|=1$.
 
 In both cases, the result is a $(2k-2)$-bend representation of $G' \cup v$ and we claim that our invariant still holds. By definition $v$ is vertically as well as horizontally displayed. As before, let $W'$ be a $k$-clique of $\tilde{G}$ consisting of $v$ and all but one vertex from $W$. If $w_2 \notin W'$ then invariant A holds. If $w_1 \notin W'$ then invariant A holds by interchanging the roles of vertical and horizontal. If $w_1,w_2 \in W'$ then invariant B holds since a subset of the former $s(w_1w_2)$ still displays the edge $(w_1,w_2)$.
\end{proof}

By Corollary~\ref{prop:bend-tw-low} and Theorem~\ref{thm:twUpperBound} the maximum bend-number for the class of graphs with a fixed treewidth is completely determined. In~\cite{Kna-12} the \emph{simple treewidth} of a graph is defined as follows. A \emph{simple $k$-tree} is a graph that can be constructed starting with a $(k+1)$-clique and in every step attaching a new vertex to a $k$-clique of the already constructed graph such that at most one vertex is attached to the same $k$-clique. The \emph{simple treewidth} $\stw(G)$ of a graph $G$ is the minimum $k$ such that $G$ is a subgraph of some simple $k$-tree. This is a graph-parameter of independent interest, see~\cite{Kna-12b}. In~\cite{Hel-12} it is shown that $\stw(G)\leq 3$ implies $b(G)\leq 3$, being one less than what is needed for $\tw(G)\leq 3$. 

\begin{conj}\label{conj:stw}
 For $k\geq 3$ the maximal bend-number of graphs with $\stw(G)\leq k$ is $2k-3$.
\end{conj}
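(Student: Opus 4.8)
\emph{Overview.} The statement has two halves: an upper bound, $b(G)\le 2k-3$ whenever $\stw(G)\le k$, and a matching construction of some $G$ with $\stw(G)\le k$ and $b(G)\ge 2k-3$. My plan for the upper bound is to re-run the proof of Theorem~\ref{thm:twUpperBound} along a construction sequence of a simple $k$-tree $\tilde G\supseteq G$, exploiting the one extra feature of a \emph{simple} $k$-tree: every $k$-clique of $\tilde G$ is the attachment clique of at most one later vertex. So when a new vertex $v$ is attached to the $k$-clique $W=\{w_1,\dots,w_k\}$, the clique $W$ is henceforth useless and $P(v)$ may consume its displaying structure aggressively. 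Concretely, instead of connecting all $k$ vertical displaying subsegments of $W$ by a snake (costing $2k-2$ bends), I would connect only $k-1$ of them — choosing them so as to include the exceptional vertex $w_1$ of the relevant invariant — obtaining a snake with $2k-4$ bends; then I would prolong its last vertical segment past the others so that its tip both displays $v$ vertically and crosses the $x$-axis, and finally add a single bend and route horizontally into a horizontal displaying subsegment of $w_k$, which by the coordinated invariant is reachable. This gives a path with at most $2k-3$ bends that edge-intersects exactly the neighbours of $v$. The base case $k=3$ is the result $\stw(G)\le 3\Rightarrow b(G)\le 3$ of~\cite{Hel-12}.

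The remaining work on the upper bound is to set up the right invariant — a ``one use per clique'' variant of Invariants~A and~B — and to check the sub-cases: $v$ adjacent to all of $W$ versus to a proper subset (the latter being strictly easier, as there $|S|\le k-1$ and there is spare budget), and the positional cases governing on which side of the origin the displaying subsegments and the new path must lie, as in the $|S|=1$ discussion of Theorem~\ref{thm:twUpperBound}; the hypothesis $k\ge 3$ keeps all these within $2k-3$ bends. The delicate point is to phrase the invariant so that the one bend saved while inserting $v$ is never needed back: one must verify that $v$, in every $k$-clique of $\tilde G$ later containing it, still supplies the horizontal and vertical displaying subsegments (and, where Invariant~B is to be re-established, an edge-displaying segment on the right grid-line) needed to insert the next vertex.

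For the lower bound the natural model is the tightness proof accompanying Theorem~\ref{thm:acy}: one iterates a gadget that attaches, to many distinct cliques, one more vertex than the bend budget $2k-4$ can host, and then derives a contradiction with $b(G)\le 2k-4$ by counting. I would (i) start from a graph of simple treewidth $k$ containing a large family of $k$-cliques whose relative geometry in any representation is forced; (ii) use Lemma~\ref{lem:pnt} (for the odd budget) together with the segment-endpoint count, exactly as in Sections~\ref{sec:bip} and~\ref{sec:acy}, to argue that in any hypothetical $(2k-4)$-bend representation almost all gadget paths take a canonical shape; (iii) extract a nested subfamily of cliques with a forced left-to-right order; and (iv) attach on top of each of these $2k-3$ further vertices — one more than the number of available bends — and conclude by pigeonhole as in the second theorem of Section~\ref{sec:acy}. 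The main obstacle, and the reason the statement is still a conjecture, is step~(i): the clean engine of the treewidth lower bound, namely $K_{m,n}$ with $n\gg m$, is unavailable, because $K_{m,n}$ has simple treewidth far above $m$ (a simple $k$-tree forbids attaching more than one vertex to the $k$-clique on $A$). One therefore needs a genuinely new base graph of simple treewidth exactly $k$ that still forces $2k-3$ bends; constructing it, and pushing the involved counting through it, is where I expect the real difficulty to lie.
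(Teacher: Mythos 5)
This statement is an open conjecture in the paper: the authors give no proof of it, only the supporting evidence that the case $k=3$ is established in~\cite{Hel-12} and that Theorem~\ref{thm:twUpperBound} gives $2k-2$ for ordinary treewidth. So there is no ``paper proof'' to match; what you have written is a research plan, and it does not close either half. For the upper bound, the step where you save the bend is exactly the unverified part. Connecting $k-1$ of the vertical displaying subsegments and finishing with one horizontal segment inside $\bar{s}(w_k)$ does give a $(2k-3)$-bend path, but that last segment displays the \emph{edge} $\{v,w_k\}$, not $v$ alone, and the extended vertical tip is the only piece of $P(v)$ that can display $v$ vertically. The invariant of Theorem~\ref{thm:twUpperBound} must then be re-established not for $W$ (which, you correctly note, is dead in a simple $k$-tree) but for the $k$ \emph{new} $k$-cliques $(W\setminus\{w_i\})\cup\{v\}$, each of which may still receive one vertex; for those, $v$ needs a horizontal and a vertical displaying subsegment crossing the respective axes, the $w_i$ must remain displayed (so $P(v)$ may not consume $s(w_i)$ either), and the side-of-origin conditions must hold. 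In the paper's proof these requirements are precisely what force the two extra segments you are deleting, and you have not shown that simplicity of the tree lets you meet them with one fewer bend; asserting that the saved bend ``is never needed back'' is the whole content of the conjecture, not a checkable afterthought.

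For the lower bound you concede the decisive gap yourself: the engine of Corollary~\ref{prop:bend-tw-low} is $K_{k,n}$ with $n\gg k$, which has simple treewidth much larger than $k$ (a simple $k$-tree attaches at most one vertex per $k$-clique, so only $k+1$ vertices can be joined to all of $A$), and no replacement base graph of simple treewidth $k$ forcing $2k-3$ bends is exhibited. Steps (ii)--(iv) of your plan are contingent on (i) and cannot be assessed without it. In short: the proposal is a sensible outline of how one might attack the conjecture, consistent in spirit with the techniques of Sections~\ref{sec:acy} and~\ref{sec:tw}, but both directions contain an essential missing construction or verification, and the statement remains unproven.
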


\section{Complexity}\label{sec:rec}
In~\cite{Asi-12} it is asked for the complexity of recognizing $k$-bend graphs. In general, the bend-number of a graph can be computed by solving a mixed integer program (MIP). Unfortunately the problem instance becomes so huge, that this approach is inapplicable even for graphs with only 10 vertices. It is well-known that interval graphs, that is $0$-bend graphs, can be recognized in polynomial time~\cite{Boo-76}. In this section we prove that recognizing single-bend graphs ($1$-bend graphs) is NP-complete. In~\cite{Gya-95} it was shown that recognizing 2-track graphs is NP-complete and~\cite{Shm-84} proves that recognizing $k$-interval graphs is NP-complete for every fixed $k\geq 2$. Finally, in~\cite{Jia-10} it is shown that $k$-track graphs is NP-complete for every fixed $k\geq 2$. One easily sees that every single-bend graph is a 2-track graph as well as a 2-interval graph. But the converse is not true. For example every outerplanar $G$ has $t(G) \leq 2$~\cite{Kos-99} and $i(G)\leq 2$~\cite{Sch-83}.

It is easy to verify a single-bend representation, so SINGLE-BEND-RECOG\-NITION is in NP. For NP-hardness we set up a reduction from ONE-IN-THREE 3-SAT, i.e., we are given a formula $\mathcal{F}=(\mathcal{C}_{1} \wedge \cdots \wedge \mathcal{C}_{n})$ that is a conjunction of clauses $\mathcal{C}_{1},\ldots,\mathcal{C}_{n}$. Each clause is the \emph{exclusive disjunction} of exactly three literals $\mathcal{C}_{i} = (x_{i1} \veedot x_{i2} \veedot x_{i3})$ which are in turn either negated or non-negated Boolean variables. Given such a formula $\mathcal{F}$, it is NP-complete \cite{Gar-79,Sch-78} to decide, whether there is an assignment of the variables fulfilling $\mathcal{F}$, that is in each clause there is \emph{exactly one true literal}. Moreover ONE-IN-THREE 3-SAT remains NP-complete if each literal is a \emph{non-negated variable} and each clause consists of three \emph{distinct literals}. We will use both additional assumptions on $\mathcal{F}$, even though the first is just for convenience. The distinctness assumption is crucial in the following reduction.

Given a ONE-IN-THREE 3-SAT formula $\mathcal{F}$ we will define a graph $G_\mathcal{F}$, such that $b(G_\mathcal{F}) = 1$ if and only if $\mathcal{F}$ can be fulfilled. The graph consists of an induced subgraph $G_\mathcal{C}$ for every clause $\mathcal{C}$ with 13 vertices, called the \emph{clause gadget}, a vertex $v_{j}$ for every variable $x_{j}$ and 31 additional vertices.

\subsection{Clause Gadgets}
Constructing a clause gadget $G_\mathcal{C}$ starts with an induced octahedral graph $O$. Label the vertices by $\{a,A,b,B,c,C\}$ as in Figure~\ref{fig:octahedral-graph}. This way $\{a,A\}$, $\{b,B\}$ and $\{c,C\}$ are the three \emph{non}-edges and their complements $\{b,C,B,c\}$, $\{a,C,A,c\}$ and $\{a,B,A,b\}$ are the three induced 4-cycles in $O$.

\begin{figure}[htb]
 \centering
 \psfrag{a}[cc][cc]{$a$}
 \psfrag{b}[cc][cc]{$b$}
 \psfrag{c}[cc][cc]{$c$}
 \psfrag{A}[cc][cc]{$A$}
 \psfrag{B}[cc][cc]{$B$}
 \psfrag{C}[cc][cc]{$C$}
 \psfrag{Pa}[cc][cc]{$P(a)$}
 \psfrag{Pb}[cc][cc]{$P(b)$}
 \psfrag{Pc}[cc][cc]{$P(c)$}
 \psfrag{PA}[cc][cc]{$P(A)$}
 \psfrag{PB}[cc][cc]{$P(B)$}
 \psfrag{PC}[cc][cc]{$P(C)$}
 \psfrag{1}[cc][cc]{\textbf{1)}}
 \psfrag{2}[cc][cc]{\textbf{2)}}
 \includegraphics{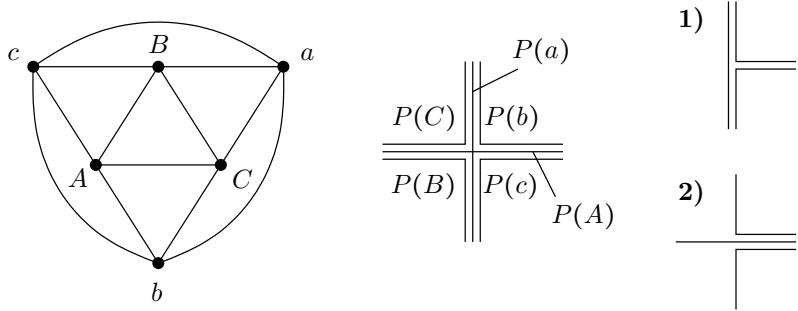}
 \caption{The labeled octahedral graph $O$, a single-bend representation of $O$, and the two possible ways a triangle of $O$ is represented.}
 \label{fig:octahedral-graph}
\end{figure}

\begin{lem}\label{lem:octaeder}
 We have $b(O) = 1$ and in every single-bend representation
 \begin{enumerate}[label=(\roman{*}), ref=(\roman{*})]
  \item there is a unique grid-point, called the \emph{center}, that is contained in every path,\label{enum:center}
  \item every edge-intersection between two paths lies on a half ray starting at the center, called a \emph{center ray},
  \item for every pair of center rays, there is a unique vertex in $O$ whose path intersects exactly these two center rays, and\label{enum:pairs}
  \item every triangle in $O$ is represented in one of the two ways on the right of Figure~\ref{fig:octahedral-graph}.\label{enum:triangle}
 \end{enumerate}
\end{lem}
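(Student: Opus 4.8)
First I would exhibit a single-bend representation to get $b(O)\le 1$, then prove the rigidity statements \ref{enum:center}--\ref{enum:triangle} by a local-to-global case analysis built around the non-edges of $O$.

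\textbf{The upper bound and the lower bound.} For $b(O)\le 1$ I would use the representation drawn in the middle of Figure~\ref{fig:octahedral-graph}: fix a grid-point $o$ together with its four emanating half-rays, and note that $O=K_{2,2,2}$ has exactly six vertices while $\binom{4}{2}=6$. Assign to each vertex one of the six pairs of rays and let its path occupy an initial portion of each of its two rays; a path whose two rays are perpendicular bends at $o$, a path whose two rays are opposite runs straight through $o$ and bends once further out. Two such paths then share a grid-edge near $o$ if and only if their ray-pairs meet, and two $2$-subsets of a $4$-set are disjoint exactly when they are complementary. Since both $O$ and the resulting ``ray-pair graph'' have precisely three non-edges, forming a perfect matching, the correspondence can be chosen so that the three complementary pairs are the non-edges $\{a,A\},\{b,B\},\{c,C\}$; hence this is a representation of $O$ (and simultaneously it is the representation pictured). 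For $b(O)\ge 1$ it suffices that $\{b,B,c,C\}$ induces a $C_4$, so $O$ is not an interval graph, and as noted in the introduction the interval graphs are exactly the graphs with $b(G)=0$.

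\textbf{Rigidity.} Fix an arbitrary single-bend representation of $O$. The elementary building block is the shape of a triangle: if three $1$-bend paths pairwise share a grid-edge, then either (a) the three relevant intersection segments are collinear and the paths share one common grid-edge --- here one uses the Helly-type fact that three intervals pairwise overlapping in an edge have a common edge --- or (b) two of the three paths have their unique bend at one common grid-point $o$ while the third path crosses the two grid-lines through $o$. These are precisely the two pictures on the right of Figure~\ref{fig:octahedral-graph}, giving \ref{enum:triangle} once \ref{enum:center} is known. Note that a \emph{single} triangle need not have a point lying on all three paths, so the center has to be extracted using the non-edges. For this I would analyse a ``diamond'' $\{u,v,w,w'\}$ --- two triangles $uvw$ and $uvw'$ sharing the edge $uv$ with $w\not\sim w'$, for instance $\{a,b,c,C\}$ --- and show, by going through the (a)/(b) alternatives for both triangles and invoking $w\not\sim w'$ to forbid an edge-intersection of $P_w$ and $P_{w'}$, that the four paths must contain a common grid-point. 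One then propagates this common point along a chain of edge-sharing triangle steps that stay inside diamonds: $abc\to abC\to aBC\to aBc$ reaches $\{a,b,c,B,C\}$ (successively sharing edges $ab$, $aC$, $aB$), and $abc\to Abc$ adds $A$; at each step the common grid-point is forced to persist. This produces a single grid-point $o$ on all six paths, which is \ref{enum:center}. Given \ref{enum:center}, every path is an $L$ through $o$ and so each edge-intersection lies on a half-ray of $o$, which is (ii); the adjacency pattern of $O$ then forces exactly four such rays and a bijection between the six ray-pairs and the six vertices, which is \ref{enum:pairs}, and re-reading (a)/(b) in this picture re-derives \ref{enum:triangle}.

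\textbf{Main obstacle.} The delicate point is the diamond step: one must rule out, across all combinations of the two triangle shapes, that $P_w$ and $P_{w'}$ are forced to intersect unless the four paths pass through a common grid-point. This requires tracking bend-points, ray directions and segment overlaps simultaneously; the number of essentially different configurations is kept manageable by the fact that the automorphism group of $O$ acts transitively on edges and on triangles, so all diamonds are equivalent and the later propagation is routine bookkeeping.
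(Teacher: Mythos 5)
Your upper bound, your lower bound via the induced $C_4$, and your local dichotomy for triangles are essentially fine, but the argument collapses at exactly the step you flag as delicate: the diamond lemma is false. A diamond $\{u,v,w,w'\}$ (two triangles glued along $uv$ with $w\not\sim w'$) is $K_4$ minus an edge, which is an \emph{interval graph}: placing $P(u)=P(v)=[0,3]$, $P(w)=[0,1]$ and $P(w')=[2,3]$ on a single horizontal grid-line realizes all required adjacencies and the non-adjacency $w\not\sim w'$, yet the four paths have no common grid-point (attach a harmless bend to each path far away to make them genuine single-bend paths). Both triangles are then in your case (a), and $P(w)\cap P(w')=\emptyset$ is perfectly consistent with that, so no case analysis over the two triangle shapes of a single diamond can force a common point. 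Hence the base case of your propagation never gets off the ground, and the chain $abc\to abC\to aBC\to aBc$ has nothing to propagate.

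What actually forces the center is not a diamond but an \emph{induced 4-cycle} together with a vertex adjacent to all four of its vertices. The paper invokes the classification, due to Golumbic, Lipshteyn and Stern, of single-bend representations of an induced $C_4$ as a frame, a true pie, or a false pie (Figure~\ref{fig:4-cycles}). In a frame the four bends are pairwise distinct, so no fifth single-bend path can edge-intersect all four paths; since each of the three induced 4-cycles of $O$ (the complements of the three non-edges) is dominated by two further vertices, each must be a pie. The three pies pairwise share two paths, hence share their center, and statements \ref{enum:center}--\ref{enum:triangle} follow. If you want a self-contained proof you must establish (or cite) this frame/pie trichotomy; the essential input your argument is missing is that a $C_4$, unlike a diamond, is not an interval graph --- that, and not the non-edges alone, is where the rigidity comes from.
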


\begin{proof}
 Figure~\ref{fig:octahedral-graph} shows $b(O)\leq 1$ and since $O$ contains induced 4-cycles it is not an interval graph. Hence $b(O) =1$.

 By a result of~\cite{Gol-09}, every induced 4-cycle in a single-bend representation is either a \emph{frame}, a \emph{true pie} or a \emph{false pie}. These terms are illustrated in Figure~\ref{fig:4-cycles}. If an induced 4-cycle is represented by a frame, then the bends of the four corresponding paths are pairwise distinct. Thus in a single-bend representation no other single-bend path can edge-intersect all of them. Since for each induced 4-cycle in $O$ there is a vertex that is adjacent to all of its vertices, we conclude that $\{a,B,A,b\}$, $\{a,C,A,c\}$ and $\{b,C,B,c\}$ are pies. So all pies share the middle point, the claimed center, and every path intersects exactly two center rays. Since every edge in $O$ is part of an induced 4-cycle, no two paths can intersect the same pair of center rays. This concludes~\ref{enum:center}--\ref{enum:pairs}. Part~\ref{enum:triangle} is easily obtained from this.
\end{proof}

\begin{figure}[htb]
 \centering
 \includegraphics{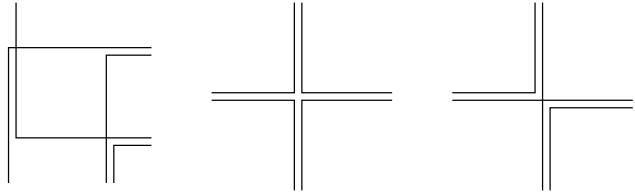}
 \caption{Single-bend representations of an induced 4-cycle: A frame (left), a true pie (middle) and a false pie (right).}
 \label{fig:4-cycles}
\end{figure}

To complete a clause gadget $G_\mathcal{C}$ seven vertices are added to the octahedral graph $O$: $W_{ABC}$ is adjacent to $\{A,B,C\}$, $w_{abC}$, $w_{aBc}$ and $w_{Abc}$ are adjacent to $\{a,b,C\}$, $\{a,B,c\}$ and $\{A,b,c\}$, respectively, and $s_{ab}$, $s_{ac}$ and $s_{bc}$ are adjacent to $\{a,b\}$, $\{a,c\}$ and $\{b,c\}$, respectively. The resulting graph is depicted in Figure~\ref{fig:clause-gadget}.

\begin{figure}[htb]
 \centering
 \psfrag{a}[cc][cc]{$a$}
 \psfrag{b}[cc][cc]{$b$}
 \psfrag{c}[cc][cc]{$c$}
 \psfrag{A}[cc][cc]{$A$}
 \psfrag{B}[cc][cc]{$B$}
 \psfrag{C}[cc][cc]{$C$}
 \psfrag{ab}[cc][cc]{$s_{ab}$}
 \psfrag{bc}[cc][cc]{$s_{bc}$}
 \psfrag{ac}[cc][cc]{$s_{ac}$}
 \psfrag{W}[cc][cc]{$W_{ABC}$}
 \psfrag{w1}[cc][cc]{$w_{Abc}$}
 \psfrag{w2}[cc][cc]{$w_{aBc}$}
 \psfrag{w3}[cc][cc]{$w_{abC}$}
 \psfrag{Pab}[cc][cc]{$P(s_{ab})$}
 \psfrag{Pbc}[cc][cc]{$P(s_{bc})$}
 \psfrag{Pac}[cc][cc]{$P(s_{ac})$}
 \psfrag{PW}[cc][cc]{$P(W_{ABC})$}
 \psfrag{Pw1}[cc][cc]{$P(w_{Abc})$}
 \psfrag{Pw2}[cc][cc]{$P(w_{aBc})$}
 \psfrag{Pw3}[cc][cc]{$P(w_{abC})$}
 \includegraphics{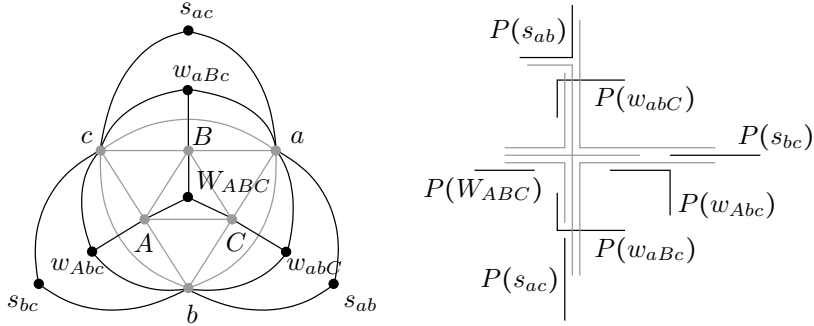}
 \caption{The clause gadget $G_\mathcal{C}$ with a single-bend representation.}
 \label{fig:clause-gadget}
\end{figure}

\begin{lem}\label{lem:clause-invariant}
 We have $b(G_\mathcal{C})= 1$ and in every single-bend representation
 \begin{enumerate}[label=(\roman{*}), ref=(\roman{*})]
  \item every center ray contains a segment of exactly one of $P(W_{ABC})$, $P(w_{abC})$, $P(w_{aBc})$, and $P(w_{Abc})$, and\label{enum:w}
  \item every such segment, except the one of $P(W_{ABC})$, is contained in a segment of $P(a)$, $P(b)$, or $P(c)$.\label{enum:w-s}
 \end{enumerate}
\end{lem}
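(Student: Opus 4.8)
The plan is to first dispose of the easy equality $b(G_{\mathcal C})=1$ and then extract (i) and (ii) from the rigidity of the octahedron supplied by Lemma~\ref{lem:octaeder}. For $b(G_{\mathcal C})\le 1$ one checks that the representation drawn in Figure~\ref{fig:clause-gadget} is a valid single-bend representation, and since $G_{\mathcal C}$ contains the induced $4$-cycles of $O$ it is not an interval graph, so $b(G_{\mathcal C})=1$. Now fix an arbitrary single-bend representation of $G_{\mathcal C}$ and restrict it to $O$. By Lemma~\ref{lem:octaeder} there is a center $o$, and — since all segments are axis-parallel — the four center rays are exactly the ones pointing east, west, north and south; each vertex of $O$ is a single-bend path through $o$ running along exactly two of these rays, with two vertices non-adjacent precisely when their ray-pairs are complementary. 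In particular one of the three non-edges of $O$ is carried by the two paths using opposite rays (a ``horizontal'' and a ``vertical'' path through $o$), while the remaining four vertices are represented by ``$L$-paths'', one per quadrant.

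The crux is the following. Each of $W_{ABC},w_{abC},w_{aBc},w_{Abc}$ is, in $G_{\mathcal C}$, a common neighbour of a triangle of $O$ — of $\{A,B,C\},\{a,b,C\},\{a,B,c\},\{A,b,c\}$ respectively — and each of these four vertices has degree exactly $3$ in $G_{\mathcal C}$. By Lemma~\ref{lem:octaeder}\ref{enum:triangle} every triangle of $O$ is realized in one of the two ways of Figure~\ref{fig:octahedral-graph}: either its three paths share a common center ray, or they occupy three of the four rays ``pinwheel-fashion''. I claim a pinwheel triangle admits \emph{no} single-bend common neighbour: if a path $P$ had to meet all three of its paths, then — tracing through the at most two segments of $P$, and using that only horizontal and vertical directions are available — either $P$ would need a third segment, or its unique horizontal (respectively vertical) segment would be forced to run through $o$ along both opposite rays and hence edge-intersect an octahedron path outside the triangle; both options contradict $\deg=3$. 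Consequently the four triangles $\{A,B,C\},\{a,b,C\},\{a,B,c\},\{A,b,c\}$ are all of the common-ray type, and as there are exactly four common-ray triangles — one for each center ray — they use four pairwise distinct center rays.

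Next, fix a heavy vertex, say $w_{abC}$, and let $\rho$ be the common center ray of its triangle $\{a,b,C\}$. Running through the segments of $P(w_{abC})$ as before, and again using that $P(w_{abC})$ may not meet the octahedron path using $\rho$ together with the opposite ray nor any octahedron path outside $\{a,b,C\}$, one shows $P(w_{abC})$ must contain a segment lying on $\rho$; likewise for $W_{ABC}$ and its ray. Since the four heavy paths are thereby assigned four distinct center rays and there are only four center rays, each center ray carries a segment of exactly one of the four heavy paths, which is (i). For (ii), note that on the ray $\rho$ of $\{a,b,C\}$ the only octahedron paths present are those of the three triangle members, of which exactly two — here $a$ and $b$, as the triangle has a single capital member — lie in $\{a,b,c\}$; a short argument (the segment of $P(w_{abC})$ on $\rho$ must overlap all three octahedron paths of $\rho$, and cannot protrude past them without either leaving the reach of the two $L$-shaped members or creating an unwanted edge-intersection) shows that this segment is contained in a segment of $P(a)$ or $P(b)$. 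The same reasoning applied to $w_{aBc}$ and $w_{Abc}$ yields (ii) in full.

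The main obstacle lies in the case analysis compressed into the last two paragraphs: one must genuinely enumerate how one or two single-bend segments can be incident to a prescribed configuration of rays around $o$ and verify that only the claimed placements are admissible. The ``pinwheel triangles have no single-bend common neighbour'' step is the conceptual heart, whereas the ``segment contained in $P(a)$, $P(b)$ or $P(c)$'' step is the most delicate, since the identity of the straight-line path in each common-ray triangle varies with the representation; here one exploits that each of $\{a,b,C\},\{a,B,c\},\{A,b,c\}$ carries exactly one capital vertex, so that whichever member is the straight line, at least one lowercase member remains available to absorb the heavy path's segment on that ray.
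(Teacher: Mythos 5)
Your treatment of $b(G_\mathcal{C})=1$ and of part \ref{enum:w} follows the paper's line: each heavy vertex is a degree-$3$ common neighbour of a triangle of $O$, the ``pinwheel'' realization from Lemma~\ref{lem:octaeder}--\ref{enum:triangle} is incompatible with such a neighbour (a single-bend path hitting all three pinwheel paths is forced onto two center rays through the center and then, by Lemma~\ref{lem:octaeder}--\ref{enum:pairs}, edge-intersects a non-neighbour), so all four triangles are of common-ray type, and since the four common-ray triangles occupy four distinct rays this yields \ref{enum:w}. That part is essentially the paper's argument and is fine.

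Part \ref{enum:w-s} has a genuine gap: you never use the vertices $s_{ab}$, $s_{ac}$, $s_{bc}$, and without them the claim is false. On the common ray $\rho$ of $\{a,b,C\}$ the three octahedron segments are intervals $[o,p_a]$, $[o,p_b]$, $[o,p_C]$ emanating from the center; the segment $I$ of $P(w_{abC})$ on $\rho$ only needs to contain a grid-edge inside each of them, and nothing among the vertices you invoke prevents $I$ from also protruding arbitrarily far beyond $\max(p_a,p_b,p_C)$ --- there is no octahedron path out there to create an ``unwanted edge-intersection'', and an interval can simultaneously reach inside $[o,\min(p_a,p_b,p_C)]$ and extend past everything, so it neither ``leaves the reach'' of the $L$-shaped members nor meets a forbidden path. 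Such an $I$ is contained in no segment of $P(a)$, $P(b)$ or $P(c)$. The paper closes exactly this hole with the $s$-vertices: $P(s_{ab})$ is forced onto the same ray $\rho$, must meet $P(a)$ and $P(b)$ but avoid $P(C)$, hence $p_C<\min(p_a,p_b)$ and $P(s_{ab})$ occupies a grid-edge in $(p_C,\min(p_a,p_b)]$; since $w_{abC}$ is not adjacent to $s_{ab}$, the interval $I$ is then trapped inside $[o,p_C]\subseteq[o,p_a]\cap[o,p_b]$. You need this (or some equivalent use of $s_{ab}$, $s_{ac}$, $s_{bc}$ --- they are in the gadget precisely for this purpose) to obtain \ref{enum:w-s}.
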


\begin{proof}
 Let $w \in \{W_{ABC},w_{abC},w_{aBc},w_{Abc}\}$. Then $w$ is adjacent to every vertex of the triangle $\Delta$ in $O$ that is induced by the vertices in the subscript of $w$. By Lemma~\ref{lem:octaeder}--\ref{enum:triangle}, $\Delta$ is represented in one of the two ways that are illustrated on the right of Figure~\ref{fig:octahedral-graph}. In case \textbf{1)}, $P(w)$ would be contained in two center rays since it has an edge-intersection with all three paths. But then, by Lemma~\ref{lem:octaeder}--\ref{enum:pairs}, $P(w)$ would edge-intersect paths that correspond to vertices which are not adjacent to $w$. Hence $\Delta$ is represented as in case \textbf{2)} and one segment of $P(w)$ is contained in the center ray that supports the paths for all three vertices in $\Delta$. This concludes part \ref{enum:w}.

 Now consider a pair $(w,s)$ in $\{(w_{abC},s_{ab}), (w_{aBc},s_{ac}), (w_{Abc},s_{bc})\}$. Both, $P(w)$ and $P(s)$, intersect at most one center ray. Moreover it is the same center ray and it contains a segment of the path for the capitalized vertex that is adjacent to $w$ but not $s$. Hence the segment of $P(s)$ lies further away from the center than the segment of $P(w)$. Thus the segment of $P(w)$ is completely contained in a segment of the path for each neighbor of $s$.
\end{proof}

\subsection{The reduction}

Given a formula $\mathcal{F} = (\mathcal{C}_{1} \wedge \cdots \wedge \mathcal{C}_{n})$ with clauses $\mathcal{C}_{i} = (x_{i1} \veedot x_{i2} \veedot x_{i3})$ for $i =1,\ldots, n$ we are now ready to define the graph $G_\mathcal{F}$ as follows. See Figure~\ref{fig:3sat-example} for an example.
\begin{enumerate}
 \item For each clause $\mathcal{C}$ there is a clause gadget $G_\mathcal{C}$.
 \item For each variable $x_{j}$ there is a vertex $v_{j}$ that is adjacent to $w_{Abc}$, $w_{aBc}$, or $w_{abC}$, whenever $x_{j}$ is the first, second, or third variable in $\mathcal{C}$, respectively.
 \item There is a vertex $V$ adjacent to every $W$ in the clause gadgets.
 \item There is a $K_{2,4}$ with a specified vertex $T$ of the larger part, called the \emph{truth-vertex}. $T$ is adjacent to every $v_{j}$ and $V$.
 \item There are two octahedral graphs $O_1$ and $O_2$. The vertex $T$ is connected to the vertices of a triangle of each.
 \item There are two more octahedral graphs $O_3$ and $O_4$. The vertex $V$ is connected to the vertices of a triangle of each.
\end{enumerate}

\begin{figure}[htb]
 \centering
 \psfrag{V}[cc][cc]{$V$}
 \psfrag{T}[cc][cc]{$T$}
 \psfrag{x1}[cc][cc]{$v_1$}
 \psfrag{x2}[cc][cc]{$v_2$}
 \psfrag{x3}[cc][cc]{$v_3$}
 \psfrag{x4}[cc][cc]{$v_4$}
 \includegraphics{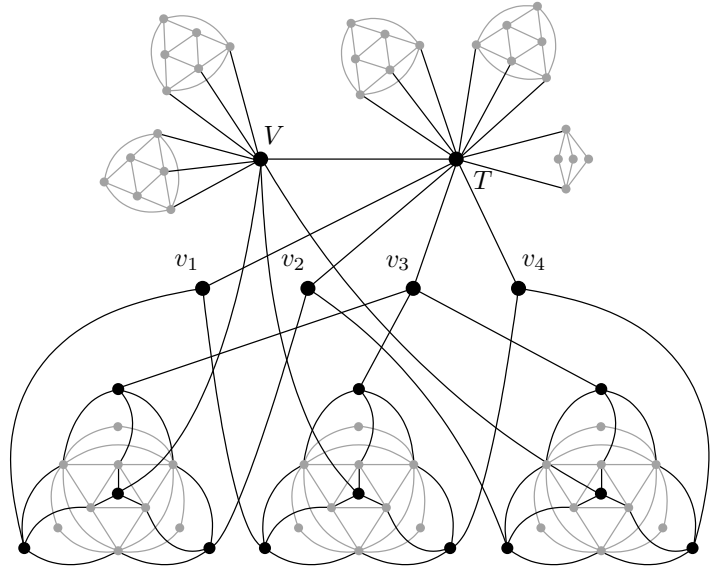}
 \caption{The graph $G_\mathcal{F}$ for $\mathcal{F} = (x_1\veedot x_2\veedot x_3) \wedge (x_1\veedot x_3 \veedot x_4) \wedge (x_2 \veedot x_3 \veedot x_4)$.}
 \label{fig:3sat-example}
\end{figure}

We will prove that a ONE-IN-THREE 3-SAT-formula $\mathcal{F}$ can be fulfilled if and only if $b(G_\mathcal{F})=1$.

\begin{thm}
SINGLE-BEND-RECOGNITION is NP-hard.
\end{thm}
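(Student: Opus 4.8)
The plan is to establish the equivalence stated just above the theorem: $\mathcal{F}$ has a one-in-three satisfying assignment if and only if $b(G_\mathcal{F}) = 1$. Since membership in NP was already observed, and $G_\mathcal{F}$ clearly has size polynomial in $|\mathcal{F}|$ and is computable in polynomial time, this equivalence implies NP-hardness (in fact NP-completeness). Note also that $G_\mathcal{F}$ contains induced $4$-cycles inside every octahedron, so it is never an interval graph and $b(G_\mathcal{F}) \ge 1$ holds automatically; hence it suffices to prove that $b(G_\mathcal{F}) \le 1$ exactly when $\mathcal{F}$ is one-in-three satisfiable.

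\emph{From a satisfying assignment to a representation.} Here I would exhibit an explicit single-bend representation. Draw $P(T)$ and $P(V)$ as two long ``reference paths'' that cross each other, with their single bends pushed far out of the region used by the rest of the construction, and realize the $K_{2,4}$ on $T$ together with the four octahedra $O_1,\dots,O_4$ attached to the triangles at $T$ and at $V$ exactly as in the single-bend representation of Lemma~\ref{lem:octaeder}, with $P(T)$ (resp.\ $P(V)$) running along the common center ray of the triangle it meets. Route each variable path $P(v_j)$ so that it meets $P(T)$ in one of two prescribed ways, according to whether $x_j$ is true or false, and picks up along its two segments all the required edge-intersections with the $w$-vertices of clauses containing $x_j$. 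Finally, lay out each clause gadget $G_\mathcal{C}$ as in Figure~\ref{fig:clause-gadget}, orienting its center cross so that the rays carrying $P(w_{Abc}),P(w_{aBc}),P(w_{abC})$ agree with the truth values of $x_{i1},x_{i2},x_{i3}$ and the ray carrying $P(W_{ABC})$ faces $P(V)$. Because exactly one literal of the clause is true, this choice of orientation is consistent, and the paths of $v_{i1},v_{i2},v_{i3}$ and $V$ can be made to realize all required edge-intersections. The only remaining work, which I would carry out on the figures, is to verify that no two paths and no two distinct gadgets share an unintended grid-edge.

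\emph{From a representation to a satisfying assignment.} Fix a single-bend representation of $G_\mathcal{F}$ and apply Lemma~\ref{lem:octaeder} to all octahedra. Arguing as in the proof of Lemma~\ref{lem:clause-invariant} (if a triangle adjacent to $T$ were drawn as in case \textbf{1)} of Figure~\ref{fig:octahedral-graph}, then $P(T)$ would have to run through two center rays of that octahedron and hence, by Lemma~\ref{lem:octaeder}--\ref{enum:pairs}, edge-intersect a non-neighbour of $T$), one obtains that one segment of $P(T)$ lies on the common center ray of $O_1$ and the other on that of $O_2$; together with the $K_{2,4}$ on $T$ this ``rigidifies'' $P(T)$ so strongly that every $P(v_j)$ meets it in one of exactly two ways. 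Symmetrically $O_3,O_4$ rigidify $P(V)$, and the edges from $T$ to $V$ and to every $v_j$ relate all of this to one common frame. Declare $x_j$ \emph{true} if $P(v_j)$ meets $P(T)$ in the first way and \emph{false} otherwise. It remains to check that each clause then contains exactly one true literal. For this, apply Lemma~\ref{lem:clause-invariant} to the clause gadget $G_\mathcal{C}$: the paths $P(W_{ABC}),P(w_{abC}),P(w_{aBc}),P(w_{Abc})$ sit one on each of the four center rays of the octahedron $O$ of $G_\mathcal{C}$, the three $w$-segments being nested inside segments of $P(a),P(b),P(c)$. Since $P(V) \cap P(W_{ABC})$ fixes which center ray $P(W_{ABC})$ uses, and a single-bend path $P(v_j)$ can approach its $w$-segment only from one side — the side determined by its truth-type relative to the rigid $P(T)$ — a short case analysis on the four center rays of $O$ forces exactly one of $x_{i1},x_{i2},x_{i3}$ to be true.

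The main obstacle is this last step, the ``exactly one'': showing that a clause cannot be realized with zero, or with two or three, true literals. This is where the forced center cross of the clause gadget, the rigid reference paths $P(T)$ and $P(V)$, and the single-bend restriction on the wires $P(v_j)$ and $P(V)$ must all be combined at once, and it is exactly here that the hypothesis that the three literals of each clause are \emph{distinct} enters: if two literals coincided, their wires could share a grid-edge and a clause could be satisfied with two (or zero) true literals, breaking the reduction. The construction direction is conceptually routine but calls for care in keeping the linearly many gadgets pairwise edge-disjoint.
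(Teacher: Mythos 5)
Your proposal follows essentially the same route as the paper: the same reduction, the same use of Lemma~\ref{lem:octaeder} and Lemma~\ref{lem:clause-invariant}, the same rigidification of $P(T)$ via the $K_{2,4}$ and $O_1,O_2$ (and of $P(V)$ via $O_3,O_4$), and the same construction direction via Figure~\ref{fig:3satrepresentation}. The one step you flag as ``the main obstacle'' --- forcing \emph{exactly one} true literal per clause --- is closed in the paper by a short, concrete count rather than a case analysis on how each $P(v_j)$ approaches its $w$-segment: since the two endpoints of the vertical segment of $P(V)$ are consumed inside $O_3$ and $O_4$, the vertical segment of each $P(W_{ABC})$ must lie entirely inside the vertical segment of $P(V)$, so by Lemma~\ref{lem:clause-invariant}--\ref{enum:w} the remaining segment of $P(W_{ABC})$ occupies a \emph{horizontal} center ray of the gadget's octahedron; of the other three center rays exactly two are then vertical and one horizontal, and by Lemma~\ref{lem:clause-invariant}--\ref{enum:w-s} together with the rigidity of $P(T)$ (every edge-intersection of $P(T)$ beyond $O_1,O_2$ lies strictly inside a segment of $P(T)$, so the orientation of $P(v_j)\cap P(w)$ is determined by the truth-type of $x_j$) this translates directly into exactly one true literal. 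Your stated reason for needing distinct literals (wires sharing a grid-edge) is not quite how the paper uses the assumption; distinctness is what guarantees that the three $w$-vertices of a gadget attach to three \emph{different} variable vertices, so that the one-vertical/two-horizontal count on center rays corresponds to a count over three independent truth values.
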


\begin{proof}
First suppose $b(G_\mathcal{F}) = 1$ and consider a single-bend representation of $G_\mathcal{F}$. W.l.o.g. assume, that $P(V) \cap P(T)$ is a horizontal edge-intersection. We set a variable $x_j$ true if the edge-intersection between $P(v_j)$ and the path $P(T)$ for the truth-vertex is horizontal and false if the edge-intersection between $P(v_j)$ and $P(T)$ is vertical.

Note that in every single-bend representation of a $K_{2,4}$, the path for each vertex of the larger part, in particular $P(T)$ here, has its bend in a false pie (see~\cite{Asi-09} for a reasoning). The truth-vertex $T$ is adjacent to the vertices of a triangle of $O_1$ and $O_2$. From Lemma~\ref{lem:octaeder} follows that a segment of $P(T)$ is contained in exactly one center ray of each, $O_1$ and $O_2$. As the bend of $P(T)$ is in a false pie of $K_{2,4}$, the endpoints of $P(T)$ are contained in $O_1$ and $O_2$, respectively. Hence every further edge-intersection of $P(T)$ is completely contained in a segment of $P(T)$. Consequently, each path $P(v_j)$ edge-intersects the path $P(w)$ for $w\in\{w_{abC},w_{aBc},w_{Abc}\}$ in each clause gadget with its vertical segment if and only if $x_j$ is true.

For the same reason every edge-intersection of $P(V)$ other than that with $P(T)$ is vertical. Since $V$ is adjacent to the vertices of a triangle in $O_3$ and $O_4$, the two endpoints of the vertical segment of $P(V)$ are contained in $O_3$ and $O_4$, respectively. Thus, the vertical segment of the path $P(W_{ABC})$ for each clause gadget is completely contained in the vertical segment of $P(V)$. In consequence, the horizontal segment of every such $P(W_{ABC})$ by Lemma~\ref{lem:clause-invariant}--\ref{enum:w} is contained in a horizontal center ray. Hence of the other three center rays, two are vertical and one is horizontal. Together with \mbox{Lemma~\ref{lem:clause-invariant}--\ref{enum:w-s}} this yields, that in every clause gadget the edge-intersections between exactly two of $\{P(w_{abC}),P(w_{aBc}),P(w_{Abc})\}$ with the corresponding $P(v_j)$ is horizontal and exactly one is vertical. In other words every clause contains exactly one true variable.

\begin{figure}[htb]
 \centering
 \psfrag{O1}[cc][cc]{$O_3$}
 \psfrag{O2}[cc][cc]{$O_4$}
 \psfrag{O3}[cc][cc]{$O_1$}
 \psfrag{O4}[cc][cc]{$O_2$}
 \psfrag{K24}[cc][cc]{$K_{2,4}$}
 \psfrag{t}[cc][cc]{true}
 \psfrag{f}[cc][cc]{false}
 \psfrag{t1}[cc][cc]{true}
 \psfrag{f1}[cc][cc]{false}
 \psfrag{W}[cc][cc]{$P(W_{ABC})$}
 \psfrag{T}[cc][cc]{$P(T)$}
 \psfrag{V}[cc][cc]{$P(V)$}
 \includegraphics{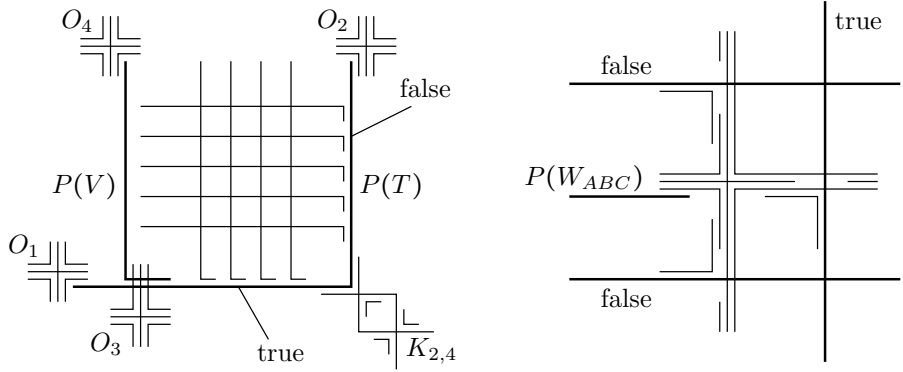}
 \caption{On the left: A single-bend representation of $G_\mathcal{F}$. The paths for the vertex $V$ and the truth-vertex $T$ are drawn bold. The clause gadgets are omitted. On the right: A single-bend representation of a clause gadget $G_\mathcal{C}$. The paths for vertices that correspond to the variables in the clause $\mathcal{C}$ and the vertex $W_{ABC}$ of the clause gadget are drawn bold.}
 \label{fig:3satrepresentation}
\end{figure}

\medskip

\noindent Now given a truth assignment fulfilling $\mathcal{F}$, we can construct a single-bend representation of $G_\mathcal{F}$. First represent all of $G_\mathcal{F}$ but the clause gadgets as on the left side in Figure~\ref{fig:3satrepresentation}. A path $P(v_j)$ is connected to the path for the truth-vertex $T$ horizontally if $x_j$ is true and vertically if $x_j$ is false.

To interlace a clause gadget $G_\mathcal{C}$, introduce a horizontal grid-line $l_h$ between the horizontal grid-lines used by the paths for the two false variables in $\mathcal{C}$. Then connect the path $P(W_{ABC})$ in $G_\mathcal{C}$ to $P(V)$ vertically with its bend on $l_h$. Furthermore introduce a vertical grid-line $l_v$ between the vertical grid-lines used by $P(V)$ and the path for the true variable in $\mathcal{C}$. Where $l_h$ and $l_v$ cross, introduce the center of the clause gadget as illustrated on the right side in Figure~\ref{fig:3satrepresentation}. Note that the clause gadget is symmetric in $A$, $B$ and $C$ and hence it can be represented with every center ray pointing into the desired direction.
\end{proof}

Martin Pergel~\cite{Per-11} announced a proof for the NP-completeness of deciding whether a graph has bend-number at most $2$. Still, there is an obvious question.

\begin{quest}
 What is the complexity of recognizing $k$-bend graphs for $k\geq 3$?
\end{quest}


\section{Open Problems}\label{sec:ope}
Most of this paper considers the extremal question of determining the maximum bend-number of a given graph class. For many graph classes this number remains unknown. The most interesting ones are:
\begin{itemize}
 \item bounded degree graphs, where we know that the bend-number is between $\lceil\frac{\Delta}{2}\rceil$ and $\Delta$ (Question~\ref{quest:Delta}),
 \item claw-free graphs, where we believe that the bend-number is unbounded (Conjecture~\ref{conj:claw}),
 \item graphs with bounded pseudo-arboricity $\pa$, where it is only known that $b(G)\leq 2\pa(G)+1$, see~\cite{Bie-10} (Question~\ref{quest:pa}),
\item graphs of bounded simple treewidth where we conjecture that $b(G)\leq 2\stw(G)-3$ (Conjecture~\ref{conj:stw}).
\item A problem which was not addressed in this paper, but seems interesting to us is determining the maximum bend-number of planar graphs. We know that this value is either $3$ or $4$, see~\cite{Hel-12}.
\end{itemize}

Looking at complete bipartite graphs, we ask for the bend-number of $K_{m,n}$ with fixed $m$ and different values of $n$. In Section~\ref{sec:bip} we give some explicit intermediate values from which lower and upper bounds can be derived. What is the exact behavior of this function? Is there a closed formula?

In Section~\ref{sec:rec} we prove NP-hardness of SINGLE-BEND-RECOGNITION. Hence computing $b(G)$ is NP-hard. But the complexity of recognizing $k$-bend graphs for $k\geq 3$ remains open.

A question of somewhat independent combinatorial flavor turned up in Section~\ref{sec:bip},. How many crossings may two grid paths with an even number of bends have, see Question~\ref{quest:cross}? We determined this value for paths with an odd number of bends, see Lemma~\ref{lem:pnt}.

\section*{Acknowledgments}
We thank Stefan Felsner and Piotr Micek for fruitful discussions and Thomas Hixon for his help with the exposition.

\bibliography{lit}
\bibliographystyle{amsplain}

\end{document}